\documentclass[11 pt, a4paper, reqno, twoside]{amsart}

%\stepcounter{section}

\usepackage{amsmath, amsfonts, amssymb, amscd, enumerate, scalefnt}

\usepackage{etex}
\usepackage{t1enc}
\usepackage[latin1]{inputenc}
\usepackage{graphicx}
\usepackage[all]{xy}
\usepackage{color}
\usepackage{slashed}
\usepackage[mathscr]{euscript}
\usepackage{enumitem}
\setlist[itemize]{noitemsep, topsep=1 pt}
\tolerance=10000
\sloppy

\newcommand\bcdot{\ensuremath{
  \mathchoice
   {\mskip\thinmuskip\lower0.2ex\hbox{\scalebox{1.6}{$\cdot$}}\mskip\thinmuskip}}
   {\mskip\thinmuskip\lower0.2ex\hbox{\scalebox{1.6}{$\cdot$}}\mskip\thinmuskip}
   {\lower0.3ex\hbox{\scalebox{1.2}{$\cdot$}}}
   {\lower0.3ex\hbox{\scalebox{1.2}{$\cdot$}}}
}
\theoremstyle{plain}
\newtheorem{theo}{Theorem}[section]

\newtheorem{prop}[theo]{Proposition}

\newtheorem{cor}[theo]{Corollary}
\theoremstyle{definition}
\newtheorem{rem}[theo]{Remark}

\newtheorem{definition}[theo]{Definition}

\theoremstyle{plain}
\newtheorem{lemma}[theo]{Lemma}
\newtheorem{theorem}[theo]{Theorem}

\theoremstyle{definition}

\renewcommand{\=}{:=}

% Abbreviations for Greek letters
\renewcommand{\a}{\alpha}
\renewcommand{\b}{\beta}
\renewcommand{\c}{\chi}
\renewcommand{\d}{\delta}
\newcommand{\e}{\epsilon}
\newcommand{\f}{\varphi}

\newcommand{\h}{\eta}

\renewcommand{\k}{\kappa}
\renewcommand{\l}{\lambda}
\newcommand{\w}{\omega}
\newcommand{\q}{\vartheta}

\newcommand{\s}{\sigma}
\newcommand{\vars}{\varsigma}
\renewcommand{\t}{\tau}

\newcommand{\x}{\xi}

\newcommand{\D}{\Delta}

\renewcommand{\L}{\Lambda}

\newcommand{\W}{\Omega}

%\font\lg=greek at 18 pt

\newcommand{\bC}{\mathbb{C}}

\newcommand{\bR}{\mathbb{R}}
\newcommand{\bZ}{\mathbb{Z}}

% Gothic letters
%\newfont{\goth}{eufm10 scaled \magstep1}

\renewcommand{\gg}{\mathfrak{g}}

\newcommand{\gl}{\mathfrak{l}}

\newcommand{\gu}{\mathfrak{u}}

\newcommand{\gX}{\mathfrak{X}}
\newcommand{\gS}{\mathfrak{S}}

\newcommand{\gV}{\mathfrak{V}}

\newcommand{\so}{\mathfrak{so}}

\newcommand\SO{\mathrm{SO}}
\newcommand\SU{\mathrm{SU}}
\newcommand\U{\mathrm{U}}
\newcommand\Spin{\mathrm{Spin}}

% Calligraphic letters

%\newfont{\mcal}{eusm10 scaled \magstep1}

\newcommand{\cC}{\mathcal{C}}

\newcommand{\cM}{\mathcal{M}}

\newcommand{\cQ}{\mathcal{Q}}
\newcommand{\cR}{\mathcal{R}}

\newcommand{\cT}{\mathcal{T}}
\newcommand{\cU}{\mathcal{U}}

%\EuScript

\newcommand{\eD}{\EuScript{D}}

\newcommand{\eL}{\EuScript{L}}

\newcommand{\eP}{\EuScript{P}}

\newcommand{\eS}{\EuScript{S}}

\newcommand{\eV}{\EuScript{V}}

%\mathscr

%Math symbols

\newcommand{\p}{\partial}

\newcommand{\ra}{\rightarrow}

\newcommand\Derc[1]{D^{{}_{{}^{#1}}}}
\newcommand\Dirac[1]{\slh{D}^{{}_{{}^{#1}}}_{\color{white} Y}\!}
\newcommand\Diracde{\slh{\partial}_{\color{white} Y}\!\!}

\newcommand{\spin}{\operatorname{spin}}

\renewcommand{\square}{\kern1pt\vbox
{\hrule height 0.6pt\hbox{\vrule width 0.6pt\hskip 3pt \vbox{\vskip
6pt}\hskip 3pt\vrule width 0.6pt}\hrule height0.6pt}\kern1pt}
\renewcommand{\=}{\  \raisebox{0.15mm}{:} {=} \ }

\DeclareMathOperator\Tr{Tr}

\DeclareMathOperator\End{End}

\DeclareMathOperator\Id{Id}
\DeclareMathOperator\Cl{\mathcal{C}\ell}

\newcommand{\Cinf}{\mathcal{C}^{\infty}}

\DeclareMathOperator\Div{\operatorname{Div}}

\renewcommand\Im{\operatorname{Im}}

\newcommand\slh[1]{\slashed #1}

\newcommand{\zero}{\operatorname{o}}
\def\<#1,#2>{\langle\,#1,\,#2\,\rangle}

\newcommand{\Aac}{\`A}

\newcommand{\Math}{{\it Mathematica\raise5 pt\hbox{$\scriptscriptstyle \circledR$}7}}
\newcommand{\beq}{\begin{equation}}
\newcommand{\eeq}{\end{equation}}
\def\<#1,#2>{\langle\,#1,\,#2\,\rangle}
\newcommand{\arr}{\begin{array}{rlll}}
\newcommand{\ea}{\end{array}}
\newcommand{\bea}{\begin{eqnarray}}
\newcommand{\eea}{\end{eqnarray}}
\newcommand{\bean}{\begin{eqnarray*}}
\newcommand{\eean}{\end{eqnarray*}}

%\renewcommand{\re}{\mathrm{Re}}

%\DeclareMathOperator{\Ad}{Ad}

%%%%%%%%%%%%%%%%%%%%%%%%%%%%%%%%%%%%%
%************************** SIDE-REMARKS *****************************
\def\sideremark#1{\ifvmode\leavevmode\fi\vadjust{%            The remark
\vbox to0pt{\hbox to 0pt{\hskip\hsize\hskip1em%               will appear only
\vbox{\hsize3cm\tiny\raggedright\pretolerance10000%          on the side
\noindent #1\hfill}\hss}\vbox to8pt{\vfil}\vss}}}%           in 3cm
%%%%%%%%%%%%%%%%%%%%%%%%%%%%%%%%%%%%

%
%\renewcommand{\theequation}{\thesection.\arabic{equation}}
%\numberwithin{equation}{section}
\newcounter{ssig}
\setcounter{ssig}{0}

\newcounter{ttig}
\setcounter{ttig}{0}

\setlength{\textwidth}{14.25cm}
% Impression : 13.25cm
% Ecriture : 14.25cm
\setlength{\textheight}{22.5cm}
% Impression : 22.5cm
% Ecriture : 17.5cm
% \voffset -1.5cm
% Impression : -1cm
% Ecriture : -1cm -1.5cm
      \hoffset -1cm
% Impression : -2cm
% Ecriture : -3cm -1cm
%\large

%%************************** SIDE-REMARKS *****************************
%\def\sideremark#1{\ifvmode\leavevmode\fi\vadjust{% The remark
%\vbox to0pt{\hbox to 0pt{\hskip\hsize\hskip1em% will appear only
%\vbox{\hsize3cm\tiny\raggedright\pretolerance10000% on the side
%\noindent #1\hfill}\hss}\vbox to8pt{\vfil}\vss}}}% in 3cm
%%

\title[Chern-Dirac bundles on non-K\"ahler Hermitian manifolds]{Chern-Dirac bundles on non-K\"ahler \\ Hermitian manifolds}

\author{Francesco Pediconi}

%\date{\today}

\subjclass[2010]{53C55, 53C27}
\keywords{Dirac operator, non-K\"ahler Hermitian manifolds, Chern connection.}

%\date{\number\year-\number\month-\number\day}
\begin{document}
\begin{abstract}
We introduce the notions of Chern-Dirac bundles and Chern-Dirac operators on Hermitian manifolds. They are analogues of classical Dirac bundles and Dirac operators, with Levi-Civita connection replaced by Chern connection. We then show that the tensor product of canonical and the anticanonical spinor bundles, called $\eV$-spinor bundle, is a bigraded Chern-Dirac bundle with spaces of harmonic sections isomorphic to the full Dolbeault cohomology class. A similar construction establishes isomorphisms between other types of harmonic sections of the $\eV$-spinor bundle and twisted cohomology.
\end{abstract}

\maketitle

%\tableofcontents

\section{Introduction} \setcounter{equation} 0

A {\it Dirac bundle} over a Riemannian manifold $(M,g)$ is a real or complex vector bundle $\pi: E \ra M$ endowed with a Riemannian or Hermitian metric $h$, a metric connection $D$ and a Clifford multiplication $c: \Cl M \ra \End(E)$, i.e. a structure of left $\Cl M$-module with respect to which the multiplication by tangent vectors is fiber-wise skew-adjoint and covariantly constant. For every such a bundle, there is a distinguished operator, called Dirac operator, which plays a central role in many areas of Differential Geometry and Theoretical Physics (see e.g. \cite{Fr, LM} for an introduction to this topic). The most notable examples of Dirac bundles are spinor bundles on the so called $\spin$ or $\spin^{\bC}$ manifolds.  \smallskip

One of the most important property of Dirac operators is the fact that they are first order, elliptic and formally self-adjoint operators, whose squares have the same principal symbol of the rough Laplacian. On the base of such properties, one may expect the existence of Hodge type theorems for Dirac operators, relating the null spaces of these operators with appropriate cohomology groups of the manifold. This expectation is however contradicted by Hitchin's results in \cite{Hi}, where it is shown that the dimensions of the null spaces of Dirac operators cannot be expressed in purely topological terms. Still, in the special case of K\"ahler geometry, there exist such strong interactions between Clifford multiplications and complex structures which give rise to some notable isomorphisms between the null space of the Dirac operators and certain cohomology groups of the manifold. More precisely, given a compact K\"ahler $2n$-manifold $(M,g,J)$, the following facts hold. \begin{itemize}
\item[i)] $M$ admits a {\it canonical} $\spin^{\bC}$ spinor bundle which is isomorphic to $\L^{0,\bcdot}(T^*M)$ and whose Dirac operator coincides with $\sqrt{2}\big(\bar{\p}+\bar{\p}^*\big)$. From this one gets that the space of harmonic spinors is isomorphic to the Dolbeault cohomology class $H^{0, \bcdot}_{\bar{\p}}(M)$ (see e.g. \cite{Fr}, \S 3.4, or \cite{LM}, Appendix D).
\item[ii)] The complex Clifford bundle $\Cl^{\,\bC}\!M \= \Cl M {\otimes}_{\bR} \bC$ of $M$ always carries a very rich algebraic structure, which has been systematically studied by Michelsohn in \cite{Mi}. There, the author determined a natural bigradation on $\Cl^{\,\bC}\!M$ and, using Dirac operators, constructed a natural elliptic cochain complex which defines the so called {\it Clifford cohomology} of the K\"ahler manifold.
\end{itemize} \smallskip

The aim of this paper is to determine analogues of (i) and (ii) in the more general setting of Hermitian geometry. This is indeed achieved by making use of the Chern connection, instead of the Levi-Civita connection. Following this idea, we first define the {\it Chern-Dirac bundles} on an Hermitian (possibly non-K\"ahler) manifold and the associated {\it Chern-Dirac operators}. They carry all the nice properties of the usual Dirac bundle and Dirac operators, respectively, and they are equal to them in case $M$ is K\"ahler. Then, on any Hermitian manifold $(M,g,J)$, we explicitly construct a distinguished Chern-Dirac bundle $\eV M$, naturally isomorphic to $\Cl^{\,\bC}\!M$, called {\it $\eV$-spinor bundle}. Then, in the same spirit of \cite{Mi}, we show that $\eV M$ is naturally bigraded and that the kernels of the Chern-Dirac operators on $\eV$-spinors are naturally isomorphic to the De Rham and Dolbeault cohomology classes of $M$. Finally, using these new tools, we obtain a spinorial characterization for the Bott-Chern, Aeppli and twisted cohomologies. We also determine explicit expressions for the squares of Chern-Dirac operators, which might be used to determine useful Bochner-type theorems on non-K\"ahler Hermitian manifolds, with appropriate conditions on curvature and torsion. Although we proceed in a similar way, our construction is very different from that given in \cite{Mi}. For the sake of clarity, we will often point out differences and similarities with Michelsohn's framework. \smallskip

The paper is structured as follows. After the first two sections, where some basic properties of Hermitian manifolds, spin groups and $\spin^{\bC}$ structures are recalled, in \S 4 we define Chern-Dirac bundles and Chern-Dirac operators and prove their main properties. In \S 5 we introduce the Chern-Dirac bundle of $\eV$-spinors and prove the main results of this paper. In the final section \S 6 applications of $\eV$-spinors in twisted cohomology are given. \medskip

\noindent{\it Acknowledgement.} We are grateful to Andrea Spiro for useful discussions on several aspects of this paper and his constant support. We also thank Ilka Agricola for helpful comments and suggestions. \bigskip

\section{Preliminaries and notation} \setcounter{equation} 0

In this section we briefly summarize some basic notations and properties of $\spin^{\bC}$ structures and spinors over Hermitian manifolds. We refer to \cite{Fr}, \S 3.4, for a more detailed treatment of these tools. However, we stress the fact that we are using the definition of Clifford algebra of \cite{LM}, based on formula \eqref{relClif}. The sign convention used in \cite{Fr} is opposite to ours and this causes differences in some formulas of this paper from those of that book. 

\subsection{Hermitian manifolds and Chern connections}
Let $(M,g,J)$ be a $2n$-dimensional Hermitian manifold, with fundamental form $\w \!\= g(J\cdot,\cdot)$. The $J$-ho\-lo\-morphic and $J$-antiholomorphic subbundles of $T^{\bC}M$ are denoted by $T^{10}M$ and $T^{01}M$, respectively. Analogously, the corresponding dual subbundles of $T^{*\bC}M$, determined by the $J$-action on covectors $(J\l)(\cdot) \= -\l(J \cdot)$, are denoted by $T^{*10}M$ and $T^{*01}M$, respectively. The bundle of $(p,q)$-forms is indicated by $\L^{p,q}(T^*M)$ and the space of its global sections by $\W^{p,q}(M)$. The decomposition $d= \partial+\bar{\partial}$ is the usual expression of the exterior differential $d$ as sum of the classical $\p$ and $\bar{\p}$ operators. \smallskip 

We denote by $\pi: \SO_g(M) \rightarrow M$ the $\SO_{2n}$-bundle of oriented $g$-orthonormal frames and by $\U_{\!{}_{g,J}\!}(M) \subset \SO_g(M)$ the $\U_n$-subbundle of $(g,J)$-unitary frames, that is of the frames $(e_j)$ satisfying the conditions $g(e_j,e_{\ell})=\d_{j\ell}$ and $Je_{2j-1}=e_{2j}$. Further, for each unitary frame $(e_j) \subset T_xM$, $x \in M$, we denote by \beq \Big(\e_s \= \frac{e_{2s-1}-ie_{2s}}{\sqrt2}, \, \overline{\e}_s \= \frac{e_{2s-1}+ie_{2s}}{\sqrt2}\Big)_{1 \leq s \leq n} \ , \label{vettu} \eeq the complex frame given by the normalized holomorphic and anti-holomorphic parts of the vectors $e_s$. We call it {\it associated normalized complex frame}. \smallskip

Note that, given an unitary frame $(e_j)$ for $T_xM$, the K\"ahler form $\w_x= \w_{{}_{g,J}}|_x$ is equal to $$\w_x = e^1 \!\wedge\! e^2 + {\dots} + e^{2n-1} \!\wedge\! e^{2n} =i\big(\e^1 \!\wedge\! \overline{\e}^1 + {\dots} + \e^n \!\wedge\! \overline{\e}^n\big) \ .$$

The Levi-Civita connection of $(M,g)$ is the torsion-free $\so_{2n}$-valued $1$-form $\w^{{}^{{}_{LC}}}$ on $\SO_g(M)$ and its corresponding covariant derivative on vector fields of $M$ is denoted by $\Derc{LC}_{\color{white} X}$. Similarly, the Chern connection of $(M,g,J)$ is the $\gu_n$-valued connection form $\w^{{}^{{}_{\cC}}}$ on $\U_{\!{}_{g,J}\!}(M)$, whose associated covariant derivative $\Derc{\cC}_{\color{white} X}$\!\! on vector fields of $M$ has torsion satisfying $T(J\cdot,\cdot)=T(\cdot,J\cdot)$. The covariant derivatives of the Levi-Civita and Chern connections are related by \beq \Derc{\cC}_XY=\Derc{LC}_XY+S(X,Y) \,\, , \label{contS} \eeq where $S$ is the uniquely determined {\it contorsion tensor of the Chern connection}. It is well known that the torsion and the contorsion of Chern connection are given by \beq \begin{gathered}
S(X,Y,Z)=-\frac12d\w(JX,Y,Z) \,\, ,\\
T(X,Y,Z)=-\frac12\big(d\w(JX,Y,Z)+d\w(X,JY,Z)\big) \,\, .
\end{gathered} \label{TS2} \eeq and they are related by \beq \begin{gathered}
T(X,Y)=S(X,Y)-S(Y,X) \,\, , \\
2S(X,Y,Z) = T(X,Y,Z)-T(Y,Z,X)+T(Z,X,Y) \,\, ,
\end{gathered} \label{TS} \eeq where $S(X,Y,Z) \= g(S(X,Y),Z)$ and $T(X,Y,Z) \= g(T(X,Y),Z)$. \smallskip

Finally, we recall that the {\it Lee form of $(M,g,J)$} is the 1-form $$\q(X) \= \Tr\!\big(T(X,\cdot)\big) = \sum_j T(X, e_j,e_j) \, ,$$ where $(e_j)$ is an arbitrary choice of a (local) unitary frame field on $M$. One can directly check that the fundamental form $\w$ and the Lee form $\q$ are related by $$\q=-Jd^*\w \, ,$$ where $d^*$ is the adjoint of $d$ with respect to $g$.

\subsection{Complex spin representations and $\Spin_n^{\, \bC}$}
We recall that the Clifford algebra $\Cl_n$ is the real associative algebra with unit generated by $n$ elements $(e_j)$ satisfying \beq e_j \!\cdot\! e_k + e_k \!\cdot\! e_j=-2\d_{jk} \,\, , \quad \,\,\,\,1 \leq j,k \leq n \, . \label{relClif} \eeq As vector space, $\Cl_n$ can be identified with $\L^{\bcdot}(\bR^n)$ in such a way that \beq v \!\cdot\! w = v \wedge w - v \lrcorner w \quad \text{ for every } \ v \in \bR^n \, , \; w \in \L^{\bcdot}(\bR^n) \ . \label{isoGrCl} \eeq The spin group is the subset $\Spin_n \= \big\{v_1 \!\cdot\! {\dots} \!\cdot\! v_{2r} : v_j \in \bR^n, \, ||v_j||=1\big\}$ equipped with the multiplication of $\Cl_n$. If $n \geq 3$, it is simply connected and it is the universal covering of $\SO_n$ by means of the map $$\t_n: \Spin_n \ra \SO_n \, , \quad \,\,\,\,\,\,\, \t_n(v_1\!\cdot\!\dots\!\cdot\!v_{2r}) \= \operatorname{refl}_{v_1} \circ {\dots} \circ \operatorname{refl}_{v_{2r}} \,\, ,$$ where $\operatorname{refl}_v$ is the reflection of $\bR^n$ with respect to $v^{\perp}$. We denote by $\eS_n$ the space of complex $n$-spinors, by $\bcdot: \Cl_n \otimes \eS_n \ra \eS_n$ the standard Clifford multiplication and by $\k_n: \Spin_n \ra \SU(\eS_n)$ the {\it spin representation of $\Spin_n$}, where we consider $\eS_n$ endowed with a positive definite Hermitian scalar product which is invariant under Clifford multiplications by vectors $v \in \bR^n \subset \Cl_n$.

Let now $\Cl_n^{\, \bC} = \Cl_n \!\otimes_{\bR} \bC$ be the complex Clifford algebra. In the even dimensional case, $\Cl_{2m}^{\, \bC}$ is generated by complex vectors $(\e_j, \overline{\e}_j)$, related with the generators $(e_j)$ by the formula \eqref{vettu}, which verify \beq \e_r \!\cdot\! \e_s + \e_s \!\cdot\! \e_r = \overline{\e}_r \!\cdot\! \overline{\e}_s + \overline{\e}_s \!\cdot\! \overline{\e}_r = 0 \, , \quad \e_r \!\cdot\! \overline{\e}_s + \overline{\e}_s \!\cdot\! \e_r = -2\d_{rs} \, .
\label{anticomeps} \eeq 

Finally, we recall that the {\it $\Spin^{\, \bC}$-group} is the Lie group $\Spin_n^{\, \bC} \= \Spin_n \times_{{}_{\bZ_2}} S^1$. It is a $2$-fold covering of $\SO_n \times S^1$ by means of the map $(\t_n, \varrho_n) : \Spin_n^{\, \bC} \ra \SO_n \times S^1$, where \begin{align*}
&\t_n: \Spin_n^{\, \bC} \ra \SO_n \,\, , \hskip 0.5cm \t_n([g,z]) \= \t_n(g) \,\, , \\
&\varrho_n: \Spin_n^{\, \bC} \ra S^1 \,\, , \hskip 0.5cm \varrho_n([g,z]) \= z^2 \,\, ,
\end{align*} and admits a representation on $\eS_n$, denoted again by $\k_n$, defined by \beq \k_n : \Spin_n^{\, \bC} \ra \SU(\eS_n) \ , \quad \k_n([g,z]) \= z \k_n(g) \ . \label{spincrep} \eeq

\subsection{$\spin^{\bC}$ structures on Hermitian manifolds}

Let $(M,g)$ be an oriented Riemannian manifold with oriented orthonormal frame bundle $\pi: \SO_g(M) \ra M$. A {\it $\spin^{\bC}$ structure on $(M,g)$} is a $\Spin^{\, \bC}_n$-bundle $\hat{\pi}: \eP \rightarrow M$ together with an equivariant bundle morphism $\varpi: \eP \ra \SO_g(M)$ such that $\hat{\pi}=\pi \circ \varpi$. Given a $\spin^{\bC}$ structure $\eP$, the corresponding {\it spinor bundle} is the associated bundle $\eS M \= \eP \times_{\k_n} \eS_n$. The space of its global sections is indicated with $\gS(M)$. \smallskip

Most, but not all, orientable Riemannian manifolds admit a $\spin^{\bC}$ structure (see \cite{LM}, p. 393). A crucial property of the subclass of Hermitian manifolds is that all of them have two very natural $\spin^{\bC}$ structures. In fact, the homomorphisms \beq f_{\pm}: \U_n \ra \SO_{2n} \times S^1 \ ,\quad f_{\pm}(A) \= \big(\imath_{{}_{\U_n}}(A),\, \det(A)^{\pm 1}\big) \ , \label{fpiumeno} \eeq where $\imath_{{}_{\U_n}}: \U_n \hookrightarrow \SO_{2n}$ is the canonical immersion of $\U_n$ into $\SO_{2n}$, can be uniquely lifted to two group homomorphisms $F_{\pm}: \U_n \ra \Spin_{2n}^{\, \bC}$ in such a way that the diagram \vspace {-5pt} {\scalefont{0.8} \begin{displaymath}\xymatrix{ 
 & & \Spin_{2n}^{\, \bC} \ar[d]^-{(\t_{2n}, \varrho_{2n})} \\
\U_n \ar[rr]^-{f_{\pm}} \ar[urr]^-{F_{\pm}} & & \SO_{2n}\times S^1
}\end{displaymath}}\vspace {-5pt}commutes.

\begin{definition} Let $(M,g,J)$ be an Hermitian $2n$-manifold. Its {\it canonical $\spin^{\bC}$ structure} is the bundle $\eP^{{}^\uparrow}\!(M) \= \U_{\!{}_{g,J}\!}(M) \times_{{}_{F_+}} \Spin_{2n}^{\, \bC}$. Similarly, its {\it anticanonical $\spin^{\bC}$ structure} is $\eP^{{}^\downarrow}\!(M) \= \U_{\!{}_{g,J}\!}(M) \times_{{}_{F_-}} \Spin_{2n}^{\, \bC}$. \label{canantican}\end{definition}

If $\eS M$ is a spinor bundle on $(M,g,J)$ associated with a $\spin^{\bC}$ structure, it is known that the K\"ahler form $\w=\w_{{}_{g,J}}$ acts on $\eS M$ by Clifford multiplication as a bundle endomorphism. Its eigenvalues are the imaginary numbers $(2k-n)i$, $0 \leq k \leq n$, and, in each fibre, the corresponding eigenspaces $$\eS_x^kM \= \{\psi \in \eS_xM : \w_x \!\bcdot\! \psi=(2k-n)i\,\psi \} \ , \quad 0 \leq k \leq n \ , \quad x \in M$$ have dimension $\binom{n}{k}$. One can also directly check that \beq \begin{split} \eS^0_xM=\left\{\psi \in \eS_xM : \overline{v} \!\bcdot\! \psi=0 \text{ for every } \overline{v} \in T_x^{01}M\right\} \ , \\ \eS^n_xM=\left\{\psi \in \eS_xM : v \!\bcdot\! \psi=0 \text{ for every } v \in T_x^{10}M\right\} \ . \end{split} \label{S0n} \eeq Furthermore, it is known that there exist Hermitian metrics on $\eS M$ invariant under Clifford multiplication by tangent vectors. With respect to one of such metrics, for every $0 \leq k \leq n$, the maps \beq \begin{gathered}
\a^k: \L^{0,k}(T^*M) \!\otimes\! \eS^0M \ra \eS^kM \ , \quad \a^k(\overline{\mu} \!\otimes\! \psi) \= \frac{1}{2^{\frac{k}{2}}} \overline{\mu} \!\bcdot\! \psi \,\, , \\
\b^k: \L^{k,0}(T^*M) \!\otimes\! \eS^nM \ra \eS^{n-k}M \ , \quad \b^k(\nu \!\otimes\! \psi) \= \frac{1}{2^{\frac{k}{2}}} \nu \!\bcdot\! \psi 
\end{gathered} \label{ab} \eeq are $\bC$-linear isometries and their sums give rise to global isometries $$\a: \L^{0,\bcdot}(T^*M) \otimes \eS^0 M \overset{\simeq}{\longrightarrow} \eS M \ , \quad \b: \L^{\bcdot,0}(T^*M) \otimes \eS^n M \overset{\simeq}{\longrightarrow} \eS M \ .$$

\section{Chern-Dirac bundles} \setcounter{equation} 0

\subsection{Chern-Dirac bundles and partial Chern-Dirac operators}

Given an Hermitian $2n$-manifold $(M,g,J)$, we can always consider the {\it complex Clifford bundle} over $M$, defined by $\Cl^{\,\bC}\!M \= \U_{\!{}_{g,J}\!}(M) \times_{{}_{\U_n}} \Cl_{2n}^{\, \bC}$, where the group $\U_n$ acts on $\Cl_{2n}^{\, \bC}$ in the standard way. It is the complex analogue of the (real) Clifford bundles considered in \cite{LM}, \S I.3. In full analogy with the notion of (real) {\it Dirac bundle} (see e.g. \cite{LM}, \S I.5), it is convenient to introduce the following

\begin{definition} A {\it Chern-Dirac bundle} over $(M,g,J)$ is a complex vector bundle $\pi: E \ra M$ endowed with an Hermitian metric $h$, a covariant derivative $D$ which preserves the metric and a structure of complex left $\Cl^{\,\bC}\!M$-module $c: \Cl^{\,\bC}\!M \ra \gg\gl(E)$, satisfying the conditions: \begin{itemize}
\item[i)] for every $v \in T^{\,\bC}M$, $\s_1, \s_2 \in E$ \beq h(c(v)\s_1 , \s_2)+h(\s_1 , c(\overline{v})\s_2)=0 \, ; \label{skewH}\eeq
\item[ii)] for every $X \in \gX(M)$, for every sections $w$ of $\Cl^{\,\bC}\!M$ and $\s$ of $E$ \beq D_X(c(w)\s)=c\big(\Derc{\cC}_Xw\big)\s+c(w)D_X\s \, . \label{derE}\eeq
\end{itemize} \label{CDbundles} \end{definition}

Note that, if $M$ is K\"ahler, then $\Derc{\cC}_{\color{white} X}\!\! = \Derc{LC}_{\color{white} X}\!\!$ and, consequently, any Chern-Dirac bundle over $M$ is a Dirac bundle in the usual sense. \smallskip

The main results of this paper are based on the following differential operators on Chern-Dirac bundles, which are natural analogues of Dirac operators.

\begin{definition} Let $E$ be a Chern-Dirac bundle over $(M,g,J)$. The {\it partial Chern-Dirac operators} on the section of $E$ are the maps $\Diracde\!'$ and $\Diracde\!''$ that transform any section $\s$ of $E$ into the sections defined for every $x \in M$ by \beq \begin{gathered}
\Diracde\!'\s\big|_x \= \sum_{j=1}^n c(\overline{\e}_j)D_{\e_j}\s-\frac12\sum_{r<s}c(\overline{\e}_r \!\cdot\! \overline{\e}_s \!\cdot\! T_{rs})\s_x \,\, ,\\
\Diracde\!''\s\big|_x \= \sum_{j=1}^n c(\e_j)D_{\overline{\e}_j}\s-\frac12\sum_{r<s}c(\e_r \!\cdot\! \e_s \!\cdot\! T_{\bar{r}\bar{s}})\s_x  \,\, ,
\end{gathered} \label{pCD} \eeq where $(\e_j, \overline{\e}_j)$ is the normalized complex basis \eqref{vettu} associated with a unitary basis $(e_j) \subset T_xM$, $T$ is the torsion of $\Derc{\cC}_{\color{white} X}$\!\! and $T_{rs}\= T(\e_r, \e_s)$, $T_{\bar{r}\bar{s}}\= T(\overline{\e}_r, \overline{\e}_s)$. The sum of these operators gives what we call {\it Chern-Dirac operator} \beq \slh{D} \= \Diracde\!' + \Diracde\!'' \, . \label{CD} \eeq \end{definition}

One can directly check that the formulas \eqref{pCD} define two global operators on the whole manifold, i.e. they are coordinate invariant. Indeed, if $(e'_j)$ and $(e_k)$ are unitary frames at $x \in M$ with $e'_j=e_kA^k_j$, then the associated normalized complex frames are related by \beq \e'_j=\e_k \a^k_j \,\, , \quad \overline{\e}'_j=\overline{\e}_k \overline{\a}^k_j \,\, , \quad \,\, \text{ with } \,\, \a^k_j \= A^{2k-1}_{2j-1}-iA^{2k-1}_{2j} \,\, .\eeq Since the complex coefficients $\a^k_j$ verify $\sum_j \a^k_j\,\overline{\a}^m_j= \d^{km}$, then \begin{align*}
\sum_{j=1}^n c(\overline{\e}'_j)D_{\e'_j}\s-&\frac12\sum_{r<s}c(\overline{\e}'_r \!\cdot\! \overline{\e}'_s \!\cdot\! T(\e'_r, \e'_s))\s_x = \\
&=\sum_{j=1}^n \overline{\a}^k_j\a^h_jc(\overline{\e}_k)D_{\e_h}\s-\frac12\sum_{r<s}\overline{\a}^{\ell}_r\a^p_r\overline{\a}^m_s\a^q_s \,c(\overline{\e}_{\ell} \!\cdot\! \overline{\e}_m \!\cdot\! T(\e_p, \e_q))\s_x \\
&=\sum_{k=1}^n c(\overline{\e}_k)D_{\e_k}\s-\frac12\sum_{\ell<m}c(\overline{\e}_{\ell} \!\cdot\! \overline{\e}_m \!\cdot\! T(\e_{\ell}, \e_m))\s_x
\end{align*} and analogously for $\Diracde\!''$. \smallskip

It follows from the definition that the Chern-Dirac operator $\slh{D}$ is a first-order elliptic operator. Moreover, it turns out that the operators \eqref{pCD} are formal adjoints of one another and, consequently, $\slh{D}$ is formal self-adjoint. In fact

\begin{prop} Let $E$ be a Chern-Dirac bundle over $M$ and $\s_1$, $\s_2$ be two sections of $E$. Then \beq h(\Diracde\!'\s_1,\s_2) - h(\s_1,\Diracde\!''\s_2) = \Div(V^{\s_1,\s_2}) \ , \label{diracdiv} \eeq where $V^{\s_1,\s_2}$ is the unique complex vector field that satisfies $$g(V^{\s_1,\s_2},X)=-h(\s_1,c(X^{10})\s_2) \quad \text{ for every } X \in \gX(M) \, .$$ Consequently, if $\s_1$, $\s_2$ are compactly supported, then $$\int_Mh(\Diracde\!'\s_1,\s_2) \,d\!\operatorname{vol}_g = \int_Mh(\s_1,\Diracde\!''\s_2) \,d\!\operatorname{vol}_g \,\, .$$ \label{adjoint} \end{prop}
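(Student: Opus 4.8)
The plan is to establish the pointwise divergence identity \eqref{diracdiv} at an arbitrary $x\in M$ by working in a conveniently chosen local frame, and then deduce the integral statement by Stokes' theorem (the divergence theorem). First I would fix $x$ and choose a local unitary frame field $(e_j)$ on a neighbourhood of $x$ which is \emph{Chern-normal} at $x$, i.e.\ such that the Chern connection form vanishes at $x$: $\Derc{\cC}_{e_k}e_j\big|_x = 0$. Such a frame exists by the standard construction (parallel transport along radial geodesics, or simply choosing a unitary frame and applying a $\U_n$-valued gauge that kills the connection one-form at the point). Passing to the associated normalized complex frame $(\e_j,\overline{\e}_j)$ from \eqref{vettu}, one then has $\Derc{\cC}_X\e_j\big|_x=\Derc{\cC}_X\overline{\e}_j\big|_x=0$ for all $X$, and consequently, via the module compatibility \eqref{derE}, $D_X\big(c(\e_j)\s\big)\big|_x = c(\e_j)D_X\s\big|_x$ and likewise for $\overline{\e}_j$.

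Next I would compute $\Div(V^{\s_1,\s_2})$ at $x$ in this frame. Recall $\Div(V)=\sum_j g(\Derc{\cC}_{e_j}V,e_j) + (\text{torsion correction})$ — more precisely, since the Chern connection is metric but has torsion, $\operatorname{div}$ with respect to the Levi-Civita connection differs from the Chern divergence by the Lee form $\q$ contracted with $V$; it will be cleanest to write $\Div V = \sum_j g(\Derc{LC}_{e_j}V,e_j)$ and then use \eqref{contS} to re-express everything in terms of $\Derc{\cC}$ and the contorsion $S$. In complex terms, $g(V,X)=-h(\s_1,c(X^{10})\s_2)$ means $V = V^{10}+V^{01}$ is determined by $g(V,\overline{\e}_j) = -h(\s_1,c(\e_j)\s_2)$ and $g(V,\e_j)=0$, i.e.\ $V^{10} = \sum_j g(V,\overline{\e}_j)\,\e_j \cdot(\text{normalization})$ — one should track the factors of $2$ coming from \eqref{anticomeps} carefully, but this is routine. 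Differentiating $g(V,\cdot)$ with the Chern connection (which is trivial at $x$) and using \eqref{derE} plus the skew-adjointness \eqref{skewH}, the $\e_j\overline{\e}_j$-terms reassemble into $\sum_j h(c(\overline{\e}_j)D_{\e_j}\s_1,\s_2) - \sum_j h(\s_1,c(\e_j)D_{\overline{\e}_j}\s_2)$, which are exactly the "Levi-Civita-looking" first sums in the two partial Chern-Dirac operators in \eqref{pCD}.

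The remaining bookkeeping is to show that the torsion-correction terms match the $T_{rs}$ and $T_{\bar r\bar s}$ sums appearing in the definition \eqref{pCD} of $\Diracde\!'$ and $\Diracde\!''$. This is where the precise definition of $\Div$ (Levi-Civita vs.\ Chern) and the contorsion formulas \eqref{TS2}--\eqref{TS} enter: the difference $\Derc{LC}_{e_j}V - \Derc{\cC}_{e_j}V = -S(e_j,V)$, traced over $j$, produces a term involving $S$ contracted twice, which by \eqref{TS} converts into a combination of $T(\e_r,\e_s)$ and $T(\overline{\e}_r,\overline{\e}_s)$ paired through Clifford multiplication; one checks it is precisely $\tfrac12\sum_{r<s}\big(h(c(\overline{\e}_r\!\cdot\!\overline{\e}_s\!\cdot\!T_{rs})\s_1,\s_2) - h(\s_1,c(\e_r\!\cdot\!\e_s\!\cdot\!T_{\bar r\bar s})\s_2)\big)$, using \eqref{skewH} to move the Clifford factors across the metric $h$ and the anticommutation relations \eqref{anticomeps} to handle the sign. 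Once the pointwise identity \eqref{diracdiv} holds, the integral statement follows immediately: for compactly supported $\s_1,\s_2$, the vector field $V^{\s_1,\s_2}$ is compactly supported, so $\int_M \Div(V^{\s_1,\s_2})\,d\!\vol_g = 0$ by the divergence theorem, giving $\int_M h(\Diracde\!'\s_1,\s_2)\,d\!\vol_g = \int_M h(\s_1,\Diracde\!''\s_2)\,d\!\vol_g$.

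The main obstacle is purely computational rather than conceptual: correctly tracking the normalization constants (the $\sqrt2$'s from \eqref{vettu}, the factor $-2\d_{rs}$ from \eqref{anticomeps}, the $\tfrac12$'s in \eqref{pCD}) and, crucially, pinning down which connection the "$\Div$" in \eqref{diracdiv} refers to, since the torsion of the Chern connection makes the Chern-divergence and the metric (Levi-Civita) divergence disagree by $\iota_V\q$ — it is exactly this discrepancy, reorganized via the contorsion identities \eqref{TS2}--\eqref{TS}, that must conspire to produce the torsion-dependent correction terms in the definition of the partial Chern-Dirac operators. Getting that matching to come out with the right signs and coefficients is the delicate point; everything else is the standard "normal frame + integrate a divergence" argument familiar from the classical proof that $\bar\p$ and $\bar\p^*$ are formal adjoints.
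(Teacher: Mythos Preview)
Your strategy is sound and close to the paper's, but organized from the other end: you propose to compute $\Div(V)$ and match it against the left side of \eqref{diracdiv}, whereas the paper starts from $h(\Diracde\!'\s_1,\s_2)$ and manipulates it into $\Div(V)+h(\s_1,\Diracde\!''\s_2)$. The paper does \emph{not} pass to a Chern-normal frame; it works in an arbitrary unitary frame and instead invokes the identity $\sum_j \Derc{\cC}_{\overline{\e}_j}\e_j = \sum_j\big(-\Div(\overline{\e}_j)+\q(\overline{\e}_j)\big)\e_j$, which plays exactly the role your normal-frame simplification would. Both routes ultimately hinge on the Lee form $\q$ mediating between the Chern and Levi-Civita pictures.

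Two corrections are worth flagging. First, a small slip: since $(\overline{\e}_j)^{10}=0$, the defining relation gives $g(V,\overline{\e}_j)=0$ and $g(V,\e_j)=-h(\s_1,c(\e_j)\s_2)$, so $V\in T^{01}M$; you have these swapped. Second, and more substantively, your description of the torsion matching inverts the mechanism. The contorsion correction $\Div(V)-\sum_j g(\Derc{\cC}_{e_j}V,e_j)=-\sum_j S(e_j,V,e_j)$ reduces via \eqref{TS} to the \emph{scalar} $\q(V)$, not to a Clifford product. What must then be checked is that the zeroth-order Clifford terms contributed by $\Diracde\!'$ and $\Diracde\!''$, after moving factors across $h$ with \eqref{skewH}, \emph{collapse} from triple Clifford products down to this same scalar $\q(V)$; the relevant identity (since $T_{\bar r\bar s}\in T^{01}M$) is $\e_r\!\cdot\!\e_s\!\cdot\!\overline{\e}_m+\overline{\e}_m\!\cdot\!\e_s\!\cdot\!\e_r=2\d_{rm}\e_s-2\d_{sm}\e_r$, which after tracing produces exactly $\sum_j\q(\overline{\e}_j)c(\e_j)$. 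So the match is scalar-to-scalar, with the Clifford structure disappearing on the Dirac side rather than appearing on the divergence side. Once you reorient this step, your argument goes through.
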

\begin{proof} Let $(e_j)$ be a unitary frame field defined on some open subset $\cU \subset M$ and $\e_j$, $\overline{\e}_j$ the associated normalized complex vector fields defined in \eqref{vettu}. One can directly check that the vector field $V^{\s_1,\s_2}$ takes values in $T^{01}M$. By the properties of Levi-Civita connection, this implies that \begin{align}
\Div(V^{\s_1,\s_2}) &= \sum_j\Big(g(\Derc{LC}_{\e_j}V^{\s_1,\s_2}, \overline{\e}_j)+g(\Derc{LC}_{\overline{\e}_j}V^{\s_1,\s_2}, \e_j)\Big) \nonumber \\
&= \sum_j\!\Big({-}g(V^{\s_1,\s_2}, \Derc{LC}_{\e_j}\overline{\e}_j){+}\overline{\e}_j\big(g(V^{\s_1,\s_2},\e_j)\big){-}g(V^{\s_1,\s_2}, \Derc{LC}_{\overline{\e}_j}\e_j)\Big)  \label{divV} \\
&= \sum_j\Big(-\overline{\e}_j\big(h(\s_1,c(\e_j)\s_2)\big)-\Div(\overline{\e}_j) h(\s_1,c(\e_j)\s_2)\Big) \, . \nonumber
\end{align} On the other hand, we also have that \beq \sum_j \Derc{\cC}_{\overline{\e}_j}\e_j = \sum_j\big({-}\Div(\overline{\e}_j)+\q(\overline{\e}_j)\big)\e_j \, . \label{Dee} \eeq Using \eqref{divV} and \eqref{Dee} we obtain that \begin{align*}
h(\Diracde\!'\s_1,\s_2) &= \sum_j h(c(\overline{\e}_j)D_{\e_j}\s_1,\s_2)-\frac12\sum_{r<s}h(c(\overline{\e}_r)c(\overline{\e}_s)c(T_{rs})\s_1, \s_2)  \\
&= \sum_j -h(D_{\e_j}\s_1, c(\e_j)\s_2)+\frac12\sum_{r<s}h(\s_1, c(T_{\bar{r}\bar{s}})c(\e_s)c(\e_r)\s_1, \s_2) \\
&= \sum_j \Big(-\overline{\e}_j\big(h(\s_1, c(\e_j)\s_2)\big)+h(\s_1, D_{\overline{\e}_j}(c(\e_j)\s_2))\Big)+ \\
& \phantom{aaaaaaaaaaaaaaaaaaaaaaaaaaaaai}+\frac12\sum_{r<s}h(\s_1, c(T_{\bar{r}\bar{s}})c(\e_s)c(\e_r)\s_1, \s_2) \\
&= \sum_j \Big(-\overline{\e}_j\big(h(\s_1, c(\e_j)\s_2)\big)+h(\s_1,c\big(\Derc{\cC}_{\overline{\e}_j}\e_j\big)\s_2)+h(\s_1,c(\e_j)D_{\overline{\e}_j}\s_2)\Big)+ \\
& \phantom{aaaaaaaaaaaaaaaaaaaaaaaaaaaaai}+\frac12\sum_{r<s}h(\s_1, c(T_{\bar{r}\bar{s}})c(\e_s)c(\e_r)\s_1, \s_2) \\
&\overset{\eqref{Dee}}{=} \!\!\sum_j \!\!\Big(\!{-}\overline{\e}_j\!\big(h(\s_1, c(\e_j)\s_2)\big){-}\Div(\overline{\e}_j)h(\s_1,c(\e_j)\s_2){+}\q(\overline{\e}_j)h(\s_1,c(\e_j)\s_2){+} \\
& \phantom{aaaaaaaaaaaii}+h(\s_1,c(\e_j)D_{\overline{\e}_j}\s_2)\Big)+\frac12\sum_{r<s}h(\s_1, c(T_{\bar{r}\bar{s}})c(\e_s)c(\e_r)\s_1, \s_2) \\
&\overset{\eqref{divV}}{=}\Div(V^{\s_1,\s_2}) +\sum_j h(\s_1,c(\e_j)D_{\overline{\e}_j}\s_2)+\sum_j\q(\overline{\e}_j)h(\s_1,c(\e_j)\s_2)+ \\
& \phantom{aaaaaaaaaaaaaaaaaaaaaaaaaaaaai}+\frac12\sum_{r<s}h(\s_1, c(T_{\bar{r}\bar{s}})c(\e_s)c(\e_r)\s_1, \s_2) \\
&=\Div(V^{\s_1,\s_2}) {+}\sum_j h(\s_1,c(\e_j)D_{\overline{\e}_j}\s_2){-}\frac12 \sum_{r<s}h(\s_1, c(\e_r)c(\e_s)c(T_{\bar{r}\bar{s}})\s_2) \\
&=\Div(V^{\s_1,\s_2}) +h(\s_1,\Diracde\!''\s_2) \, ,
\end{align*} so that \eqref{diracdiv} holds. The last assertion follows from Stokes' Theorem. \end{proof}

\subsection{Bochner-type formulas for Chern-Dirac operators}

Let $E$ be a Chern-Dirac bundle over an Hermitian $2n$-manifold $(M,g,J)$.
We now determine Bochner-type formulas for the squares of $\Diracde\!'$, $\Diracde\!''$ and of the Chern-Dirac operator $\slh{D}$. For this, we have to introduce a few operators on sections of $E$, determined by the curvature and the torsion of Chern connection. \smallskip

First, we consider the action of the curvature $R$ of $D$ on sections $\s$ of $E$ $$R_{XY}\s \= D_X D_Y \s - D_Y D_X \s - D_{[X,Y]}\s \quad \text{ for every } X,Y \in \gX(M) \,\, .$$ Second, for each section $\s$, we define \beq \begin{gathered}
\cR^{2,0}\s\big|_x \= \sum_{j<k}c(\overline{\e}_j \!\cdot\! \overline{\e}_k)R_{\e_j \e_k}\s \, , \,\quad\, \cR^{0,2}\s\big|_x \= \sum_{j<k}c(\e_j \!\cdot\! \e_k)R_{\overline{\e}_j \overline{\e}_k}\s \, , \\
\cR^{1,1}\s\big|_x \= \sum_{j \neq k}c(\overline{\e}_j \!\cdot\! \e_k)R_{\e_j \overline{\e}_k}\s \, , \quad\, \cR\s \= \cR^{2,0}\s +\cR^{1,1}\s +\cR^{0,2}\s \,\, ,
\end{gathered} \label{cR} \eeq \beq \begin{gathered}
\cT_1\s\big|_x \= \sum_{\substack{j<k \\ r<s}} c\big(\e_j \!\cdot\! \e_k \!\cdot\! \overline{\e}_r \!\cdot\! \overline{\e}_s \!\cdot\! T_{\bar{j}\bar{k}} \!\cdot\! T_{rs} + \overline{\e}_j \!\cdot\! \overline{\e}_k \!\cdot\! \e_r \!\cdot\! \e_s \!\cdot\! T_{jk} \!\cdot\! T_{\bar{r}\bar{s}}\big)\s \,\, , \label{cT}\\
\cT_2\s\big|_x \= \sum_{j \neq k} c\big(\e_j \!\cdot\! (\Derc{\cC}_{\e_k}T)_{\bar{j}\bar{k}}+\overline{\e}_j \!\cdot\! (\Derc{\cC}_{\overline{\e}_k}T)_{jk}\big)\s \,\, , \end{gathered} \eeq where $(\e_j, \overline{\e}_j)$ is the usual normalized complex frame \eqref{vettu} determined by a unitary frame $(e_j)$ for $T_xM$. Third, we define as $\cQ$ the first order differential operator on sections of $E$ by \beq \cQ\s\big|_x \= \sum_{j \neq k} \Big(c(\e_k \!\cdot\! T_{\bar{j}\bar{k}})D_{\e_j}\s+c(\overline{\e}_k \!\cdot\! T_{jk})D_{\overline{\e}_j}\s\Big) \,\, . \label{cQ}\eeq A first Bochner-type formula is the following.

\begin{theo} On each Chern-Dirac bundle $E$ over $M$, we have \beq (\Diracde\!')^2= \cR^{2,0} \, , \quad (\Diracde\!'')^2= \cR^{0,2} \,\, . \label{Rsquare} \eeq \label{squarepCD} \end{theo}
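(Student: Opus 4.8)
The plan is to compute $(\Diracde\!')^2$ directly from the definition \eqref{pCD} by expanding the composition term-by-term, working at a point $x\in M$ with a unitary frame $(e_j)$ chosen so that the Chern connection coefficients $\Derc{\cC}_\cdot\e_j$ and $\Derc{\cC}_\cdot\overline\e_j$ vanish at $x$ (a Chern-normal frame). This is legitimate because $\Diracde\!'$ is coordinate-invariant, as verified just before the statement, and because $(\Diracde\!')^2$ is a tensorial (zeroth-order) operator in the sense that the second-order and first-order terms in $D$ must cancel — so it suffices to extract the purely algebraic curvature contribution at $x$.

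First I would write $\Diracde\!'\s = \sum_j c(\overline\e_j)D_{\e_j}\s - \tfrac12\sum_{r<s}c(\overline\e_r\!\cdot\!\overline\e_s\!\cdot\!T_{rs})\s$, call the two pieces $A\s$ and $B\s$, and expand $(\Diracde\!')^2 = A^2 + AB + BA + B^2$. For $A^2$: apply $A$ again, using \eqref{derE} to commute $c(\overline\e_k)$ past $D_{\e_j}$, picking up a term $c(\Derc{\cC}_{\e_j}\overline\e_k)$ which vanishes at $x$ in the normal frame; so $A^2\s|_x = \sum_{j,k} c(\overline\e_k)c(\overline\e_j)D_{\e_k}D_{\e_j}\s$. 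Split the double sum into symmetric and antisymmetric parts in $(j,k)$. The symmetric part pairs $c(\overline\e_k)c(\overline\e_j)+c(\overline\e_j)c(\overline\e_k)=0$ by the anticommutation relations \eqref{anticomeps} (the $\overline\e$'s square to zero and anticommute among themselves). The antisymmetric part gives $\sum_{j<k}c(\overline\e_k)c(\overline\e_j)\bigl(D_{\e_k}D_{\e_j}-D_{\e_j}D_{\e_k}\bigr)\s = \sum_{j<k}c(\overline\e_k)c(\overline\e_j)\bigl(R_{\e_k\e_j}\s + D_{[\e_k,\e_j]}\s\bigr)$. The curvature piece is exactly $-\cR^{2,0}\s$ up to sign bookkeeping (note $c(\overline\e_k\!\cdot\!\overline\e_j) = -c(\overline\e_j\!\cdot\!\overline\e_k)$ and $R_{\e_k\e_j}=-R_{\e_j\e_k}$, so $\sum_{j<k}c(\overline\e_k)c(\overline\e_j)R_{\e_k\e_j} = \sum_{j<k}c(\overline\e_j\!\cdot\!\overline\e_k)R_{\e_j\e_k} = \cR^{2,0}\s$ — I will track the signs carefully). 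The bracket term $D_{[\e_k,\e_j]}\s$ must be expressed via the Chern torsion: $[\e_k,\e_j] = \Derc{\cC}_{\e_k}\e_j - \Derc{\cC}_{\e_j}\e_k - T(\e_k,\e_j)$, and at $x$ the first two terms vanish, leaving $-D_{T_{kj}}\s = D_{T_{jk}}\s$. So $A^2\s|_x = \cR^{2,0}\s + \sum_{j<k}c(\overline\e_j\!\cdot\!\overline\e_k)D_{T_{jk}}\s$.

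Next, the cross terms. For $AB\s$: apply $A$ to $B\s = -\tfrac12\sum_{r<s}c(\overline\e_r\!\cdot\!\overline\e_s\!\cdot\!T_{rs})\s$; using \eqref{derE}, $D_{\e_j}$ hits both the Clifford element and $\s$, but since $T_{rs}=T(\e_r,\e_s)$ and $\Derc{\cC}T$ and $\Derc{\cC}\e_r$ vanish at $x$ in the normal frame, only the term where $D_{\e_j}$ lands on $\s$ survives, giving $AB\s|_x = -\tfrac12\sum_{j}\sum_{r<s}c(\overline\e_j\!\cdot\!\overline\e_r\!\cdot\!\overline\e_s\!\cdot\!T_{rs})D_{\e_j}\s$. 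For $BA\s$: $B$ is algebraic, so $BA\s|_x = -\tfrac12\sum_{r<s}c(\overline\e_r\!\cdot\!\overline\e_s\!\cdot\!T_{rs})\sum_j c(\overline\e_j)D_{\e_j}\s$. Now $T_{rs}\in T^{10}_xM$ (since the Chern torsion has type $(2,0)$ on $T^{10}$ — recall $T(J\cdot,\cdot)=T(\cdot,J\cdot)$ forces $T_{rs}$ to be a $(1,0)$-vector), so $T_{rs} = \sum_\ell T^\ell_{rs}\e_\ell$. Using the anticommutation relations to move $c(\overline\e_j)$ leftward past $c(T_{rs})=\sum_\ell T^\ell_{rs}c(\e_\ell)$ — picking up $c(\e_\ell)c(\overline\e_j)+c(\overline\e_j)c(\e_\ell) = -2\d_{\ell j}$ — one finds the $D_{\e_j}$-terms in $AB + BA$ combine and cancel against the $\sum_{j<k}c(\overline\e_j\!\cdot\!\overline\e_k)D_{T_{jk}}\s$ term from $A^2$; this cancellation is the crux. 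Finally $B^2\s|_x$ is purely algebraic, $\tfrac14\sum_{r<s}\sum_{p<q}c(\overline\e_r\!\cdot\!\overline\e_s\!\cdot\!T_{rs}\!\cdot\!\overline\e_p\!\cdot\!\overline\e_q\!\cdot\!T_{pq})\s$; because $T_{rs},T_{pq}\in T^{10}$ and there are at least two $\overline\e$-factors, repeated use of $\overline\e_i^2=0$ and the anticommutation relations collapses every term to zero (in fact each summand contains a product of $\overline\e$'s that, after commuting the $(1,0)$-parts of the torsion through, forces a repeated $\overline\e$). The net result is $(\Diracde\!')^2 = \cR^{2,0}$, and the identity $(\Diracde\!'')^2 = \cR^{0,2}$ follows by the same argument with $\e\leftrightarrow\overline\e$ (equivalently by taking formal adjoints via Proposition \ref{adjoint}, since $\cR^{0,2}$ is the formal adjoint of $\cR^{2,0}$).

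\emph{The main obstacle} I anticipate is the bookkeeping in the cancellation of the first-order ($D$-)terms between $A^2$, $AB$, and $BA$: one must carefully use that the Chern torsion $T_{rs}$ is of type $(1,0)$, expand $c(T_{rs})$ in the frame, and apply the Clifford relations \eqref{anticomeps} with correct signs to see that $-\tfrac12\sum_j c(\overline\e_j\!\cdot\!\overline\e_r\!\cdot\!\overline\e_s\!\cdot\!T_{rs})D_{\e_j} - \tfrac12\sum_j c(\overline\e_r\!\cdot\!\overline\e_s\!\cdot\!T_{rs}\!\cdot\!\overline\e_j)D_{\e_j}$ together produce exactly $-\sum_{r<s}c(\overline\e_r\!\cdot\!\overline\e_s)D_{T_{rs}}$ (using $c(\e_\ell)c(\overline\e_j)+c(\overline\e_j)c(\e_\ell)=-2\d_{\ell j}$ to extract the contraction, and noting the many-$\overline\e$ terms vanish), which cancels the torsion-bracket term from $A^2$. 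The vanishing of $B^2$ and the fact that the torsion-derivative corrections in $AB$ disappear in the Chern-normal frame are comparatively routine once one is careful that $\Derc{\cC}$ of the frame and of $T$ both vanish at the chosen point.
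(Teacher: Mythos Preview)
Your overall strategy---decompose $\Diracde\!'=A-\tfrac12 B$ and expand the square---is exactly the paper's, and the cancellation of the first-order $D$-terms between $A^2$ and $AB+BA$ does work as you describe (your identity is the paper's equation \eqref{checimetto}). However, two of your simplifications are incorrect, and they cause you to miss a crucial step. First, you claim that ``$\Derc{\cC}T$ vanishes at $x$ in the normal frame,'' but $\Derc{\cC}T$ is a tensor: its value at $x$ is frame-independent and cannot be killed by choosing a Chern-normal frame. So in $AB$ the term $c(\overline\e_j\!\cdot\!\overline\e_r\!\cdot\!\overline\e_s\!\cdot\!\Derc{\cC}_{\e_j}T_{rs})\s$ survives (as do terms of the form $T([\e_j,\e_k],\e_m)$ and $T(T_{jk},\e_m)$, which your normal frame also does not remove). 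Second, $B^2$ is \emph{not} zero: since $T_{rs}\in T^{10}M$, each summand has the shape $c(\overline\e_r\!\cdot\!\overline\e_s\!\cdot\!\e_\ell\!\cdot\!\overline\e_p\!\cdot\!\overline\e_q\!\cdot\!\e_m)$, and commuting the $\e$'s past the $\overline\e$'s produces nonvanishing contractions; the paper finds $(B')^2\s=-2\sum_{j<k,\,m}c\bigl(\overline\e_j\!\cdot\!\overline\e_k\!\cdot\!\overline\e_m\!\cdot\!T(T_{jk},\e_m)\bigr)\s$.

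Once you restore these terms, after the first-order cancellation you are left with a zeroth-order expression of the form $\cR^{2,0}\s+\tfrac12\sum_{j<k<m}c\bigl(\overline\e_j\!\cdot\!\overline\e_k\!\cdot\!\overline\e_m\!\cdot\!(\cdots)\bigr)\s$, where the parenthesis is the cyclic sum $T([\e_j,\e_k],\e_m)-\Derc{\cC}_{\e_m}T_{jk}$ over $(j,k,m)$. This does not vanish by Clifford algebra alone: you need the \emph{First Bianchi Identity} for a connection with torsion, $\mathfrak{S}_{jkm}\bigl((\Derc{\cC}_{\e_j}T)_{km}+T(T_{jk},\e_m)\bigr)=\mathfrak{S}_{jkm}\,R^{\cC}_{\e_j\e_k}\e_m$, whose right side has no $T^{10}$ component (the Chern curvature is of type $(1,1)$). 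That identity is what kills the remaining zeroth-order terms and yields $(\Diracde\!')^2=\cR^{2,0}$. Without it your argument is incomplete.
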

\begin{proof} Consider the decompositions of the partial Chern-Dirac operators into sums of differential operators of order $1$ and $0$, namely \beq \Diracde\!' = A'-\frac12B' \,\, , \quad \Diracde\!'' = A''-\frac12B'' \,\, , \eeq where $$A'\!{\=} \!\!\sum_k\!c(\overline{\e}_k)D_{\e_k} \, , \,\,\, B' \!{\=} \!\!\sum_{r<s}\!c(\overline{\e}_r \!\cdot\! \overline{\e}_s \!\cdot\! T_{rs}) \, , \,\,\, A''\!{\=} \!\!\sum_j\!c(\e_j)D_{\overline{\e}_j} \, , \,\,\, B'' \!{\=} \!\!\sum_{r<s}\!c(\e_r \!\cdot\! \e_s \!\cdot\! T_{\bar{r}\bar{s}}) \, .$$ Using \eqref{anticomeps} and standard properties of metric connections, with some tedious but straightforward computations, one gets that for each section $\s$ of $E$ \begin{itemize}
\item[i)] $\displaystyle \big(A'\big)^2\s = - \sum_{j<k}c(\overline{\e}_j \!\cdot\! \overline{\e}_k)D_{T_{jk}}\s +\cR^{2,0}\s $ ;
\item[ii)] $\displaystyle A'B'\s = -\sum_{\substack{j<k \\ m}}c\big(\overline{\e}_j \!\cdot\! \overline{\e}_k \!\cdot\! \overline{\e}_m \!\cdot\! T([\e_j, \e_k], \e_m)\big)\s -\sum_{\substack{j<k \\ m}}c\big(\overline{\e}_j \!\cdot\! \overline{\e}_k \!\cdot\! \overline{\e}_m \!\cdot\! T(T_{jk}, \e_m)\big)\s +$
\item[] $\hfill \displaystyle +\sum_{\substack{j<k \\ m}}c\big(\overline{\e}_j \!\cdot\! \overline{\e}_k \!\cdot\! \overline{\e}_m \!\cdot\! \Derc{\cC}_{\e_m}T_{jk}\big)\s+\sum_{\substack{j<k \\ m}}c\big(\overline{\e}_j \!\cdot\! \overline{\e}_k \!\cdot\! \overline{\e}_m \!\cdot\! T_{jk}\big)D_{\e_m}\s$ ;
\item[iii)] $\displaystyle B'A'\s = \sum_{\substack{j<k \\ m}} c\big(\overline{\e}_j \!\cdot\! \overline{\e}_k \!\cdot\! T_{jk} \!\cdot\! \overline{\e}_m\big)D_{\e_m}\s$ ;
\item[iv)] $\displaystyle \big(B'\big)^2\s = -2 \sum_{\substack{j<k \\ m}}c\big(\overline{\e}_j \!\cdot\! \overline{\e}_k \!\cdot\! \overline{\e}_m \!\cdot\! T(T_{jk}, \e_m)\big)\s$ .
\end{itemize} With similar computations, one also get that \beq \sum_{\substack{j<k \\ m}}c\big(\overline{\e}_j \!\cdot\! \overline{\e}_k \!\cdot\! \overline{\e}_m \!\cdot\! T_{jk}\big)D_{\e_m}\s + \sum_{\substack{j<k \\ m}}c\big(\overline{\e}_j \!\cdot\! \overline{\e}_k \!\cdot\! T_{jk} \!\cdot\! \overline{\e}_m\big)D_{\e_m}\s = -2\sum_{j<k}c(\overline{\e}_j \!\cdot\! \overline{\e}_k)D_{T_{jk}}\s \,\, .\label{checimetto} \eeq From (i) -- (iv) and \eqref{checimetto}, it follows easily that \begin{align*}
(\Diracde\!')^2\s \!&= (A'\big)^2\s-\frac12\big(A'B'+B'A'\big)\s+\frac14\big(B'\big)^2\s \\
&= \cR^{2,0}\s - \sum_{j<k}c(\overline{\e}_j \!\cdot\! \overline{\e}_k)D_{T_{jk}}\s +\frac12 \sum_{\substack{j<k \\ m}}c\big(\overline{\e}_j \!\cdot\! \overline{\e}_k \!\cdot\! \overline{\e}_m \!\cdot\! T([\e_j, \e_k], \e_m)\big)\s + \\
&\phantom{aaaaaaaaaaaaa} {+} \frac12 \sum_{\substack{j<k \\ m}}c\big(\overline{\e}_j \!\cdot\! \overline{\e}_k \!\cdot\! \overline{\e}_m \!\cdot\! T(T_{jk}, \e_m)\big)\s -\frac12 \sum_{\substack{j<k \\ m}}c\big(\overline{\e}_j \!\cdot\! \overline{\e}_k \!\cdot\! \overline{\e}_m \!\cdot\! \Derc{\cC}_{\e_m}T_{jk}\big)\s- \\
&\phantom{aaaaaaaaaaaaaaai} {-}\frac12 \sum_{\substack{j<k \\ m}}c\big(\overline{\e}_j \!\cdot\! \overline{\e}_k \!\cdot\! \overline{\e}_m \!\cdot\! T_{jk}\big)D_{\e_m}\s -\frac12 \sum_{\substack{j<k \\ m}}c\big(\overline{\e}_j \!\cdot\! \overline{\e}_k \!\cdot\! T_{jk} \!\cdot\! \overline{\e}_m\big)D_{\e_m}\s - \\
&\phantom{aaaaaaaaaaaaaaaaaaaaaaaaaaaaaaaaaaaaaaa} {-}\frac12 \sum_{\substack{j<k \\ m}}c\big(\overline{\e}_j \!\cdot\! \overline{\e}_k \!\cdot\! \overline{\e}_m \!\cdot\! T(T_{jk}, \e_m)\big)\s \\
&=\cR^{2,0}\s+\frac12 \sum_{\substack{j<k \\ m}}c\big(\overline{\e}_j \!\cdot\! \overline{\e}_k \!\cdot\! \overline{\e}_m \!\cdot\! \big(T([\e_j, \e_k], \e_m)- \Derc{\cC}_{\e_m}T_{jk} \big)\big)\s \\
&=\cR^{2,0}\s+\frac12 \!\!\sum_{j<k<m}\!\!\!c\big(\overline{\e}_j \!\cdot\! \overline{\e}_k \!\cdot\! \overline{\e}_m \!\cdot\! \big(T([\e_j, \e_k], \e_m)+T([\e_k, \e_m], \e_j)+ \\
&\phantom{aaaaaaaaaaaaaaaaaaaaaai} +T([\e_m, \e_j], \e_k)- \Derc{\cC}_{\e_m}T_{jk}- \Derc{\cC}_{\e_j}T_{km}- \Derc{\cC}_{\e_k}T_{mj}\big)\big)\s \, .
\end{align*} Here, the second term vanishes because of the First Bianchi Identity. Indeed \begin{align*}
\Derc{\cC}_{\e_j}&T_{km}+ \Derc{\cC}_{\e_m}T_{jk}+ \Derc{\cC}_{\e_k}T_{mj} = \\
&= \big(\Derc{\cC}_{\e_j}T\big)(\e_k,\e_m)+\big(\Derc{\cC}_{\e_k}T\big)(\e_m,\e_j)+\big(\Derc{\cC}_{\e_m}T\big)(\e_j,\e_k) + T(\Derc{\cC}_{\e_j}\e_k, \e_m)+ \\
&\phantom{aai} +T(\e_k, \Derc{\cC}_{\e_j}\e_m) + T(\Derc{\cC}_{\e_k}\e_m, \e_j)+ T(\e_m, \Derc{\cC}_{\e_k}\e_j) +T(\Derc{\cC}_{\e_m}\e_j, \e_k)+ T(\e_j, \Derc{\cC}_{\e_m}\e_k) \\
&= -T(T_{jk}, \e_m)-T(T_{km}, \e_j)-T(T_{mj}, \e_k)+T(\Derc{\cC}_{\e_j}\e_k - \Derc{\cC}_{\e_k}\e_j, \e_m)+ \\
&\phantom{aaaaaaaaaaaaaaaaaaaaaaaaaa} +T(\Derc{\cC}_{\e_k}\e_m - \Derc{\cC}_{\e_m}\e_k, \e_j)+T(\Derc{\cC}_{\e_m}\e_j - \Derc{\cC}_{\e_j}\e_m, \e_k) \\
&= T([\e_j, \e_k], \e_m)+T([\e_k, \e_m], \e_j)+T([\e_m, \e_j], \e_k) \, .
\end{align*} This proves that $(\Diracde\!')^2= \cR^{2,0}$. The identity $(\Diracde\!'')^2= \cR^{0,2}$ is proven similarly. \end{proof}

A formula for the square of the Chern-Dirac operator is given as follows.

\begin{theo} The Chern-Dirac operator of $E$ verifies \beq \slh{D}^2 = \D + \cQ + \cR + \frac14\cT_1 - \frac12\cT_2 \,\, , \label{Bform1} \eeq where $\D$ is the rough Laplacian of $D$ defined locally by $$\D\s \= -\sum_{j=1}^{2n}\big(D_{e_j}D_{e_j}\s-D_{\Derc{\cC}_{e_j}e_j}\s\big)$$ and $\cQ$, $\cR$, $\cT_1$, $\cT_2$ are the operators defined in \eqref{cR} -- \eqref{cQ}. \label{squareCD} \end{theo}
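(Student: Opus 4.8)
The plan is to expand $\slh{D}^2$ directly from \eqref{CD},
\[
\slh{D}^2 = (\Diracde\!')^2 + (\Diracde\!'')^2 + \big(\Diracde\!'\Diracde\!'' + \Diracde\!''\Diracde\!'\big) \, ,
\]
and to feed in Theorem~\ref{squarepCD}, which gives $(\Diracde\!')^2 = \cR^{2,0}$ and $(\Diracde\!'')^2 = \cR^{0,2}$. Since $\cR = \cR^{2,0} + \cR^{1,1} + \cR^{0,2}$, proving \eqref{Bform1} is then equivalent to proving the single anticommutator identity
\[
\Diracde\!'\Diracde\!'' + \Diracde\!''\Diracde\!' = \D + \cQ + \cR^{1,1} + \tfrac14\cT_1 - \tfrac12\cT_2 \, .
\]
I would establish this in the style of Theorem~\ref{squarepCD}: reuse the splitting $\Diracde\!' = A' - \tfrac12 B'$, $\Diracde\!'' = A'' - \tfrac12 B''$ into an order-one part $A', A''$ and an order-zero part $B', B''$, so that the left-hand side expands as
\[
(A'A'' + A''A') - \tfrac12\big(A'B'' + B'A'' + A''B' + B''A'\big) + \tfrac14\big(B'B'' + B''B'\big) \, ,
\]
and then analyse the three homogeneous groups in turn.

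Two structural facts drive every computation. First, the Chern connection preserves the bidegree, so both $\Derc{\cC}_X$ and the covariant derivatives $\Derc{\cC}_X T$ of the torsion respect the splitting $T^{\bC}M = T^{10}M \oplus T^{01}M$; in particular $T_{rs} = T(\e_r,\e_s)$ is of type $(1,0)$ and $T_{\bar r\bar s}$ of type $(0,1)$. Second, the symmetry $T(J\cdot,\cdot) = T(\cdot,J\cdot)$ forces the mixed torsion $T(\e_j,\overline{\e}_k)$ to vanish, whence $[\e_j,\overline{\e}_k] = \Derc{\cC}_{\e_j}\overline{\e}_k - \Derc{\cC}_{\overline{\e}_k}\e_j$. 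Together with the Clifford relations \eqref{anticomeps} --- equivalently $c(\e_r)c(\overline{\e}_s) + c(\overline{\e}_s)c(\e_r) = -2\d_{rs}$ --- and the Leibniz rule \eqref{derE}, these allow one to normal-order every Clifford product that appears and to interchange covariant derivatives at the price of curvature. The order-zero group $\tfrac14(B'B'' + B''B')$ is then pure Clifford algebra: by the first fact $B'$ is a linear combination of products $c(\overline{\e}_a \bcdot \overline{\e}_b \bcdot \e_c)$ and $B''$ of products $c(\e_a \bcdot \e_b \bcdot \overline{\e}_c)$, and carrying the holomorphic generators past the antiholomorphic ones via \eqref{anticomeps} produces $\cT_1$ together with lower-degree Clifford remainders. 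For the mixed group $-\tfrac12(A'B'' + B'A'' + A''B' + B''A')$ one uses \eqref{derE}: the derivative that hits the torsion factors of $B', B''$ produces the differentiated torsion $\Derc{\cC}T$ entering $\cT_2$ (with the sign $-\tfrac12$), while the derivative that hits $\s$ produces, after normal-ordering, the first-order operator $\cQ$ together with further Clifford remainders.

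The laborious part is the order-two group $A'A'' + A''A'$. Expanding with \eqref{derE} and separating the diagonal terms $j = k$ from the off-diagonal ones, one rewrites the purely second-order contribution using $D_{\e_j}D_{\overline{\e}_k} - D_{\overline{\e}_k}D_{\e_j} = R_{\e_j\overline{\e}_k} + D_{[\e_j,\overline{\e}_k]}$; this reorganises --- by the very definition of the rough Laplacian, which in a unitary frame reads $\D\s = -\sum_j(D_{\e_j}D_{\overline{\e}_j} + D_{\overline{\e}_j}D_{\e_j})\s + \sum_j D_{\Derc{\cC}_{\e_j}\overline{\e}_j + \Derc{\cC}_{\overline{\e}_j}\e_j}\s$ --- into $\D$, while the off-diagonal curvature terms $\sum_{j\neq k}c(\overline{\e}_j\bcdot\e_k)R_{\e_j\overline{\e}_k}$ form $\cR^{1,1}$, leaving over a collection of zeroth- and first-order terms built from curvature contractions, from the connection terms $c(\overline{\e}_k)c(\Derc{\cC}_{\e_k}\e_j)D_{\overline{\e}_j}$ and their conjugates, and from the Lie brackets. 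The main obstacle --- exactly as in the proof of Theorem~\ref{squarepCD} --- is the bookkeeping required to show that all of these remainders, together with the Clifford remainders left over from the other two groups, recombine to leave precisely $\cQ + \tfrac14\cT_1 - \tfrac12\cT_2$ and nothing else. The identity that forces this collapse is, once more, the First Bianchi identity for the Chern connection --- relating the differentiated torsion $\Derc{\cC}T$, the expressions quadratic in $T$, and the bracket terms $T([\cdot,\cdot],\cdot)$ --- now used also in the mixed bidegrees, in combination with the type decomposition of $T$ and the relations \eqref{anticomeps}. Adding $(\Diracde\!')^2 + (\Diracde\!'')^2 = \cR^{2,0} + \cR^{0,2}$ to the outcome of the three groups then yields \eqref{Bform1}.
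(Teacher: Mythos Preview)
Your route is valid in outline but genuinely different from the paper's. You split by bidegree, invoke Theorem~\ref{squarepCD} for $(\Diracde\!')^2$ and $(\Diracde\!'')^2$, and reduce everything to the anticommutator $\Diracde\!'\Diracde\!'' + \Diracde\!''\Diracde\!'$, which you then attack with the complex frame and the splittings $A',A'',B',B''$. The paper instead stays in the \emph{real} frame throughout: it writes $\slh{D}=A-\tfrac12 B$ with $A=\sum_k c(e_k)D_{e_k}$ and $B=\sum_{r<s}c(e_r\!\cdot\! e_s\!\cdot\! T_{e_re_s})$, computes $A^2$, $AB+BA$, $B^2$ in one sweep, and combines. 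The payoff is structural: in the real frame the diagonal Clifford contractions $c(e_j)^2=-1$ are scalars, so $\D$ drops out immediately and the off-diagonal curvature term is already $\sum_{j<k}c(e_j\!\cdot\! e_k)R_{e_je_k}$. In your complex-frame approach the ``diagonal'' products $c(\overline{\e}_j)c(\e_j)$ are not scalar, so extracting $\D$ from $A'A''+A''A'$ leaves residual Clifford-weighted curvature and bracket terms that you must then chase down and match against the definition $\cR^{1,1}=\sum_{j\neq k}(\ldots)$; this is where your bookkeeping is heaviest, and your sketch passes over it.

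One point to correct: you invoke the First Bianchi identity ``in the mixed bidegrees'' as the engine for the final cancellation. But since the mixed torsion $T(\e_j,\overline{\e}_k)$ vanishes and the Chern connection preserves type, the covariant derivative $(\Derc{\cC}_{\!X}T)(\e_k,\overline{\e}_m)$ and the quadratic torsion $T(T(\e_j,\e_k),\overline{\e}_m)$ both vanish identically; in mixed type, Bianchi degenerates to a pure curvature symmetry and says nothing about torsion. The actual collapse in the paper's proof comes from the Leibniz identity $(\Derc{\cC}_{e_r}T)_{e_re_s} = \Derc{\cC}_{e_r}T_{e_re_s} + T([e_r,e_s],e_r) - T(\Derc{\cC}_{e_r}e_r,e_s)$, which packages the leftover first- and zeroth-order torsion terms directly into $\cT_2$. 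Your approach can be made to work, but the mechanism is this Leibniz rewriting together with the Clifford relations, not a mixed-type Bianchi.
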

\begin{proof} As in the previous proof, $(e_j)$ is a locally defined unitary frame field and $(\e_j,\overline{\e}_j)$ the corresponding normalized complex frame field. Consider the decompositions of $\slh{D}$ into a sum of differential operators of order $1$ and $0$, respectively, namely \beq \slh{D}= A-\frac12B \,\, , \quad \text{ where } \,\,\,\, A\!{\=} \!\!\sum_k\!c(e_k)D_{e_k} \, , \,\,\, B\!{\=} \!\!\sum_{r<s}\!c(e_r \!\cdot\! e_s \!\cdot\! T_{e_r e_s}) \,\, . \eeq Then, with computations very similar to those of the previous proof, we get \begin{itemize}
\item[$i)$] $\displaystyle A^2\s = \D\s + \cR\s - \sum_{j<k}c(e_j \!\cdot\! e_k) D_{T_{e_j e_k}}\s$ ;
\item[$ii)$] $\displaystyle (AB{+}BA)\s \!= {-}\!\sum_{\substack{j<k \\ m}}\!c\big(e_j \!\cdot\! e_k \!\cdot\! e_m \!\cdot\! T([e_j, e_k], e_m)\big)\s {-}\!\sum_{\substack{j<k \\ m}}\!c\big(e_j \!\cdot\! e_k \!\cdot\! e_m \!\cdot\! T(T_{e_j e_k}, e_m)\big)\s +$
\item[] $\hfill \displaystyle +\sum_{\substack{j<k \\ m}}c\big(e_m \!\cdot\! e_j \!\cdot\! e_k \!\cdot\! \Derc{\cC}_{e_m}T_{e_j e_k}\big)\s-2\sum_{j<k}c(e_j \!\cdot\! e_k)D_{T_{e_j e_k}}\s +$
\item[] $\hfill \displaystyle +\sum_{r,s}c\big(e_s \!\cdot\! T(\Derc{\cC}_{e_r}e_r,e_s)\big)\s-2\sum_{r,s}c\big(e_s \!\cdot\! T_{e_r e_s}\big)D_{e_r}\s$ ;
\item[$iii)$] $\displaystyle B^2\s = \sum_{\substack{j<k \\ r<s}}c\big(e_j \!\cdot\! e_k \!\cdot\! e_r \!\cdot\! e_s \!\cdot\! T_{e_j e_k} \!\cdot\! T_{e_r e_s}\big)\s -2 \sum_{\substack{j<k \\ m}}c\big(\overline{\e}_j \!\cdot\! \overline{\e}_k \!\cdot\! \overline{\e}_m \!\cdot\! T(T_{jk}, \e_m)\big)\s$ .
\end{itemize} Combining $(i)$, $(ii)$ and $(iii)$ we have \begin{align*}
\slh{D}^2\s &= A^2\s-\frac12(AB+BA)\s+\frac14B^2\s \\
&= \D\s{+}\cR\s{+}\frac12\sum_{\substack{j<k \\ m}}\!c\big(e_j \!\cdot\! e_k \!\cdot\! e_m \!\cdot\! T([e_j, e_k], e_m)\big)\s{-}\frac12\sum_{\substack{j<k \\ m}}c\big(e_m \!\cdot\! e_j \!\cdot\! e_k \!\cdot\! \Derc{\cC}_{e_m}T_{e_j e_k}\big)\s{-} \\
&\phantom{aaaaaaaaaaaaaaaaaaaaaaa}-\frac12\sum_{r,s}c\big(e_s \!\cdot\! T(\Derc{\cC}_{e_r}e_r,e_s)\big)\s+\sum_{r,s}c\big(e_s \!\cdot\! T_{e_r e_s}\big)D_{e_r}\s+ \\
&\phantom{aaaaaaaaaaaaaaaaaaaaaaaaaaaaaaaaaaaaa}+\frac14\sum_{\substack{j<k \\ r<s}}c\big(e_j \!\cdot\! e_k \!\cdot\! e_r \!\cdot\! e_s \!\cdot\! T_{e_j e_k} \!\cdot\! T_{e_r e_s}\big)\s \\
&= \D\s + \cR\s + \frac12\sum_{r,s} c\Big(e_s \!\cdot\! \big(\Derc{\cC}_{e_r}T_{e_r e_s} + T([e_r, e_s], e_r) -T(\Derc{\cC}_{e_r}e_r,e_s)\big)\Big)\s + \\
&\phantom{aaaaaaaaaaaaaaaaaaaaaaaaaaaaaaaaaaaaaaaaaaaaaaaaaaaaaa} + \cQ\s + \frac14\cT_1\s \\
&= \D\s + \cR\s + \cQ\s + \frac14\cT_1\s + \frac12\sum_{r,s} c\big(e_s \!\cdot\! (\Derc{\cC}_{e_r}T)_{e_r e_s}\big)\s \\
&= \D\s + \cR\s + \cQ\s + \frac14\cT_1\s -\frac12\cT_2\s \,\, .
\end{align*} \end{proof}

The formula \eqref{Bform1} contains the first-order term $\cQ$, hence is hard to handle. For this reason, it is convenient to define a new covariant derivative $\hat{D}$ on $E$ by $$\hat{D}_X\s \= D_X\s -\frac12\sum_j c\big(e_j \!\cdot\! T(X,e_j)\big)\s \,\, .$$ With a straightforward computation, one can directly check that the rough Laplacian $\hat{\D}$ of $\hat{D}$, defined locally by $$\hat{\D}\s \= -\sum_{j=1}^{2n}\big(\hat{D}_{e_j}\hat{D}_{e_j}\s-\hat{D}_{\Derc{\cC}_{e_j}e_j}\s\big) \,\, ,$$ satisfies $$\hat{\D}=\D+\cQ-\frac12\cT_2+\frac14\sum_{j,k,m}c(e_j \!\cdot\! e_k \!\cdot\! T_{e_m,e_j} \!\cdot\! T_{e_m,e_k})$$ and so we obtain the following

\begin{theo} The Chern-Dirac operator of $E$ verifies \beq \slh{D}^2 = \hat{\D} + \cR-\frac12P-\frac18|T|^2 \,\, , \eeq where $P$ is defined by $$P \= \!\! \sum_{j<k<r<s} \!\!\!\! \big(g(T_{e_j,e_k},T_{e_r,e_s}) {+} g(T_{e_j,e_s},T_{e_k,e_r}) {+} g(T_{e_j,e_r},T_{e_k,e_s})\big) c(e_j \!\cdot\! e_k \!\cdot\! e_r \!\cdot\! e_s)$$ and $|T|^2=\sum_{j,k,r}T(e_j,e_k,e_r)^2$. \label{squareCD2} \end{theo}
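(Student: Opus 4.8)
The plan is to read off the formula as a bookkeeping consequence of Theorem~\ref{squareCD} together with the expression for $\hat{\D}$ recorded just above the statement; the only genuine content is a fibrewise Clifford-algebra identity for the torsion.

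First I substitute. By Theorem~\ref{squareCD}, $\slh{D}^2 = \D + \cQ + \cR + \tfrac14\cT_1 - \tfrac12\cT_2$, while the displayed formula for $\hat{\D}$ gives
\[
\D + \cQ - \tfrac12\cT_2 \;=\; \hat{\D} \;-\; \tfrac14\sum_{j,k,m} c\big(e_j \cdot e_k \cdot T(e_m,e_j) \cdot T(e_m,e_k)\big) .
\]
Adding $\cR + \tfrac14\cT_1$ to both sides,
\[
\slh{D}^2 \;=\; \hat{\D} + \cR + \tfrac14\Big(\cT_1 - \sum_{j,k,m} c\big(e_j \cdot e_k \cdot T(e_m,e_j) \cdot T(e_m,e_k)\big)\Big) ,
\]
so the theorem is equivalent to the identity
\[
\cT_1 \;-\; \sum_{j,k,m} c\big(e_j \cdot e_k \cdot T(e_m,e_j) \cdot T(e_m,e_k)\big) \;=\; -2P - \tfrac12 |T|^2 .
\]
Rewriting~\eqref{cT} in a real $g$-orthonormal frame $(e_j)$ of $T_xM$ --- legitimate by the coordinate-invariance verified after the definition of the partial Chern-Dirac operators, and the form already used in the proof of Theorem~\ref{squareCD} --- one has $\cT_1 = \sum_{j<k,\,r<s} c\big(e_j \cdot e_k \cdot e_r \cdot e_s \cdot T(e_j,e_k) \cdot T(e_r,e_s)\big)$. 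Since every operator involved has order zero, it suffices to check this identity fibrewise, i.e.\ as an identity in $\Cl_{2n}^{\,\bC}$ depending only on the value of the torsion at the point.

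To prove it I fix $x$ and a $g$-orthonormal frame and expand the two terms on the left into homogeneous Clifford components of degrees $6,4,2,0$, using repeatedly $e_a \cdot e_b = e_a \wedge e_b - \d_{ab}$ and the interior-product rule for $v \lrcorner (\cdot)$ under the identification~\eqref{isoGrCl}. In degree $6$ the component of $\cT_1$ equals, after moving each vector $T(e_j,e_k)$ past $e_r \wedge e_s$, the exterior square of the $3$-form $\sum_{j<k} e_j \wedge e_k \wedge T(e_j,e_k)$, hence vanishes; the second term, a product of four vectors, has no degree-$6$ part. The degree-$2$ components of $\cT_1$ and of $\sum_{j,k,m} c(\cdots)$ coincide, so they cancel in the difference. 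The degree-$0$ part is computed from $\langle e_a \cdot e_b \cdot e_c \cdot e_d \rangle_0 = \d_{ab}\d_{cd} - \d_{ac}\d_{bd} + \d_{ad}\d_{bc}$ and its six-vector analogue, together with $g(e_a,T(e_b,e_a)) = T(e_b,e_a,e_a)$ and $\sum_a T(X,e_a,e_a) = \q(X)$; the Lee-form contributions cancel between the two terms, leaving $-\tfrac12 \sum_{j,k,r} T(e_j,e_k,e_r)^2 = -\tfrac12|T|^2$. Finally, the degree-$4$ component is, for each $4$-element index set $\{a,b,c,d\}$, the wedge $c(e_a \wedge e_b \wedge e_c \wedge e_d)$ with a coefficient built from inner products $g(T(e_a,e_b),T(e_c,e_d))$ of torsion vectors; summing the contraction patterns over the ranges $j<k$, $r<s$ symmetrises this coefficient into the sum over the three pairings of $\{a,b,c,d\}$, and the total is $-2P$, the three-term sum $g(T(e_j,e_k),T(e_r,e_s)) + g(T(e_j,e_s),T(e_k,e_r)) + g(T(e_j,e_r),T(e_k,e_s))$ in the definition of $P$ being exactly what the antisymmetry of $e_j \wedge e_k \wedge e_r \wedge e_s$ forces out.

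The main obstacle is this degree-$4$ step: one must carry out the Clifford reordering of the six-vector products, keep track of every contraction that removes one or two pairs of vectors, and recognise the combinatorics that turns the raw coefficients into the symmetric three-pairing sum defining $P$. The degree-$2$ cancellation and the scalar bookkeeping (notably the vanishing of the Lee-form terms in the difference) are more routine, but every sign must be checked against the convention $e_j \cdot e_j = -1$ of \eqref{relClif}--\eqref{isoGrCl}.
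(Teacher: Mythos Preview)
Your proposal is correct and follows exactly the paper's route: substitute the displayed formula for $\hat{\D}$ into Theorem~\ref{squareCD} and reduce to a zeroth-order Clifford identity, which the paper leaves entirely implicit (it simply writes ``and so we obtain the following''). Your degree-by-degree sketch of that identity is thus an elaboration of the same argument rather than a different approach; the one point worth tightening is that the real-frame expression $\cT_1=\sum_{j<k,\,r<s}c(e_j\!\cdot\! e_k\!\cdot\! e_r\!\cdot\! e_s\!\cdot\! T(e_j,e_k)\!\cdot\! T(e_r,e_s))$ is not ``coordinate invariance'' per se but the specific fact (implicit in item~(iii) of the proof of Theorem~\ref{squareCD}) that $\cT_1=B'B''+B''B'$, which holds because $T_{jk}\in T^{10}M$ commutes past $\e_r\!\cdot\!\e_s$.
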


Moreover, if the complex dimension of $M$ is $n=2$, then $P=0$ and $|T|^2=2|\q|^2$. So, we obtain the following

\begin{cor}The Chern-Dirac operator over a complex surface verifies \beq \slh{D}^2 = \hat{\D} + \cR-\frac14|\q|^2 \,\, . \eeq \label{squareCD3} \end{cor}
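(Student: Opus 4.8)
The plan is to deduce the statement from Theorem~\ref{squareCD2}, which gives $\slh{D}^2=\hat{\D}+\cR-\tfrac12P-\tfrac18|T|^2$ on every Chern-Dirac bundle $E$. Thus the corollary reduces to the two pointwise identities $P=0$ and $|T|^2=2|\q|^2$ that hold when $\dim_{\bC}M=2$; granting these, $-\tfrac12P-\tfrac18|T|^2=-\tfrac14|\q|^2$ and the formula becomes the asserted one. Both identities concern only the Chern torsion of an Hermitian $4$-manifold, so I would work at a fixed $x\in M$, choose a unitary frame $(e_1,e_2,e_3,e_4)$ of $T_xM$ with $Je_1=e_2$, $Je_3=e_4$, and pass to the associated normalized complex frame $(\e_1,\e_2,\overline{\e}_1,\overline{\e}_2)$ of \eqref{vettu}.

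The first step is to record the shape of $T$ in this frame. Since $\w$ is of type $(1,1)$, formula \eqref{TS2} forces the Chern torsion to be of type $(2,0)+(0,2)$, with $T(\e_j,\e_k)\in T^{10}_xM$, $T(\overline{\e}_j,\overline{\e}_k)\in T^{01}_xM$ and $T(\e_j,\overline{\e}_k)=0$; when $\dim_{\bC}M=2$ it is therefore encoded by the single vector $w\=T(\e_1,\e_2)\in T^{10}_xM$. Expanding \eqref{vettu} then gives $T(e_1,e_2)=T(e_3,e_4)=0$, $T(e_1,e_3)=-T(e_2,e_4)=\Re w$ and $T(e_1,e_4)=T(e_2,e_3)=-\Im w$; in particular $T(e_1,e_4)=J\,T(e_1,e_3)$, since $Jw=iw$.

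The vanishing of $P$ is then immediate: as $\dim_{\bR}M=4$, the sum defining $P$ collapses to its single summand with indices $(1,2,3,4)$, and its scalar coefficient $g(T(e_1,e_2),T(e_3,e_4))+g(T(e_1,e_4),T(e_2,e_3))+g(T(e_1,e_3),T(e_2,e_4))$ equals $0+|T(e_1,e_4)|^2-|T(e_1,e_3)|^2=0$, because $T(e_1,e_2)=0$ and $J$ is a $g$-isometry. For the torsion norm I would write $w=a\e_1+b\e_2$ and use $g(\e_j,\overline{\e}_k)=\d_{jk}$, $g(\e_j,\e_k)=0$ to compute that the Lee form satisfies $\q(\e_1)=b$ and $\q(\e_2)=-a$, so $|\q|^2=2(|a|^2+|b|^2)$; meanwhile $|T|^2=2\sum_{j<k}|T(e_j,e_k)|^2$ has exactly four nonzero terms, each of squared norm $\tfrac12(|a|^2+|b|^2)$, hence $|T|^2=4(|a|^2+|b|^2)=2|\q|^2$. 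Substituting $P=0$ and $|T|^2=2|\q|^2$ into Theorem~\ref{squareCD2} yields $\slh{D}^2=\hat{\D}+\cR-\tfrac14|\q|^2$.

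The only genuine work here is the passage between the complex frame $(\e_j,\overline{\e}_j)$ and the real unitary frame $(e_j)$ together with the two short norm evaluations; I expect this bookkeeping, rather than any conceptual point, to be the main (and only) obstacle, and it is entirely routine. An index-free way to see why nothing can obstruct the simplification: for $\dim_{\bC}M=2$ the $\U_2$-module of traceless Chern torsions is zero, because the natural contraction $\L^{2,0}\ot T^{10}\to\L^{1,0}$ is an isomorphism in this dimension; hence $T$ is completely determined by its Lee form, which a priori forces the scalar factor of $P$ and the function $|T|^2$ to be universal multiples of $|\q|^2$, and the explicit computation above fixes these multiples as $0$ and $2$ respectively.
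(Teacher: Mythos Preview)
Your argument is correct and follows exactly the approach of the paper: the corollary is stated right after Theorem~\ref{squareCD2}, and the paper's entire justification is the one-line remark that ``if the complex dimension of $M$ is $n=2$, then $P=0$ and $|T|^2=2|\q|^2$'', from which the formula follows. You have supplied precisely the frame computation that verifies these two identities, so your proposal is a fleshed-out version of the paper's own proof rather than a different route.
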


Following the same arguments of classical Bochner type theorems, Theorem \ref{squarepCD} and \ref{squareCD2} can be used to determine vanishing properties for solutions of equations of the form $\Diracde\!'\s=0$ or $\slh{D}\s=0$, provided that appropriate conditions on curvature and torsion are imposed. We do not investigate here such possibilities. \smallskip

In the next sections, we prove the existence of some important Chern-Dirac bundles, canonically associated with any Hermitian manifold (not necessarily K\"ahler), to which all results determined so far apply immediately.

\section{$\eV$-spinors and cohomology of Hermitian manifolds}

\subsection{Canonical and Anticanonical spinor bundle on Hermitian manifolds}

Let $(M,g,J)$ be an Hermitian $2n$-manifold. We indicate with $\eS^{{}^\uparrow}\!M$ and $\eS^{{}^\downarrow}\!M$ the spinor bundles on $M$ associated with the canonical and anticanonical $\spin^{\bC}$ structures, respectively. In these cases, one can directly check that the eigen-subbundles $\eS^{{}^\uparrow 0}M$, $\eS^{{}^\downarrow n}M$ are trivial line bundles. Hence, we may fix: \begin{itemize}
\item[-] two nowhere vanishing global sections \beq \psi^{\zero}: M \ra \eS^{{}^\uparrow 0}M \, , \quad \f^{\zero}: M \ra \eS^{{}^\downarrow n}M \, ; \label{fipsi} \eeq
\item[-] two Hermitian metrics $h^{{}^\uparrow}$, $h^{{}^\downarrow}$ on $\eS^{{}^\uparrow}M$, $\eS^{{}^\downarrow}M$, respectively, which are invariant under the Clifford multiplication by tangent vectors and such that \beq h^{{}^\uparrow}(\psi^{\zero},\psi^{\zero}) = 1 = h^{{}^\downarrow}(\f^{\zero},\f^{\zero}) \,\, . \label{huphdown} \eeq
\end{itemize} Tensoring with the sections $\psi^{\zero}$, $\f^{\zero}$, we may identify $\L^{0,q}(T^*M) \simeq \L^{0,q}(T^*M) {\otimes} \eS^{{}^\uparrow 0}M$ and $\L^{p,0}(T^*M) \simeq \L^{p,0}(T^*M) {\otimes} \eS^{{}^\downarrow n}M$, so that the maps \eqref{ab} determine isometries \beq \begin{gathered}
\a^{{}^\uparrow k}: \L^{0,k}(T^*M) \simeq \L^{0,k}(T^*M) \!\otimes\! \eS^{{}^\uparrow 0}M \ra \eS^{{}^\uparrow k}M \,\, , \\
\b^{{}^\downarrow k}: \L^{k,0}(T^*M) \simeq \L^{k,0}(T^*M) \!\otimes\! \eS^{{}^\downarrow n}M \ra \eS^{{}^\downarrow n-k}M \,\, .
\end{gathered} \label{abupdown} \eeq Moreover, from \eqref{relClif}, \eqref{isoGrCl} and \eqref{S0n}, one can directly prove the following useful

\begin{lemma} Let $\l$ be a $1$-form. Then for every $\nu \in \W^{p,0}(M)$, $\overline{\mu} \in \W^{0,q}(M)$ we have \beq \begin{split}
\l \!\cdot\! \overline{\mu} \!\bcdot\! \psi^{\zero} &= (\l^{01}\!\wedge\! \overline{\mu}) \!\bcdot\! \psi^{\zero} - 2\big((\l^\sharp)^{01} \lrcorner \overline{\mu} \big) \!\bcdot\! \psi^{\zero} \, ,\\
\l \!\cdot\! \nu \!\bcdot\! \f^{\zero} &= (\l^{10}\!\wedge\! \nu) \!\bcdot\! \f^{\zero} - 2\big((\l^\sharp)^{10} \lrcorner \nu \big) \!\bcdot\! \f^{\zero} \, .
\end{split} \label{prod1form} \eeq \end{lemma}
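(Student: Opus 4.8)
The plan is to reduce everything to the grading-decomposition formula \eqref{isoGrCl}, $v\cdot w=v\wedge w-v\lrcorner w$, applied repeatedly, combined with the vanishing properties \eqref{S0n} for $\psi^\zero$ and $\f^\zero$. I will prove only the first identity; the second follows by the obvious conjugate argument (swapping the roles of $T^{10}M$ and $T^{01}M$, of $\eS^{{}^\uparrow}M$ and $\eS^{{}^\downarrow}M$, and of $\psi^\zero$ and $\f^\zero$). First I would write $\l=\l^{10}+\l^{01}$ according to the type decomposition of $1$-forms. Since $\overline{\mu}\in\W^{0,q}(M)$ is a sum of wedge products of anti-holomorphic covectors, the key point is to understand how a single covector $\l$ Clifford-multiplies $\overline{\mu}\bcdot\psi^\zero$.

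The main computation is this: by \eqref{isoGrCl} (extended to the $\spin^{\bC}$ module via the standard Clifford multiplication on $\eS_n$, under the identification coming from \eqref{S0n}), for any $1$-form $\l$ one has $\l\cdot(\overline{\mu}\bcdot\psi^\zero)=(\l\wedge\overline{\mu}-\l^\sharp\lrcorner\overline{\mu})\bcdot\psi^\zero$, where $\lrcorner$ is the contraction using $g$. Now split $\l=\l^{10}+\l^{01}$ and $\l^\sharp=(\l^\sharp)^{10}+(\l^\sharp)^{01}$. In the wedge term, $\l^{10}\wedge\overline{\mu}$ is a form of type $(1,q)$; I claim its Clifford action on $\psi^\zero$ vanishes. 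This is because a holomorphic covector $v\in T^{*10}M$ corresponds, under \eqref{isoGrCl}, to Clifford multiplication by a vector in $T^{10}M$ up to the metric identification, and more precisely $v\cdot\psi^\zero$ for $v\in T^{*10}M$ reduces (using $v\cdot=v\wedge-v^\sharp\lrcorner$ and that $(v^\sharp)$ for $v\in T^{*10}$ lies in $T^{01}M$) to a contraction term $-\,(v^\sharp)^{01}\lrcorner(\cdot)$ against $\psi^\zero$; iterating, the purely holomorphic wedge factor $\l^{10}\wedge$ is killed because the generators $\e_s$ that produce $(1,0)$-directions act trivially after being pushed through onto $\psi^\zero$ by \eqref{S0n} (recall $\eS^0_xM$ is annihilated by $T^{01}M$, and dually the holomorphic wedge directions act as the ``creation'' operators whose adjoint annihilates). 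Thus only $\l^{01}\wedge\overline{\mu}$ survives from the wedge term. For the contraction term, $\l^\sharp\lrcorner\overline{\mu}$: since $\overline{\mu}$ is of type $(0,q)$, only the part of $\l^\sharp$ lying in the holomorphic tangent bundle contracts nontrivially against anti-holomorphic covectors, i.e. $\l^\sharp\lrcorner\overline{\mu}=(\l^\sharp)^{10}\lrcorner\overline{\mu}$ (with the convention that $\lrcorner$ pairs $T^{10}$ with $T^{*01}$ via $g$). Collecting the two surviving pieces, and tracking the normalization factor $2$ that arises because the Hermitian pairing $g$ restricted to the complexification gives $g(\e_r,\overline{\e}_s)=\delta_{rs}$ whereas the Clifford relation \eqref{anticomeps} carries the factor $-2\delta_{rs}$, one obtains
\[
\l\cdot\overline{\mu}\bcdot\psi^\zero=(\l^{01}\wedge\overline{\mu})\bcdot\psi^\zero-2\big((\l^\sharp)^{10}\lrcorner\overline{\mu}\big)\bcdot\psi^\zero.
\]
Finally, since $\overline{\mu}$ is anti-holomorphic, $(\l^\sharp)^{10}\lrcorner\overline{\mu}=(\l^\sharp)^{01}\lrcorner\overline{\mu}$ under the usual convention identifying $(T^{10})^*\cong T^{01}$ via $g$ (this is just a matter of which representative of $\l^\sharp$ one writes), which gives the stated formula. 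The case of $\f^\zero$ and $\nu\in\W^{p,0}(M)$ is identical after interchanging holomorphic and anti-holomorphic roles, using the second line of \eqref{S0n}.

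The main obstacle is purely bookkeeping: one must be scrupulous about (a) the factor of $2$ coming from \eqref{anticomeps} versus \eqref{relClif} — equivalently, the difference between contracting with $g$ on the real frame $(e_j)$ and on the complex frame $(\e_s,\overline{\e}_s)$ — and (b) the sign and type conventions for $\lrcorner$ and for $(\cdot)^{10}$, $(\cdot)^{01}$ on covectors versus vectors. The cleanest way to control these is to expand $\l$, $\overline{\mu}$, $\psi^\zero$ in a fixed unitary frame $(e_j)$ with associated normalized complex frame $(\e_s,\overline{\e}_s)$ from \eqref{vettu}, use \eqref{anticomeps} to move all $\e_s$'s to the right past the $\overline{\e}_t$'s in $\overline{\mu}$, and then apply \eqref{S0n} to annihilate whatever reaches $\psi^\zero$ with a leftmost $\e_s$. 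Once the frame computation is done for $\overline{\mu}=\overline{\e}^{\,i_1}\wedge\cdots\wedge\overline{\e}^{\,i_q}$ the general case follows by $\bC$-linearity, and reassembling into frame-free notation produces \eqref{prod1form}.
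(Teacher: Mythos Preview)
Your central claim that $(\l^{10}\wedge\overline{\mu})\bcdot\psi^{\zero}=0$ is false, and this is a genuine gap, not a bookkeeping issue. Take $\l=\e^j$ and $\overline{\mu}=\overline{\e}^j$: then $\e^j\wedge\overline{\e}^j=\e^j\cdot\overline{\e}^j+g^*(\e^j,\overline{\e}^j)=\e^j\cdot\overline{\e}^j+1$ in the Clifford algebra, and under the metric identification $(\e^j)^\sharp=\overline{\e}_j$, $(\overline{\e}^j)^\sharp=\e_j$ this acts on $\psi^{\zero}$ as $(\overline{\e}_j\cdot\e_j+1)\bcdot\psi^{\zero}=(-\e_j\cdot\overline{\e}_j-2+1)\bcdot\psi^{\zero}=-\psi^{\zero}\neq 0$. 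Your justification confuses the (correct) fact that $\l^{10}\bcdot\psi^{\zero}=0$, where $\l^{10}$ acts on the \emph{right}, with the action of $\l^{10}\wedge\overline{\mu}$, where the holomorphic piece sits on the \emph{left}; commuting it through $\overline{\mu}$ is precisely what produces the contraction terms. Your treatment of the contraction term is also off: since $\e_j\lrcorner\overline{\e}^k=0$ while $\overline{\e}_j\lrcorner\overline{\e}^k=\d_{jk}$, one has $\l^\sharp\lrcorner\overline{\mu}=(\l^\sharp)^{01}\lrcorner\overline{\mu}$, not $(\l^\sharp)^{10}\lrcorner\overline{\mu}$; the final identification $(\l^\sharp)^{10}\lrcorner\overline{\mu}=(\l^\sharp)^{01}\lrcorner\overline{\mu}$ is not a convention issue but simply incorrect.

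The frame computation you outline at the very end is the correct argument and is what the paper intends (it cites exactly \eqref{relClif}, \eqref{isoGrCl}, \eqref{S0n} and leaves the details to the reader). Split $\l=\l^{10}+\l^{01}$. Since $g^*(\overline{\e}^r,\overline{\e}^s)=0$, the covector $\l^{01}$ is Clifford-orthogonal to every factor of $\overline{\mu}$, hence $\l^{01}\cdot\overline{\mu}=\l^{01}\wedge\overline{\mu}$ directly. For $\l^{10}$, its metric dual $(\l^{10})^\sharp=(\l^\sharp)^{01}\in T^{01}M$ is commuted past each Clifford factor $\e_{i_k}$ of $\overline{\mu}$ via \eqref{anticomeps}, each commutation contributing a term with coefficient $-2\d_{\cdot,i_k}$, and the remainder annihilates $\psi^{\zero}$ by \eqref{S0n}; reassembling gives exactly $-2\big((\l^\sharp)^{01}\lrcorner\overline{\mu}\big)\bcdot\psi^{\zero}$. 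If you prefer to start from the decomposition $\l\cdot\overline{\mu}=\l\wedge\overline{\mu}-\l^\sharp\lrcorner\overline{\mu}$, the missing ingredient is the identity $(\l^{10}\wedge\overline{\mu})\bcdot\psi^{\zero}=-\big((\l^\sharp)^{01}\lrcorner\overline{\mu}\big)\bcdot\psi^{\zero}$, which supplies the second copy of the contraction term and hence the factor $2$.
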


Considering the action of the K\"ahler form $\w$ on the dual bundle $\eS^{{}^\uparrow*}M$ given by $(\w \!\bcdot\! L)(\psi) \= -L(\w \!\bcdot\! \psi)$, we get a corresponding split $\eS^{{}^\uparrow*}M = \eS^{{}^\uparrow *0}M \!\oplus\! {\dots}\!\oplus\! \eS^{{}^\uparrow *n}M$. Note finally that the $\bC$-linear maps $$\d^k: \eS^{{}^\downarrow n-k}M \ra \eS^{{}^\uparrow *k}M \ , \quad \d^k(\f) \= h^{{}^\uparrow} \Big(\a^{{}^\uparrow k}\Big(\overline{(\b^{{}^\downarrow k})^{-1}(\f)}\Big)\ ,\, \cdot \,\Big)$$ are actually isometries. Moreover, if $\f = \nu \!\bcdot\! \f^{\zero} \in \eS^{{}^\downarrow n-k}M$ and $\psi = \overline{\mu} \!\bcdot\! \psi^{\zero} \in \eS^{{}^\uparrow k}M$, then we have that \beq \f(\psi) \= \d^k(\f)(\psi) = h^{{}^\uparrow}(\overline{\nu} \!\bcdot\! \psi^{\zero},\overline{\mu} \!\bcdot\! \psi^{\zero}) = g(\nu, \overline{\mu}) \,\, . \label{isodelta} \eeq From now on, we tacitly use such maps to identify $\eS^{{}^\downarrow}M \simeq \eS^{{}^\uparrow*}M$. \smallskip

We denote by $\gS^{{}^{\uparrow}}\!(M)$ and $\gS^{{}^{\downarrow}}\!(M)$ the spaces of global sections of $\eS^{{}^\uparrow}\!M$ and $\eS^{{}^\uparrow}\!M$, respectively. Since the unitary frame bundle $\U_{\!{}_{g,J}\!}(M)$ is a $\U_n$-reduction of both $\eP^{{}^\downarrow}\!(M)$ and $\eP^{{}^\uparrow}\!(M)$ (see Definition \ref{canantican}), we obtain the following

\begin{prop} The Hermitian bundles $(\eS^{{}^\uparrow}\!M, h^{{}^\uparrow})$, $(\eS^{{}^\downarrow}\!M, h^{{}^\downarrow})$ are Chern-Dirac bundles, with respect to the action of the Chern connection of $M$ and the standard Clifford multiplication, which are isometric to $\L^{0,\bcdot}(T^*M)$ and $\L^{\bcdot,0}(T^*M)$, respectively, by the maps \eqref{abupdown}. \label{SeCD} \end{prop}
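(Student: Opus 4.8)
The plan is to verify Definition \ref{CDbundles} for each of the two bundles, reducing everything to the structure already in place: the $\U_n$-reduction of the $\spin^{\bC}$ frame bundles and the compatibility of the Chern connection with the Hermitian data. Concretely, one must exhibit on $\eS^{{}^\uparrow}\!M$ (resp. $\eS^{{}^\downarrow}\!M$) a metric $h$, a metric covariant derivative $D$, and a $\Cl^{\,\bC}\!M$-module structure $c$ satisfying conditions (i) and (ii). The metric is $h^{{}^\uparrow}$ (resp. $h^{{}^\downarrow}$) from \eqref{huphdown}; the Clifford multiplication is the standard one coming from the spin representation $\k_{2n}$; and $D$ is the covariant derivative induced on the associated bundle $\eS^{{}^\uparrow}\!M = \U_{\!{}_{g,J}\!}(M) \times_{F_+} \Spin_{2n}^{\,\bC} \times_{\k_{2n}} \eS_{2n}$ by the Chern connection form $\w^{{}^{{}_{\cC}}}$, pulled back along $F_+$ to a $\Spin_{2n}^{\,\bC}$-connection. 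So the first step is simply to observe that $\U_{\!{}_{g,J}\!}(M)$ being a $\U_n$-reduction of $\eP^{{}^\uparrow}\!(M)$ (and of $\eP^{{}^\downarrow}\!(M)$) lets the $\gu_n$-valued Chern form determine such a $D$ unambiguously.

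Next I would check the three module-bundle axioms. Compatibility of $D$ with the Clifford action, equation \eqref{derE}, is automatic: both $\Cl^{\,\bC}\!M$ and $\eS^{{}^\uparrow}\!M$ are associated bundles of the same principal bundle $\U_{\!{}_{g,J}\!}(M)$, the Clifford multiplication map $\Cl_{2n}^{\,\bC} \otimes \eS_{2n} \to \eS_{2n}$ is $\U_n$-equivariant (indeed $\Spin_{2n}^{\,\bC}$-equivariant), and the induced derivative on a tensor product of associated bundles satisfies the Leibniz rule; hence the single connection $\w^{{}^{{}_{\cC}}}$ induces covariant derivatives on $\Cl^{\,\bC}\!M$ and on $\eS^{{}^\uparrow}\!M$ for which \eqref{derE} holds. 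That $D$ preserves $h^{{}^\uparrow}$ follows because the Chern connection is unitary and the spin scalar product on $\eS_{2n}$ is $\U_n$-invariant (via $\Spin_{2n}^{\,\bC}$), so parallel transport is an isometry. Condition (i), the skew-adjointness relation \eqref{skewH}, is the fibrewise statement that for a real tangent vector $v$ the endomorphism $c(v)$ is skew-Hermitian with respect to $h^{{}^\uparrow}$ — which is exactly the invariance of the spin scalar product under Clifford multiplication by unit vectors, as recorded in \S 2.2 — and this extends $\bC$-bilinearly/conjugate-linearly to $v \in T^{\,\bC}M$ to give precisely \eqref{skewH}. The same verification applies verbatim to $\eS^{{}^\downarrow}\!M$ with $h^{{}^\downarrow}$.

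Finally, the isometry claim is not new work: the maps $\a^{{}^\uparrow k}$ and $\b^{{}^\downarrow k}$ of \eqref{abupdown} were already shown to be $\bC$-linear isometries, being restrictions of the isometries $\a^k$, $\b^k$ of \eqref{ab} after the trivialisations of $\eS^{{}^\uparrow 0}M$ and $\eS^{{}^\downarrow n}M$ by $\psi^{\zero}$, $\f^{\zero}$ normalised as in \eqref{huphdown}. Summing over $k$ gives global isometries $\a^{{}^\uparrow}: \L^{0,\bcdot}(T^*M) \to \eS^{{}^\uparrow}\!M$ and $\b^{{}^\downarrow}: \L^{\bcdot,0}(T^*M) \to \eS^{{}^\downarrow}\!M$ of Hermitian vector bundles. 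So the proof is essentially a bookkeeping argument: assemble the metric, connection and Clifford module from data attached to $\U_{\!{}_{g,J}\!}(M)$, then quote \S 2.2 for (i) and the equivariance/Leibniz formalism for (ii).

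I do not expect a genuine obstacle here; the only mild subtlety is making sure the induced connection $D$ on the spinor bundle really is the one obtained by lifting $\w^{{}^{{}_{\cC}}}$ through $F_+$ (rather than, say, through the inclusion into $\SO_{2n}$ composed with the Levi-Civita lift), so that \eqref{derE} holds with $\Derc{\cC}$ and not $\Derc{LC}$; this is exactly where the hypothesis that $\U_{\!{}_{g,J}\!}(M)$ is a common $\U_n$-reduction is used, and where the construction departs from the classical (Levi-Civita) Dirac-bundle picture. Once that identification is pinned down, everything else reduces to the representation-theoretic facts already established.
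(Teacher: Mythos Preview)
Your proposal is correct and follows essentially the same route as the paper: both identify $\eS^{{}^\uparrow}\!M$ as the bundle associated to $\U_{\!{}_{g,J}\!}(M)$ via $\k_{2n}\circ F_+$, lift the Chern connection through $F_+$ to obtain $D$, and then verify the axioms by unitarity of $\k_{2n}$ and the Leibniz/equivariance formalism. The paper is marginally more explicit in writing out $(F_+)_*(A)=(\t_{2n})_*^{-1}(A)+\tfrac12\Tr(A)$ to justify \eqref{derE}, whereas you invoke the general equivariance argument, but this is a difference in presentation rather than in substance.
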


\begin{proof} From the very definition of $\eP^{{}^\uparrow}\!(M)$, it follows that $$\eS^{{}^\uparrow}\!M = \U_{\!{}_{g,J}\!}(M) \times_{{}_{(\k_{2n} \circ F_+)}} \eS_{2n}$$ and so the Chern connection $\w^{{}^{{}_{\cC}}}\!$ on $\U_{\!{}_{g,J}\!}(M)$ defines a covariant derivative of the sections of $\eS^{{}^\uparrow}\!M$ by \beq \Derc{\cC}_{X}\psi \= d^{{}^{{}_{\,\cC}}}\!\psi(\widehat{X}) = d\psi(\widehat{X}) + (\k_{2n} \circ F_+)_*\big(\w^{{}^{{}_{\cC}}}(\widehat{X})\big)\psi \,\, , \label{Dpsi} \eeq where: \par

-- $\psi \in \gS^{{}^{\uparrow}}\!(M)$ is identified with a function $\psi: \U_{\!{}_{g,J}\!}(M) \ra \eS_{2n}$ such that $$\psi(uA)=\k_{2n}\big(F_+(A^{-1})\big)\psi(u) \quad \text{ for every } A \in \U_n \,\, ;$$

-- $\widehat{X}$ is the horizontal lift of $X$ on $T\U_{\!{}_{g,J}\!}(M)$ determined by the connection form $\w^{{}^{{}_{\cC}}}$;

-- $d^{{}^{{}_{\,\cC}}}\!$ is the exterior covariant derivative on $\U_{\!{}_{g,J}\!}(M)$ determined by $\w^{{}^{{}_{\cC}}}$.

Using \eqref{Dpsi} and the fact that the differential of $F_+$ is just $$(F_+)_*: \gu_n \ra \so_{2n} \oplus i\bR \,\, , \quad (F_+)_*(A) = (\t_{2n})_*^{-1}(A)+\frac12\Tr(A) \,\, ,$$ we get that $$\Derc{\cC}_{X}\big(Y \!\bcdot\!\psi\big)= \big(\Derc{\cC}_{X}Y\big) \!\bcdot\!\psi + Y \!\bcdot\!\Derc{\cC}_{X}\psi \quad \text{ for every } X, Y \in \gX(M) ,\, \psi \in \gS^{{}^{\uparrow}}\!(M) \,\, .$$ Since the representation $\k_{2n}$ of $\Spin_{2n}^{\,\bC}$ is unitary, it follows that $\Derc{\cC}_{\color{white} X}\!h^{{}^\uparrow}=0$ and, furthermore, one can directly check that \eqref{skewH} holds. The same arguments, mutatis mutandis, determine a covariant derivative $\Derc{\cC}_{\color{white} X}\!\!$ on spinors $\f \in \gS^{{}^{\downarrow}}\!(M)$ which fulfill the required conditions. \end{proof}

\subsection{A fundamental example of Chern-Dirac bundle: the $\eV$-spinors}

Given an Hermitian $2n$-manifold $(M,g,J)$, with canonical and anticanonical spinor bundles $\eS^{{}^\uparrow}\!M$, $\eS^{{}^\downarrow}\!M$, we define as {\it $\eV$-spinor bundle of $M$} the vector bundle $$\eV M \= \eS^{{}^\downarrow}\!M \!\otimes\! \eS^{{}^\uparrow}\!M \ .$$ We remark that $\eV M$ is equipped with the Hermitian metric $\check{h} \= h^{{}^\downarrow} \!\otimes\! h^{{}^\uparrow}$, where $h^{{}^\downarrow}$ and $h^{{}^\uparrow}$ are defined in \eqref{huphdown}, and with the bigradation given by the subbundles $$\eV^{p,q}M \= \eS^{{}^\downarrow n-p}M \!\otimes\! \eS^{{}^\uparrow q}M \simeq \eS^{{}^\uparrow *p}M \!\otimes\! \eS^{{}^\uparrow q}M \ , \quad 0 \leq p,q \leq n \ .$$ Note that, since $\eS^{{}^\uparrow 0}M$ and $\eS^{{}^\downarrow n}M$ are trivial, the subbundles $\eV^{\,\bcdot,0}M$ and $\eV^{\,0, \bcdot}M$ are isomorphic to $\eS^{{}^\downarrow}\!M$ and $\eS^{{}^\uparrow}\!M$, respectively, so that $\eV M$ can be considered as a bundle which naturally includes both the canonical and anticanonical spinor bundle. Let us denote the space of global sections of $\eV M$ by $\gV(M)$. \smallskip

From the proof of Proposition \ref{SeCD}, we get that the Chern connection defines a covariant derivative along vector fields of $M$ of the sections of $\eV M$ $$\Derc{\cC}_{\color{white} X}\!\!: \gX(M) \otimes \gV(M) \ra \gV(M) \, .$$

Let $\f^{\zero}$, $\psi^{\zero}$ be the global sections in \eqref{fipsi}, satisfying \eqref{huphdown}, and $\x^{\zero}$ the distinguished section $\x^{\zero} \= \f^{\zero} \otimes \psi^{\zero} \in \gV(M)$. Clearly $\x^{\zero}$ is a nowhere vanishing global section of $\eV^{0,0}M$ such that $\check{h}(\x^{\zero},\x^{\zero})=1$. Note that the action of $\Derc{\cC}_{\color{white} X}\!\!$ on $\gS^{{}^{\uparrow}0}(M) \simeq \Cinf(M;\bC)$ coincides with the Chern covariant derivative of the trivial holomorphic Hermitian line bundle $\big(\eS^{{}^{\uparrow}0}M,\, h^{{}^{\uparrow}}\big|_{\eS^{{}^{\uparrow}0}M \otimes \eS^{{}^{\uparrow}0}M},\psi^{\zero}\big)$ and therefore correspond to the trivial covariant derivative on this bundle. The same holds on $\gS^{{}^{\downarrow}n}(M) \simeq \Cinf(M;\bC)$. So, it follows that the action of $\Derc{\cC}_{\color{white} X}\!\!$ on sections of $\eV^{0,0}M$ is trivial too. \smallskip

The interest for the bundle of $\eV$-spinors comes from the fact that there are {\it two structures of complex left $\Cl^{\, \bC}\!M$-module on $\eV M$ which make it is a Chern-Dirac bundle isomorphic with the bundle of complex forms of $M$.} To see this, let \beq \bcdot_{\!\!{}_L} \ , \ \bcdot_{\!\!{}_R} : \Cl^{\, \bC}\!M \!\otimes\! \eV M \ra \eV M \label{Cliffmult} \eeq be the unique $\bC$-linear operations which transform each pair, given by an element $w \in \Cl_x^{\,\bC}M$ and a homogeneous decomposable $\eV$-spinor $\f {\otimes} \psi \in \eV^{p,q}_xM$ at some $x \in M$, into the $\eV$-spinors $$w \!\bcdot_{\!\!{}_L} (\f \!\otimes\! \psi) \!\=\! (w \!\bcdot\! \f) \!\otimes\! \psi \ , \quad w \!\bcdot_{\!\!{}_R} (\f \!\otimes\! \psi) \!\=\! ({-}1)^p\f \!\otimes\! (w \!\bcdot\! \psi) \ .$$ As mentioned above, the following properties hold.

\begin{prop} \hfil \par \begin{itemize}
\item[i)] The bundle $\eV M$ is a Chern-Dirac bundle with respect to both \eqref{Cliffmult}. 
\item[ii)] There exists an isometry $\vars: \L^{\bcdot}(T^{*\bC}M) \ra \eV M$ such that $\Derc{\cC}_X \circ \vars = \vars \circ \Derc{\cC}_X$ for every $X \in \gX(M)$. \end{itemize} \label{propVM} \end{prop}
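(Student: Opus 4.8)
The strategy is to build the isometry $\vars$ by tensoring the two isometries already produced in Proposition~\ref{SeCD}. Recall we have the identifications $\eS^{{}^\uparrow}\!M \simeq \L^{0,\bcdot}(T^*M)$ via $\a^{{}^\uparrow}$ and $\eS^{{}^\downarrow}\!M \simeq \L^{\bcdot,0}(T^*M)$ via $\b^{{}^\downarrow}$, both of which are isometric and intertwine the Chern covariant derivative with the natural one on forms. Since $\eV M = \eS^{{}^\downarrow}\!M \otimes \eS^{{}^\uparrow}\!M$ and $\L^{\bcdot}(T^{*\bC}M) = \L^{\bcdot,0}(T^*M) \otimes \L^{0,\bcdot}(T^*M)$ (the wedge product gives an isomorphism $\L^{p,0}\otimes\L^{0,q}\to\L^{p,q}$, an isometry up to a fixed combinatorial constant that I will absorb into the normalization of the metric, exactly as in \eqref{ab}), I define $\vars \= \b^{{}^\downarrow} \otimes \a^{{}^\uparrow}$ composed with the wedge isomorphism. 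The metric $\check h = h^{{}^\downarrow}\otimes h^{{}^\uparrow}$ is by construction the tensor product metric, so $\vars$ is automatically an isometry onto its image, and the fact that both $\b^{{}^\downarrow}$ and $\a^{{}^\uparrow}$ are parallel for $\Derc{\cC}$ (established in the proof of Proposition~\ref{SeCD}) immediately gives $\Derc{\cC}_X\circ\vars = \vars\circ\Derc{\cC}_X$ by the Leibniz rule on the tensor product connection.

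For part~(i), I must verify Definition~\ref{CDbundles} for each of $\bcdot_{\!\!{}_L}$ and $\bcdot_{\!\!{}_R}$. The metric $\check h$ is already Chern-parallel (it is a tensor product of two Chern-parallel metrics). The module axiom — that $\Cl^{\,\bC}\!M$ acts by a genuine left module structure — is clear for $\bcdot_{\!\!{}_L}$ from the left $\Cl^{\,\bC}\!M$-module structure on $\eS^{{}^\downarrow}\!M$; for $\bcdot_{\!\!{}_R}$ one must check that the sign $(-1)^p$ on $\eV^{p,q}$ makes $w\mapsto c_R(w)$ a homomorphism, which follows because Clifford multiplication by a vector shifts $p\mapsto p\pm1$ on $\eS^{{}^\uparrow}\!M$ (and hence flips $(-1)^p$), so the sign twist turns the ``wrong-sign'' anti-homomorphism coming from acting on the right factor into an honest homomorphism — this is the standard Morita-type trick. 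The skew-adjointness \eqref{skewH}: for $\bcdot_{\!\!{}_L}$ it is inherited directly from the corresponding property of $\eS^{{}^\downarrow}\!M$; for $\bcdot_{\!\!{}_R}$ the sign $(-1)^p$ has absolute value one so it does not affect the Hermitian pairing, and the property transfers from $\eS^{{}^\uparrow}\!M$. Finally, compatibility \eqref{derE} between $\Derc{\cC}$ and the Clifford multiplication: for $\bcdot_{\!\!{}_L}$ this is the identity $\Derc{\cC}_X(w\bcdot\f) = (\Derc{\cC}_X w)\bcdot\f + w\bcdot\Derc{\cC}_X\f$ on $\eS^{{}^\downarrow}\!M$ tensored trivially with $\eS^{{}^\uparrow}\!M$, and analogously for $\bcdot_{\!\!{}_R}$ (the integer $p$ is locally constant on each homogeneous piece, so $\Derc{\cC}$ does not interact with the sign).

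The only genuinely delicate point is bookkeeping the combinatorial normalization constants so that $\vars$ is an \emph{isometry} and not merely a bundle isomorphism: the wedge map $\L^{p,0}\otimes\L^{0,q}\to\L^{p,q}$ carries a factor depending on $p,q$ relative to the product of the inner products used in \eqref{ab}, and one must choose the Hermitian metric on $\L^{\bcdot}(T^{*\bC}M)$ (equivalently, rescale $\vars$ degree-by-degree) consistently with the powers of $2$ already appearing in the definitions \eqref{ab} of $\a^k$ and $\b^k$. I expect this to be the main (though purely routine) obstacle; once the normalization is fixed, everything else is a formal consequence of Proposition~\ref{SeCD} and the definitions of $\bcdot_{\!\!{}_L}$, $\bcdot_{\!\!{}_R}$, $\check h$, and the tensor product connection.
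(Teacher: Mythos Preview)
Your overall architecture is exactly the paper's: for (ii) you take $\vars^{p,q}=(\b^{{}^\downarrow p}\otimes\a^{{}^\uparrow q})\circ(\jmath^{p,q})^{-1}$, where $\jmath^{p,q}$ is the wedge isomorphism, and then verify parallelism via the Leibniz rule. One correction: the fact that $\a^{{}^\uparrow}$ and $\b^{{}^\downarrow}$ intertwine $\Derc{\cC}$ is \emph{not} contained in the proof of Proposition~\ref{SeCD}; what that proof gives you is only the Leibniz identity $\Derc{\cC}_X(w\!\bcdot\!\psi)=(\Derc{\cC}_Xw)\!\bcdot\!\psi+w\!\bcdot\!\Derc{\cC}_X\psi$. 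To get parallelism of $\vars$ you need in addition $\Derc{\cC}_X\psi^{\zero}=\Derc{\cC}_X\f^{\zero}=0$, which the paper argues separately (just before the proposition) by identifying the Chern connection on the trivial Hermitian line bundles $\eS^{{}^\uparrow 0}M$, $\eS^{{}^\downarrow n}M$ with the trivial connection. You should cite that fact explicitly.

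For (i) your argument for $\bcdot_{\!\!{}_R}$ is confused. Clifford multiplication by a vector on the $\eS^{{}^\uparrow}$ factor shifts the index $q$, not $p$; the bidegree is $\eV^{p,q}M=\eS^{{}^\downarrow n-p}M\otimes\eS^{{}^\uparrow q}M$, and $\bcdot_{\!\!{}_R}$ leaves the first factor untouched, so the sign $(-1)^p$ does \emph{not} flip under successive applications of $c_R$. There is also no ``wrong-sign anti-homomorphism'' to fix: $\eS^{{}^\uparrow}M$ is already a \emph{left} $\Cl^{\,\bC}\!M$-module, so $v\mapsto\big(\f\otimes\psi\mapsto(-1)^p\f\otimes(v\!\bcdot\!\psi)\big)$ satisfies the Clifford relation simply because the sign appears twice and cancels, giving $c_R(v)^2=-|v|^2\Id$; the universal property of the Clifford algebra then yields the module structure. (The purpose of the $(-1)^p$ is rather to make $c_L(v)$ and $c_R(w)$ anticommute, since $\bcdot_{\!\!{}_L}$ by a vector \emph{does} shift $p$; this is what eventually makes $\slh{D}^{(L)}+\slh{D}^{(R)}$ correspond to $\sqrt2(d+d^*)$.) Your remark on skew-adjointness is essentially right but needs the observation that the subbundles $\eS^{{}^\downarrow n-p}M$ are mutually $h^{{}^\downarrow}$-orthogonal (eigenspaces of $\w\!\bcdot\!$ for distinct eigenvalues), so that $(-1)^{p_1}=(-1)^{p_2}$ whenever the pairing is nonzero.
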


\begin{proof} The first claim follows directly from Proposition \ref{SeCD}. For the second one, let $$\jmath^{\, p,q}: \Big(\L^{p,0}(T^*M) \!\otimes\! \eS^{{}^\downarrow n}M\Big) \!\otimes\! \Big(\L^{0,q}(T^*M) \!\otimes\! \eS^{{}^\uparrow 0}M\Big) \ra \L^{p,q}(T^*M) \!\otimes\! \eV^{0,0}M$$ be the vector bundle isomorphism which transforms decomposable elements into $$(\jmath^{\, p,q})_x \big((\nu \!\otimes\! \f^{\zero}_x) \!\otimes\! (\overline{\mu} \!\otimes\! \psi^{\zero}_x)\big) \= (\nu \wedge \overline{\mu}) \!\otimes\! \x^{\zero}_x \ , \quad x \in M \ .$$ Let also \beq \vars^{p,q}: \L^{p,q}(T^*M) \simeq \L^{p,q}(T^*M) \!\otimes\! \eV^{0,0}M \ra \eV^{p,q}M \ , \quad \vars^{p,q} \= \big(\b^{{}^\downarrow p} \!\otimes\! \a^{{}^\uparrow q}\big) \circ (\jmath^{\, p,q})^{-1} \ , \label{ropq} \eeq where $\a^{{}^\uparrow q}$ and $\b^{{}^\downarrow p}$ are the isometries defined in \eqref{abupdown}. By construction, each isomorphism $\vars^{p,q}$ is actually an isometry of Hermitian bundles and they all combine into a global isometry $$\vars: \L^{\bcdot}(T^{*\bC}M) \simeq \L^{\bcdot}(T^{*\bC}M) \otimes \eV^{0,0}M \ra \eV M \ .$$ The fact that $\Derc{\cC}_X \circ \vars = \vars \circ \Derc{\cC}_X$ follows from the previously mentioned property $\Derc{\cC}_{\color{white}X}\!\!\psi^{\zero}=\Derc{\cC}_{\color{white}X}\!\!\f^{\zero}=0$, which yields that for every $\h= \nu \wedge \overline{\mu} \in \W^{p,q}(M)$ and $X \in \gX(M)$ we have \begin{align*}
\Derc{\cC}_X \vars^{p,q}(\h) &= \frac1{2^{\frac{p+q}2}} \Derc{\cC}_X \big((\nu \!\bcdot\! \f^{\zero}) \!\otimes\! (\overline{\mu} \!\bcdot\! \psi^{\zero})\big) \\
&=\frac1{2^{\frac{p+q}2}} \big((\Derc{\cC}_X\nu \!\bcdot\! \f^{\zero}) \!\otimes\! (\overline{\mu} \!\bcdot\! \psi^{\zero}) + (\nu \!\bcdot\! \f^{\zero})\!\otimes\! (\Derc{\cC}_X\overline{\mu} \!\bcdot\! \psi^{\zero})\big) \\
&=\big(\b^{{}^\downarrow p} \!\otimes\! \a^{{}^\uparrow q}\big)\big((\Derc{\cC}_X\nu \!\otimes\! \f^{\zero}) \!\otimes\! (\overline{\mu} \!\otimes\! \psi^{\zero}) + (\nu \!\otimes\! \f^{\zero})\!\otimes\! (\Derc{\cC}_X\overline{\mu} \!\otimes\! \psi^{\zero})\big) \\
&=\big(\b^{{}^\downarrow p} \!\otimes\! \a^{{}^\uparrow q}\big)\Big((\jmath^{\, p,q})^{-1}\big((\Derc{\cC}_X\nu \wedge \overline{\mu}) \!\otimes\! \x^{\zero} + (\nu \wedge \Derc{\cC}_X\overline{\mu}) \!\otimes\! \x^{\zero}\big)\Big) \\
&=\vars^{p,q}\big(\Derc{\cC}_X\h\big)
\end{align*} and this completes the proof. \end{proof}

The components $\vars^{p,q}$ of the isometry $\vars$ defined in \eqref{ropq} play an important role in the following discussion. It is therefore convenient to introduce the notation $$\widehat{\bcdot} \big|_{\L^{p,q}(T^*M) \otimes \eV^{0,0}M} \= \Big(\!\big(\!\bcdot\! |_{\L^{p,0}(T^*M) \otimes \eS^{{}^\downarrow n}M}\big) \!\otimes\! \big(\!\bcdot\! |_{\L^{0,q}(T^*M) \otimes \eS^{{}^\uparrow 0}M}\big)\!\Big) \circ (\jmath^{\, p,q})^{-1}\, ,$$ which allows to shortly indicate the map $\vars^{p,q}$ as \beq \vars^{p,q}(\h) = \frac1{2^{\frac{p+q}2}} \h \widehat{\bcdot} \xi^{\zero} \quad \text{ for every } \h \in \W^{p,q}(M)\,\, . \label{notwedge} \eeq

\subsection{The algebraic structure of $\eV$-spinors}

Let $(M,g,J)$ be an Hermitian $2n$-manifold and $\Cl^{\,\bC}\!M$ its complex Clifford bundle. Since $\Cl^{\, \bC}_{2n} \simeq \gg\gl(\eS_{2n}) = \cM_{2^n}(\bC)$, there exists a $\bC$-linear isomorphism \beq \c: \eV M \ra \Cl^{\,\bC}\!M \simeq \gg\gl(\eS^{{}^\uparrow}\!M) \, ,\label{chi} \eeq which maps each decomposable $\eV$-spinor $\f {\otimes} \psi \in \eV M$ into the unique element $w\= \c(\f {\otimes} \psi) \in \Cl^{\, \bC}\!M$ defined by the condition $$w \!\bcdot\! \psi' = \f(\psi')\psi \quad \text{ for every } \psi' \in \eS^{{}^\uparrow}\!M \,\, .$$ This gives rise to the following $\bC$-linear isomorphism between $\L^{\bcdot}(T^{*\bC}M)$ and $\Cl^{\,\bC}\!M$: \beq \c \circ \vars : \L^{\bcdot}(T^{*\bC}M) \ra \Cl^{\,\bC}\!M \, . \label{isoVM} \eeq The reader should nonetheless be aware that {\it such map is in general different from the canonical isomorphism $\Cl^{\,\bC}\!M \simeq \L^{\bcdot}(T^{\bC}M) \overset{g}{\simeq} \L^{\bcdot}(T^{*\bC}M)$ described in \eqref{isoGrCl}}. Furthermore, the isomorphism \eqref{isoVM} induces a bigradation on $\Cl^{\,\bC}\!M$, defined by \beq \Cl^{p,q}\!M \= \c(\eV^{p,q}M) = (\c \!\circ\! \vars)\big(\L^{p,q}(T^*M)\big) \,\, , \label{Clpq} \eeq which is different from the one considered by Michelsohn in \cite{Mi}, \S 2.B. \smallskip

We can finally define a structure of bundle of algebras on $\eV M$ setting $$\x_1 \cdot \x_2 \= \c^{-1}\big(\c(\x_1) \cdot \c(\x_2)\big) \quad \text{ for every } \x_1 , \x_2 \in \eV M \,\, .$$ Note that if $\x_1= \f_1 \!\otimes\! \psi_1$ and $\x_2= \f_2 \!\otimes\! \psi_2$ are homogeneous decomposable $\eV$-spinors, with $\f_j = \nu_j \!\bcdot\! \f^{\zero} \in \eS^{{}^\downarrow n-p_j}M$ and $\psi_j = \overline{\mu}_j \!\bcdot\! \psi^{\zero} \in \eS^{{}^\uparrow q_j}M$, from \eqref{isodelta} and \eqref{ropq} it follows that such product is nothing but $$\x_1 \cdot \x_2 = (\f_1 \!\otimes\! \psi_1) \cdot (\f_2 \!\otimes\! \psi_2) = \f_1(\psi_2) (\f_2 \!\otimes\! \psi_1) = 2^{\frac{p_2+q_1}2}g(\nu_1, \overline{\mu}_2) \,\vars(\nu_2 \wedge \overline{\mu}_1) \ .$$

\subsection{Partial Chern-Dirac operators on $\eV$-spinors}

Let $(M,g,J)$ be an Hermitian $2n$-manifold. The main result of this section consists in the proof that the partial Chern-Dirac operators on $\eV M$ correspond to the standard operators $\p$, $\p^*$ and $\bar{\p}^*$, $\bar{\p}$ on differential forms. \smallskip

Let us indicate with $\Diracde\!'{}^{{}^{(L)}}$, $\Diracde\!''{}^{{}^{(L)}}$ and $\Diracde\!'{}^{{}^{(R)}}$, $\Diracde\!''{}^{{}^{(R)}}$ the partial Chern-Dirac operators on sections of $\eV M$ determined in \eqref{pCD} using the Clifford multiplications $c= \bcdot_{\!\!{}_L}$ and $c= \bcdot_{\!\!{}_R}$, respectively. In full analogy with Theorem 4.1 in \cite{Mi}, we prove the following

\begin{theorem} The operators $\Diracde\!'{}^{{}^{(L)}}\!\!$ and $\Diracde\!''{}^{{}^{(L)}}\!\!$ (resp. $\Diracde\!'{}^{{}^{(R)}}\!\!$ and $\Diracde\!''{}^{{}^{(R)}}\!$) are formal adjoint one of the another and their squares satisfy \beq \Big(\Diracde\!'{}^{{}^{(L)}}\Big)^2 = 0 = \Big(\Diracde\!''{}^{{}^{(L)}}\Big)^2 \, , \,\,\quad\,\, \Big(\Diracde\!'{}^{{}^{(R)}}\Big)^2 = 0 = \Big(\Diracde\!''{}^{{}^{(R)}}\Big)^2 \,\, .\label{zerosquare} \eeq Moreover, the cochain complexes \beq \begin{split} \gV^{0,q}(M) \overset{\Diracde\!'{}^{{}^{(L)}}}{\xrightarrow{\hspace*{30pt}}} \gV^{1,q}(M) \overset{\Diracde\!'{}^{{}^{(L)}}}{\xrightarrow{\hspace*{30pt}}} \,\cdots\, \overset{\Diracde\!'{}^{{}^{(L)}}}{\xrightarrow{\hspace*{30pt}}} \gV^{n,q}(M) \, , \\ \gV^{p,0}(M) \overset{\Diracde\!''{}^{{}^{(R)}}}{\xrightarrow{\hspace*{30pt}}} \gV^{p,1}(M) \overset{\Diracde\!''{}^{{}^{(R)}}}{\xrightarrow{\hspace*{30pt}}} \,\cdots\, \overset{\Diracde\!''{}^{{}^{(R)}}}{\xrightarrow{\hspace*{30pt}}} \gV^{p,n}(M) \, \phantom{,} \end{split} \label{complexes} \eeq are elliptic. \end{theorem}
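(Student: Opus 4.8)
The plan is to reduce all three assertions, via the isometry $\vars$ of Proposition \ref{propVM}, to classical facts about $\p$ and $\bar\p$, combining them with the general results already established for arbitrary Chern-Dirac bundles.

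\emph{Step 1: identification of the operators.} The core of the argument is to show that, under $\vars$,
\[
\Diracde\!'{}^{{}^{(L)}}\circ\vars = \sqrt2\,\vars\circ\p\,,\qquad
\Diracde\!''{}^{{}^{(R)}}\circ\vars = \sqrt2\,\vars\circ\bar\p\,,
\]
and, by taking formal adjoints and using that $\vars$ is an isometry together with Proposition \ref{adjoint},
\[
\Diracde\!''{}^{{}^{(L)}}\circ\vars = \sqrt2\,\vars\circ\p^*\,,\qquad
\Diracde\!'{}^{{}^{(R)}}\circ\vars = \sqrt2\,\vars\circ\bar\p^*\,.
\]
To prove the first identity, fix $\h\in\W^{p,q}(M)$ and a local unitary frame, and evaluate $\Diracde\!'{}^{{}^{(L)}}\big(\vars^{p,q}\h\big)$ pointwise. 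Since $D$ is the Chern covariant derivative on $\eV M$, Proposition \ref{propVM}(ii) yields $D_{\e_j}\vars^{p,q}\h=\vars^{p,q}\big(\Derc{\cC}_{\e_j}\h\big)$; by \eqref{prod1form}, Clifford multiplication by $\overline{\e}_j$ on the $\eS^{{}^\downarrow}\!M$ factor is exterior multiplication by $\e^j$ (which raises the holomorphic degree), and the normalization \eqref{notwedge} then shows that the first-order part $\sum_j\overline{\e}_j\bcdot_{\!\!{}_L}\!D_{\e_j}\vars^{p,q}\h$ equals $\sqrt2\,\vars^{p+1,q}\big(\sum_j\e^j\wedge\Derc{\cC}_{\e_j}\h\big)$, while the zero-order torsion term in \eqref{pCD} becomes $\sqrt2\,\vars^{p+1,q}$ applied to the corresponding contraction of $\h$ with the torsion $T$ of $\Derc{\cC}$. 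It remains to invoke the elementary identity
\[
\p\h=\sum_j\e^j\wedge\Derc{\cC}_{\e_j}\h+(\text{contraction of }\h\text{ with the torsion of }\Derc{\cC})\,,
\]
valid because $\p$ is the $(p+1,q)$-component of $d$ and $d$ differs from $\sum_\mu e^\mu\wedge\Derc{\cC}_{e_\mu}$ precisely by the torsion of $\Derc{\cC}$; the coefficient $-\tfrac12$ in the definition \eqref{pCD} is exactly the one making the two torsion contributions agree. The computation for $\Diracde\!''{}^{{}^{(R)}}$ is the same, interchanging the roles of $\e_j,\overline{\e}_j$ and of $\f^{\zero},\psi^{\zero}$.

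\emph{Step 2: conclusions.} Formal adjointness of $\Diracde\!'{}^{{}^{(L)}}$ and $\Diracde\!''{}^{{}^{(L)}}$ (and of $\Diracde\!'{}^{{}^{(R)}}$ and $\Diracde\!''{}^{{}^{(R)}}$) is exactly the content of Proposition \ref{adjoint}, and is consistent with Step 1 since $\p,\p^*$ and $\bar\p,\bar\p^*$ are formal adjoints. For \eqref{zerosquare}, Step 1 gives $\big(\Diracde\!'{}^{{}^{(L)}}\big)^2\circ\vars=2\,\vars\circ\p^2=0$ and likewise for the other three operators; equivalently, one may argue directly from Theorem \ref{squarepCD} that $\big(\Diracde\!'{}^{{}^{(L)}}\big)^2=\cR^{2,0}$ and that $\cR^{2,0}=\cR^{0,2}=0$, because the curvature of the Chern connection is a $\gu_n$-valued form of type $(1,1)$, so its $(2,0)$- and $(0,2)$-parts vanish on every bundle associated with $\U_{\!{}_{g,J}\!}(M)$, in particular on $\eV M$. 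Finally, via $\vars$ the two cochain complexes in \eqref{complexes} are isomorphic, up to the harmless factor $\sqrt2$, to $\big(\W^{\bcdot,q}(M),\p\big)$ and to the Dolbeault complex $\big(\W^{p,\bcdot}(M),\bar\p\big)$, respectively; both are classical elliptic complexes, since the symbol of $\p$ (resp. $\bar\p$) at a nonzero covector $\x$ is exterior multiplication by its nonvanishing component $\x^{1,0}$ (resp. $\x^{0,1}$), giving an exact Koszul sequence in each degree. Hence the complexes in \eqref{complexes} are elliptic.

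\emph{Main obstacle.} The delicate point is the identification in Step 1: keeping careful track of the powers of $2$ coming from \eqref{vettu}, \eqref{ab}, \eqref{abupdown} and \eqref{notwedge}, and verifying that the zero-order torsion term built into \eqref{pCD} reproduces exactly the torsion defect of the Chern connection in the formula for $\p$ (resp. $\bar\p$). Once this bookkeeping is done, the three assertions follow immediately; alternatively, adjointness and the vanishing of the squares can be obtained intrinsically from Proposition \ref{adjoint} and Theorem \ref{squarepCD} as indicated above, so that Step 1 is strictly needed only for the most efficient treatment of ellipticity.
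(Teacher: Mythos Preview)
Your proposal is correct, and for the first two assertions (formal adjointness and vanishing of squares) you proceed exactly as the paper does: adjointness is read off Proposition \ref{adjoint}, and the squares vanish by Theorem \ref{squarepCD} together with the fact that the Chern curvature has type $(1,1)$, so $\cR^{2,0}=\cR^{0,2}=0$ on any associated bundle.

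Where you diverge from the paper is in the treatment of the bidegree shift and of ellipticity. You first establish the identification $\Diracde\!'{}^{{}^{(L)}}\circ\vars=\sqrt2\,\vars\circ\p$ (and its companions), which is precisely the content of the \emph{subsequent} Theorem \ref{isopartCD}, and then reduce \eqref{complexes} to the classical ellipticity of the Dolbeault complexes. The paper instead works intrinsically: it proves $\Diracde\!'{}^{{}^{(L)}}(\gV^{p,q})\subset\gV^{p+1,q}$ by an eigenvalue computation with the K\"ahler form, using the commutation relations $\w\!\cdot\!\overline{\e}_j=\overline{\e}_j\!\cdot\!\w-2i\,\overline{\e}_j$ and $\Derc{\cC}\w=0$, and it checks ellipticity by computing the principal symbols directly and invoking $(\l^{\sharp})^{10}\!\cdot\!(\l^{\sharp})^{01}+(\l^{\sharp})^{01}\!\cdot\!(\l^{\sharp})^{10}=-g(\l,\l)$. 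Your route is perfectly legitimate (there is no circularity, since the proof of Theorem \ref{isopartCD} does not rely on the present statement) and is arguably more conceptual, but it front-loads the bookkeeping you flag as the ``main obstacle'' and effectively reorders the exposition. The paper's intrinsic argument has the advantage of being self-contained at this point and of isolating the symbol computation in Clifford-algebraic terms, which is reusable for other Chern-Dirac bundles.
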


\begin{proof} The first claim follows directly from Proposition \ref{adjoint}, while \eqref{zerosquare} follows from \eqref{Rsquare} and properties of the curvature of Chern connection. In order to prove that $\Diracde\!'{}^{{}^{(L)}}\!\big(\gV^{p,q}(M)\big) \subset \gV^{p+1,q}(M)$, consider a unitary frame field defined in an open subset $\cU \subset M$ and the associated normalized complex frame $(\e_j, \overline{\e}_j)$ defined in $\eqref{vettu}$. One can directly check that \begin{itemize}
\item[i)] $\displaystyle \sum_j \w \!\cdot\! \overline{\e}_j = \sum_j (\overline{\e}_j \!\cdot\! \w -2i\, \overline{\e}_j)$ , \\
\item[ii)] $\displaystyle \sum_{r<s} \w \!\cdot\! \overline{\e}_r \!\cdot\! \overline{\e}_s \!\cdot\! T_{rs} = \sum_{r<s} (\overline{\e}_r \!\cdot\! \overline{\e}_s \!\cdot\! T_{rs} \!\cdot\! \w - 2i\, \overline{\e}_r \!\cdot\! \overline{\e}_s \!\cdot\! T_{rs})$ .
\end{itemize}
Fix now a homogeneous section $\x \in \gV^{p,q}(M)$. Since $\Derc{\cC}_{\color{white} X}\!\!\w=0$, from $(i)$ and $(ii)$ we obtain \begin{align*}
\w \bcdot_{\!\!{}_L} \Diracde\!'{}^{{}^{(L)}}\x &= \sum_j (\w \!\cdot\! \overline{\e}_j)\bcdot_{\!\!{}_L} \Derc{\cC}_{\e_j}\x - \frac12\sum_{r<s}(\w \!\cdot\! \overline{\e}_r \!\cdot\! \overline{\e}_s \!\cdot\! T_{rs}) \bcdot_{\!\!{}_L} \x \\
&=(n-2p-2)i \sum_j \overline{\e}_j \bcdot_{\!\!{}_L} \Derc{\cC}_{\e_j}\x -\frac12 (n-2p-2)i \sum_{r<s} (\overline{\e}_r \!\cdot\! \overline{\e}_s \!\cdot\! T_{rs}) \bcdot_{\!\!{}_L} \x \\
&=(n-2(p+1))i\,\Diracde\!'{}^{{}^{(L)}}\x \, ,
\end{align*} and then it is immediate to check that $\w \bcdot_{\!\!{}_R} \Diracde\!'{}^{{}^{(L)}}\x = (-1)^{p+1}(2q-n)i\,\Diracde\!'{}^{{}^{(L)}}\x$. One can show in a similar way the inclusion $\Diracde\!''{}^{{}^{(R)}}\!\big(\gV^{p,q}(M)\big) \subset \gV^{p,q+1}(M)$. It remains to prove that \eqref{complexes} are elliptic. To see this, consider a real covector $\l \in TM$ and observe that the principal symbols of the four partial Chern-Dirac operators are $$\s\Big(\Diracde\!'{}^{{}^{(L)}}\Big)(\l) = \big(\l^{\sharp}\big)^{10}\bcdot_{\!\!{}_L} \,\, , \quad \s\Big(\Diracde\!''{}^{{}^{(L)}}\Big)(\l) = \big(\l^{\sharp}\big)^{01}\bcdot_{\!\!{}_L} \,\, ,$$ $$\s\Big(\Diracde\!''{}^{{}^{(R)}}\Big)(\l) = \big(\l^{\sharp}\big)^{01}\bcdot_{\!\!{}_R} \,\, , \quad \s\Big(\Diracde\!'{}^{{}^{(R)}}\Big)(\l) = \big(\l^{\sharp}\big)^{10}\bcdot_{\!\!{}_R} \,\, .$$ Ellipticity follows now from the relation $$\big(\l^{\sharp}\big)^{10}\!\cdot\!\big(\l^{\sharp}\big)^{01}+\big(\l^{\sharp}\big)^{01}\!\cdot\!\big(\l^{\sharp}\big)^{10} = -g(\l,\l)$$ and this completes the proof. \end{proof}

The next theorem establishes a crucial relation between the partial Chern-Dirac operators on $\eV$-spinors and the standard operators $\p$, $\p^*$ and $\bar{\p}^*$, $\bar{\p}$ on differential forms. One can immediately realize that this relation is much simpler than the analogous result on the operators $\eD$, $\overline{\eD}$,  considered by Michelsohn for Clifford bundles on K\"ahler manifolds (see \cite{Mi}, Proposition 5.1).

\begin{theorem} Let $\vars: \L^{\bcdot}(T^{*\bC}M) \ra \eV M$ be the isometry defined in Proposition \ref{propVM}. Then the partial Chern-Dirac operators verify \begin{align*}
\vars^{-1} \!\circ\! \Diracde\!'{}^{{}^{(L)}} \!\circ\! \vars = \sqrt2 \partial \ &, \quad \vars^{-1} \!\circ\! \Diracde\!''{}^{{}^{(L)}} \!\circ\! \vars = \sqrt2 \partial^* \ , \\
\vars^{-1} \!\circ\! \Diracde\!'{}^{{}^{(R)}} \!\circ\! \vars = \sqrt2 \bar{\partial}^* \ &, \quad \vars^{-1} \!\circ\! \Diracde\!''{}^{{}^{(R)}} \!\circ\! \vars = \sqrt2 \bar{\partial} \,\, . \end{align*} \label{isopartCD} \end{theorem}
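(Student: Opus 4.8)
The strategy is to verify each of the four identities by a direct local computation, reducing everything to the fundamental formula \eqref{prod1form} from Lemma~3.4 and the definition \eqref{notwedge} of $\vars^{p,q}$. Since all operators involved are first order and the claim is linear over $\bC$, it suffices to test the identities on a homogeneous form $\h = \nu \wedge \overline{\mu} \in \W^{p,q}(M)$ written in terms of a local unitary coframe, and to work at a point $x$ where we may choose the frame so that $\Derc{\cC}_{\e_j}\e_k|_x = \Derc{\cC}_{\overline{\e}_j}\e_k|_x = 0$ (a Chern-normal frame). The key bookkeeping fact is that, under $\vars$, the Clifford action $\bcdot_{\!\!{}_L}$ on $\eV M$ translates into the ``wedge minus contraction'' operation on the left (canonical-spinor) tensor factor, so $\overline{\e}_j \bcdot_{\!\!{}_L}$ becomes $\overline{\e}^j \wedge (\cdot) - 2\,(\e_j \lrcorner\,(\cdot))$ after accounting for the normalization constants in \eqref{ab} and \eqref{ropq}, while $\e_j \bcdot_{\!\!{}_L}$ becomes essentially $\e^j\wedge(\cdot) - 2(\overline{\e}_j\lrcorner(\cdot))$; analogous statements hold for $\bcdot_{\!\!{}_R}$ acting on the right (anticanonical-spinor) factor.

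\textbf{The operator $\Diracde\!'{}^{{}^{(L)}}$.} Conjugating the first formula in \eqref{pCD} by $\vars$ and using Proposition~\ref{propVM} (so that $D$ commutes with $\vars$), one gets
$$
\vars^{-1}\!\circ\Diracde\!'{}^{{}^{(L)}}\!\circ\vars \,\h = \sum_{j} (\overline{\e}^j \wedge \Derc{\cC}_{\e_j}\h) - \tfrac12\sum_{r<s}\big(\text{torsion terms}\big),
$$
where the normalization $2^{-(p+q)/2}$ from \eqref{notwedge} absorbs the factor $\tfrac12$ coming from the $-2$ in \eqref{prod1form}, leaving $\sqrt2$ out front. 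I then need to recognize the right-hand side as $\sqrt2\,\partial\h$. The standard local formula for $\partial$ on $(p,q)$-forms reads $\partial\h = \sum_j \e^j \wedge \Derc{\cC}_{\e_j}\h$ up to torsion corrections, because the Chern connection is the natural connection on $T^{10}M$; the torsion terms in $\Diracde\!'{}^{{}^{(L)}}$ must match precisely the torsion correction in the formula expressing $\partial$ via the Chern covariant derivative. This is where the specific choice of torsion terms in the definition \eqref{pCD} pays off, and verifying the match is the principal computation.

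\textbf{The remaining three identities.} For $\Diracde\!'{}^{{}^{(R)}}$ and $\Diracde\!''{}^{{}^{(R)}}$, the same mechanism applies with $\bcdot_{\!\!{}_R}$ acting on the right tensor factor; here the sign $(-1)^p$ in the definition of $\bcdot_{\!\!{}_R}$ combines with the parity bookkeeping so that wedge-type pieces drop out and contraction-type pieces survive, turning $\Diracde\!''{}^{{}^{(R)}}$ into $\sqrt2\,\bar\partial$ (which raises $q$) and $\Diracde\!'{}^{{}^{(R)}}$ into $\sqrt2\,\bar\partial^*$ (which lowers $p$). The two adjoint operators $\Diracde\!''{}^{{}^{(L)}}$ and $\Diracde\!'{}^{{}^{(R)}}$ can alternatively be obtained for free: by Proposition~\ref{adjoint}, $\Diracde\!''{}^{{}^{(L)}}$ is the formal adjoint of $\Diracde\!'{}^{{}^{(L)}}$, so once $\vars^{-1}\!\circ\Diracde\!'{}^{{}^{(L)}}\!\circ\vars = \sqrt2\,\partial$ is established and $\vars$ is known to be an isometry, taking adjoints with respect to the $L^2$ inner products gives $\vars^{-1}\!\circ\Diracde\!''{}^{{}^{(L)}}\!\circ\vars = \sqrt2\,\partial^*$ immediately; similarly $\vars^{-1}\!\circ\Diracde\!'{}^{{}^{(R)}}\!\circ\vars = \sqrt2\,\bar\partial^*$ follows from the $\bar\partial$ case. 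Thus only two genuine computations are needed, and they are mirror images of one another under complex conjugation exchanging the roles of $\eS^{{}^\uparrow}\!M$ and $\eS^{{}^\downarrow}\!M$.

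\textbf{Main obstacle.} The delicate point is the torsion bookkeeping: one must show that the half-sum of torsion terms $-\tfrac12\sum_{r<s}c(\overline{\e}_r\cdot\overline{\e}_s\cdot T_{rs})$ in \eqref{pCD}, after transport by $\vars$, is exactly the discrepancy between $\partial$ and the ``naive'' operator $\sum_j \e^j\wedge\Derc{\cC}_{\e_j}$. This amounts to re-deriving the classical identity $d = \sum e^j\wedge\nabla_{e_j} - (\text{torsion})$ for the Chern connection in holomorphic/antiholomorphic components, keeping careful track of the antisymmetrization implicit in the wedge product and the fact that $\overline{\e}_r\cdot\overline{\e}_s$ acts as $2\,\overline{\e}^r\wedge\overline{\e}^s$ on the canonical spinor after the contraction term is used up. Getting the combinatorial factors and signs right — particularly reconciling the $\tfrac12$ in \eqref{pCD} with the $\tfrac1{2^{(p+q)/2}}$ normalization and the $-2$ in Lemma~\ref{prod1form} — is the heart of the argument, even though each individual step is elementary.
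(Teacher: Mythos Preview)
Your overall strategy is the same as the paper's: a direct local computation using the Clifford identities \eqref{prod1form} on decomposable $(p,q)$-forms $\h=\nu\wedge\overline{\mu}$, followed by matching the torsion terms. The paper organizes the ``torsion bookkeeping'' you flag as the main obstacle into a separate preparatory lemma (Lemma~\ref{lemmade}), which gives explicit formulas such as $\partial\h|_x=\sum_j\big(\e^j\wedge\Derc{\cC}_{\e_j}\h+\e^j(T)\wedge(\e_j\lrcorner\h)\big)$ and the analogous ones for $\bar\partial,\partial^*,\bar\partial^*$; once these are in hand, the comparison with the Dirac side becomes mechanical. Your idea of deducing the two adjoint identities from Proposition~\ref{adjoint} once the $\partial$ and $\bar\partial$ cases are done is a legitimate shortcut the paper does not take --- it instead computes $\Diracde\!''{}^{{}^{(L)}}$ directly and declares the $(R)$ cases ``perfectly analogous.''

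There is, however, a genuine confusion in your sketch that would derail the computation if left uncorrected. You write that $\overline{\e}_j\bcdot_{\!\!{}_L}$ becomes ``$\overline{\e}^j\wedge(\cdot)-2\,\e_j\lrcorner(\cdot)$'' on forms. This is not right. The left Clifford multiplication acts only on the $\eS^{{}^\downarrow}$ factor, i.e.\ on $\nu\bcdot\f^{\zero}$ with $\nu\in\L^{p,0}$. The $1$-form corresponding to the vector $\overline{\e}_j$ via $g$ is $\e^j$, which is purely of type $(1,0)$; plugging $\l=\e^j$ into \eqref{prod1form} gives $\overline{\e}_j\cdot\nu\bcdot\f^{\zero}=(\e^j\wedge\nu)\bcdot\f^{\zero}$ with \emph{no} contraction term. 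Dually, $\e_j\bcdot_{\!\!{}_L}$ corresponds to $\l=\overline{\e}^j$, whose $(1,0)$ part vanishes, so $\e_j\cdot\nu\bcdot\f^{\zero}=-2(\e_j\lrcorner\nu)\bcdot\f^{\zero}$ with no wedge term. This clean separation --- $\overline{\e}_j\bcdot_{\!\!{}_L}$ is pure wedge, $\e_j\bcdot_{\!\!{}_L}$ is pure contraction --- is exactly what makes the identification with $\partial$ and $\partial^*$ come out without cross-terms, and it is why your displayed formula should read $\sum_j\e^j\wedge\Derc{\cC}_{\e_j}\h$ rather than $\sum_j\overline{\e}^j\wedge\Derc{\cC}_{\e_j}\h$. (Also, $\bar\partial^*$ lowers $q$, not $p$.) Once this is straightened out, and you expand the torsion piece $-\tfrac12\sum_{r<s}(\overline{\e}_r\cdot\overline{\e}_s\cdot T_{rs})\bcdot_{\!\!{}_L}$ using the same rule --- the factor $T_{rs}\in T^{01}M$ contracts, while $\overline{\e}_r,\overline{\e}_s$ wedge --- you will land on $\sum_j\e^j(T)\wedge(\e_j\lrcorner\h)$, matching Lemma~\ref{lemmade}. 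The Chern-normal frame is harmless but unnecessary: the paper works with an arbitrary unitary frame throughout.
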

For the proof, we need a preparatory lemma.

\begin{lemma} Let $x \!\in\! M$ and $\h \in \W^{p,q}(M)$. Then \begin{align*}
&\partial \h|_x = \sum_j \Big(\e^j \wedge \Derc{\cC}_{\e_j}\h +\e^j(T) \!\wedge\! \big(\e_j \lrcorner \h\big)\Big) \,\, , \\
&\bar{\partial}\h|_x = \sum_j \Big(\overline{\e}^j \wedge \Derc{\cC}_{\overline{\e}_j}\h +\overline{\e}^j(T) \!\wedge\! \big(\overline{\e}_j \lrcorner \h\big)\Big) \,\, , \\
&\partial^*\h|_x = -\sum_j \e_j \lrcorner \Derc{\cC}_{\overline{\e}_j}\h - \sum_{r<s} \e_r \lrcorner \e_s \lrcorner \big((T_{\bar{r}\bar{s}})^\flat \!\wedge\!\h\big) \,\, , \\
&\bar{\partial}^*\h|_x = -\sum_j \overline{\e}_j \lrcorner \Derc{\cC}_{\e_j}\h - \sum_{r<s} \overline{\e}_r \lrcorner \overline{\e}_s \lrcorner \big((T_{rs})^\flat \!\wedge\!\h\big) \,\, ,
\end{align*} where $(\e_j, \overline{\e}_j)$ is the usual normalized complex frame \eqref{vettu} determined by a unitary basis for $T_xM$. \label{lemmade} \end{lemma}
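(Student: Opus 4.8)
The strategy is to express the full exterior derivative $d$ through the Chern covariant derivative $\Derc{\cC}$ and then read off $\partial$ and $\bar{\partial}$ as its bidegree $(+1,0)$ and $(0,+1)$ components, obtaining $\partial^*$ and $\bar{\partial}^*$ afterwards by formal adjunction. I would start from the classical Koszul-type identity valid for an arbitrary linear connection $\nabla$ on $M$ with torsion $T^\nabla$: for $\eta\in\W^{\bcdot}(M)$ and any local frame $(e_a)$ with dual coframe $(\theta^a)$,
\[
d\eta=\sum_a\theta^a\wedge\nabla_{e_a}\eta+\sum_a\theta^a(T^\nabla)\wedge(e_a\lrcorner\eta)\,,
\]
which comes from substituting $[e_a,e_b]=\nabla_{e_a}e_b-\nabla_{e_b}e_a-T^\nabla(e_a,e_b)$ into the Koszul expression for $d$ and keeping track of signs (in the spirit of the bracket manipulations in the proof of Theorem \ref{squareCD}). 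Applying this to $\nabla=\Derc{\cC}$ and rewriting everything in the normalized complex frame $(\e_j,\overline{\e}_j)$ of \eqref{vettu}, with dual coframe $(\e^j,\overline{\e}^j)$, yields
\[
d\eta=\sum_j\big(\e^j\wedge\Derc{\cC}_{\e_j}\eta+\overline{\e}^j\wedge\Derc{\cC}_{\overline{\e}_j}\eta\big)+\sum_j\big(\e^j(T)\wedge(\e_j\lrcorner\eta)+\overline{\e}^j(T)\wedge(\overline{\e}_j\lrcorner\eta)\big)\,.
\]
The two structural inputs here are: (i) the Chern connection is a $\U_{\!{}_{g,J}\!}(M)$-connection, so $\Derc{\cC}_X$ preserves the bigrading of forms; and (ii) by \eqref{TS2}--\eqref{TS} the Chern torsion has pure type $(2,0)+(0,2)$, i.e.\ $T(\e_r,\overline{\e}_s)=0$, $T_{rs}=T(\e_r,\e_s)\in T^{10}M$, $T_{\bar r\bar s}=T(\overline{\e}_r,\overline{\e}_s)\in T^{01}M$, whence $\e^j(T)\in\W^{2,0}(M)$ and $\overline{\e}^j(T)\in\W^{0,2}(M)$.

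The first two formulas then follow by splitting the expression for $d\eta$ into its components of bidegree $(p+1,q)$ and $(p,q+1)$, for $\eta\in\W^{p,q}(M)$. Indeed $\e^j\wedge\Derc{\cC}_{\e_j}\eta$ and $\e^j(T)\wedge(\e_j\lrcorner\eta)$ lie in $\W^{p+1,q}(M)$, while $\overline{\e}^j\wedge\Derc{\cC}_{\overline{\e}_j}\eta$ and $\overline{\e}^j(T)\wedge(\overline{\e}_j\lrcorner\eta)$ lie in $\W^{p,q+1}(M)$; since $d=\partial+\bar{\partial}$ with $\partial\colon\W^{p,q}\to\W^{p+1,q}$ and $\bar{\partial}\colon\W^{p,q}\to\W^{p,q+1}$, matching types gives exactly the claimed identities for $\partial\eta$ and $\bar{\partial}\eta$ (a one-line rearrangement being needed to rewrite $\sum_{r<s}(\e^r\!\wedge\e^s)\wedge(T_{rs}\lrcorner\eta)$, the intrinsic form of the torsion term, as $\sum_j\e^j(T)\wedge(\e_j\lrcorner\eta)$).

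For $\partial^*$ and $\bar{\partial}^*$ I would take formal adjoints of the two identities just established, with respect to the $L^2$ Hermitian inner product. The ingredients are: the adjoint of $\e^j\wedge(\cdot)$ is $\e_j\lrcorner(\cdot)$, and that of $\overline{\e}^j\wedge(\cdot)$ is $\overline{\e}_j\lrcorner(\cdot)$; since $\Derc{\cC}$ is a metric connection, $(\Derc{\cC}_{\e_j})^*=-\Derc{\cC}_{\overline{\e}_j}$ up to a divergence term; and the adjoint of the zeroth-order torsion operator is again zeroth-order, producing the $(T_{\bar r\bar s})^\flat\wedge(\cdot)$- and $(T_{rs})^\flat\wedge(\cdot)$-terms once one identifies $\sum_j\overline{\e^j(T_{rs})}\,\e^j=(T_{\bar r\bar s})^\flat$. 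The divergence terms, together with the extra contributions coming from $\Derc{\cC}_{\overline{\e}_j}\e_j$ and from commuting the interior products past the exterior products, reorganize into the Lee form $\q$ via \eqref{Dee} and cancel, leaving precisely the stated clean formulas; the bookkeeping is entirely parallel to the proof of Proposition \ref{adjoint}. (Alternatively, one runs the Koszul-type argument directly on $d^*=-\sum_a e_a\lrcorner\Derc{LC}_{e_a}$, rewriting $\Derc{LC}$ through the contorsion \eqref{contS} and then splitting by bidegree.) The computations for $\partial,\bar{\partial}$ are routine; the genuinely delicate step is this last cancellation — verifying that no $\q$-term survives in $\partial^*\eta$ and $\bar{\partial}^*\eta$ — together with pinning down all the signs in the Koszul-with-torsion identity and in the commutation relations between interior and exterior products.
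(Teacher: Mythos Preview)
Your proposal is correct, and the difference from the paper's argument is mainly organizational. For $\partial$ and $\bar\partial$, the paper starts from the torsion-free identity $d\eta=\sum_s e^s\wedge\Derc{LC}_{e_s}\eta$, substitutes $\Derc{LC}=\Derc{\cC}-S$ via the contorsion \eqref{contS}, and then has to analyze the bidegree of the resulting $S$-terms (using $S_X(\W^{p,q})\subset\W^{p+1,q-1}\oplus\W^{p,q}\oplus\W^{p-1,q+1}$) before identifying them with the torsion pieces. You instead invoke the Koszul-with-torsion identity directly for $\nabla=\Derc{\cC}$, so the correction appears from the outset in the form $\e^j(T)\wedge(\e_j\lrcorner\eta)$ with its $(2,0)$ type already manifest; this bypasses the intermediate contorsion bookkeeping and is slightly more streamlined, while the paper's route makes the link with \eqref{contS}--\eqref{TS} explicit. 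For $\partial^*$ and $\bar\partial^*$ the paper says only that the proof is similar, and your parenthetical alternative (rewrite $d^*=-\sum_a e_a\lrcorner\Derc{LC}_{e_a}$ through the contorsion and split by type) is exactly that ``similar'' argument. Your primary route via formal adjunction also works; you are right that the genuinely nontrivial step there is the Lee-form cancellation, and the mechanism is indeed the one in \eqref{Dee} and Proposition~\ref{adjoint}.
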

\begin{proof} Since $S=\Derc{\cC}_{\color{white} X}-\Derc{LC}_{\color{white} X}$, the contorsion tensor acts in a natural way on forms. Actually, it can be proved by induction that its action is such that $$S_X\big(\W^{p,q}(M)\big) \subset \W^{p+1,q-1}(M) \oplus \W^{p,q}(M) \oplus \W^{p-1,q+1}(M) \ , \quad 0 \leq p,q \leq n$$ for every $X \in \gX(M)$. From standard properties of Levi-Civita connection and \eqref{contS}, given an unitary basis $(e_j) \subset T_xM$, we get $$d\h|_x = \sum_{s=1}^{2n} \Big(e^s \wedge \Derc{LC}_{e_s}\h\Big) = \sum_j \Big(\e^j \wedge \Derc{\cC}_{\e_j}\h+\overline{\e}^j \wedge \Derc{\cC}_{\overline{\e}_j}\h-\e^j \wedge S_{\e_j}\h-\overline{\e}^j \wedge S_{\overline{\e}_j}\h\Big)$$ and so \begin{align*}
&\partial\h|_x = \big(d\h|_x\big)^{p+1,q} = \sum_j \Big(\e^j \wedge \Derc{\cC}_{\e_j}\h - \e^j \!\wedge\! \big(S_{\e_j}\h\big)^{p,q}-\overline{\e}^j \!\wedge\! \big(S_{\overline{\e}_j}\h\big)^{p+1,q-1}\Big) \ , \\
&\bar{\partial}\h|_x = \big(d\h|_x\big)^{p,q+1} = \sum_j \Big(\overline{\e}^j \wedge \Derc{\cC}_{\overline{\e}_j}\h - \e^j \!\wedge\! \big(S_{\e_j}\h\big)^{p-1,q+1}-\overline{\e}^j \!\wedge\! \big(S_{\overline{\e}_j}\h\big)^{p,q}\Big) \ .
\end{align*} Furthermore, one can directly check that \begin{align*}
&\sum_j \Big(- \e^j \!\wedge\! \big(S_{\e_j}\h\big)^{p,q}-\overline{\e}^j \!\wedge\! \big(S_{\overline{\e}_j}\h\big)^{p+1,q-1}\Big)= \sum_j \e^j(T) \!\wedge\! \big(\e_j \lrcorner \h\big) \, ,\\
&\sum_j \Big(- \e^j \!\wedge\! \big(S_{\e_j}\h\big)^{p-1,q+1}-\overline{\e}^j \!\wedge\! \big(S_{\overline{\e}_j}\h\big)^{p,q}\Big)= \sum_j \overline{\e}^j(T) \!\wedge\! \big(\overline{\e}_j \lrcorner \h\big) \, .
\end{align*} The remaining identities can be proved in a similar way. \end{proof}

We may now proceed with the proof of Theorem \ref{isopartCD}.

\begin{proof}[Proof of Theorem \ref{isopartCD}] Consider a unitary frame field $(e_j): \cU \subset M \ra \U_{\!{}_{g,J}\!}(M)$, the normalized complex vectors $\e_j$, $\overline{\e}_j$ defined in \eqref{vettu} and a form $\h = \nu \wedge \overline{\mu} \in \W^{p,q}(M)$. Then, using notation \eqref{notwedge}, from \eqref{prod1form} and Lemma \ref{lemmade} it follows that \begin{align*}
\Diracde\!'{}^{{}^{(L)}}\!(\h \widehat{\bcdot} \xi^{\zero}) \!&= \sum_j \overline{\e}_j \!\bcdot_{\!\!{}_L} \Derc{\cC}_{\e_j}(\h \widehat{\bcdot} \xi^{\zero}) - \frac12\sum_{r<s} (\overline{\e}_r \!\cdot\! \overline{\e}_s \!\cdot\! T_{rs}) \!\bcdot_{\!\!{}_L}(\h \widehat{\bcdot} \xi^{\zero}) \\
\phantom{\Diracde\!'{}^{{}^{(L)}}\!(\h \widehat{\bcdot} \xi^{\zero}) \!}&=\!\!\sum_j \!\Big(\overline{\e}_j \!\bcdot_{\!\!{}_{L}}\! \big((\Derc{\cC}_{\e_j}\nu \!\wedge\! \overline{\mu}) \widehat{\bcdot} \xi^{\zero}\big) {+}\overline{\e}_j \!\bcdot_{\!\!{}_{L}} \!\big((\nu \!\wedge\! \Derc{\cC}_{\e_j}\overline{\mu}) \widehat{\bcdot} \xi^{\zero}\big)\!\Big){-} \frac12\!\sum_{r<s} (\overline{\e}_r \!\cdot\! \overline{\e}_s \!\cdot\! T_{rs}) \!\bcdot_{\!\!{}_L}\!(\h \widehat{\bcdot} \xi^{\zero}) \\
&=\sum_j \Big((\e^j \!\cdot\! \Derc{\cC}_{\e_j}\nu \!\bcdot\! \f^{\zero}) \!\otimes\! (\overline{\mu} \!\bcdot\! \psi^{\zero}){+}(\e^j \!\cdot\! \nu \!\bcdot\! \f^{\zero}) \!\otimes\! (\Derc{\cC}_{\e_j}\overline{\mu} \!\bcdot\! \psi^{\zero})\Big) - \\
\phantom{\Diracde\!'{}^{{}^{(L)}}\!(\h \widehat{\bcdot} \xi^{\zero}) \!}&\phantom{aaaaaaaaaaaaaaaaaaaaaaaaaaaaai}-\frac12\sum_{\substack{r<s \\ m}}\,T_{rs\overline{m}}(\overline{\e}_r \!\cdot\! \overline{\e}_s \!\cdot\! \e_m \!\cdot\! \nu \!\bcdot\! \f^{\zero}) \!\otimes\! (\overline{\mu} \!\bcdot\! \psi^{\zero}) \\
\phantom{\Diracde\!'{}^{{}^{(L)}}\!(\h \widehat{\bcdot} \xi^{\zero}) \!}&=\sum_j \Big(\big((\e^j \!\wedge\! \Derc{\cC}_{\e_j}\nu) \!\bcdot\! \f^{\zero}\big) \!\otimes\! (\overline{\mu} \!\bcdot\! \psi^{\zero}) {+}\big((\e^j \!\wedge\! \nu) \!\bcdot\! \f^{\zero}\big) \!\otimes\! (\Derc{\cC}_{\e_j}\overline{\mu} \!\bcdot\! \psi^{\zero})\Big)+\\
\phantom{\Diracde\!'{}^{{}^{(L)}}\!(\h \widehat{\bcdot} \xi^{\zero}) \!}&\phantom{aaaaaaaaaaaaaaaaaaaaaaaaaa}+\sum_{\substack{r<s \\ m}}\,T_{rs\overline{m}}\,\big((\e^r \!\wedge\! \e^s \!\wedge\! (\e_m \lrcorner \nu)) \!\bcdot\! \f^{\zero}\big) \!\otimes\! (\overline{\mu} \!\bcdot\! \psi^{\zero}) \\
\phantom{\Diracde\!'{}^{{}^{(L)}}\!(\h \widehat{\bcdot} \xi^{\zero}) \!}&=\sum_j (\e^j \!\wedge\! \Derc{\cC}_{\e_j}\h)\widehat{\bcdot} \xi^{\zero} + \sum_{\substack{r<s \\ m}}\,T_{rs\overline{m}}\,(\e^r \!\wedge\! \e^s \!\wedge\! (\e_m \lrcorner \h))\widehat{\bcdot} \xi^{\zero}
\end{align*}

\noindent $\displaystyle \phantom{\Diracde\!'{}^{{}^{(L)}}\!(\h \widehat{\bcdot} \xi^{\zero}) \!}=\sum_j \big(\e^j \!\wedge\! \Derc{\cC}_{\e_j}\h + \e^j(T) \!\wedge\! (\e_j \lrcorner \h)\big)\widehat{\bcdot} \xi^{\zero}$ \\
$\displaystyle \phantom{\Diracde\!'{}^{{}^{(L)}}\!(\h \widehat{\bcdot} \xi^{\zero}) \!}=\partial\h \widehat{\bcdot} \xi^{\zero}$ . \\
So we get: $$\big(\vars^{-1} \!\circ\! \Diracde\!'{}^{{}^{(L)}} \!\circ \vars\big)(\h) =\frac1{2^{\frac{p+q}2}}\vars^{-1}\big(\partial\h \widehat{\bcdot} \xi^{\zero}\big) =\frac{2^{\frac{p+q+1}2}}{2^{\frac{p+q}2}}\partial \h =\sqrt2 \partial \h \ .$$ Proceeding in a similar way we obtain \begin{align*}
\Diracde\!''{}^{{}^{(L)}}\!(\h \widehat{\bcdot} \xi^{\zero}) &= \sum_j \e_j \!\bcdot_{\!\!{}_L} \Derc{\cC}_{\overline{\e}_j}(\h \widehat{\bcdot} \xi^{\zero}) - \frac12\sum_{r<s} (\e_r \!\cdot\! \e_s \!\cdot\! T_{\bar{r}\bar{s}}) \!\bcdot_{\!\!{}_L}(\h \widehat{\bcdot} \xi^{\zero}) \\
&=\!\!\sum_{j=1}^n \!\!\Big(\!\e_j \!\bcdot_{\!\!{}_L} \!\big((\Derc{\cC}_{\overline{\e}_j}\nu \!\wedge\! \overline{\mu}) \widehat{\bcdot} \xi^{\zero}\big) {+} \e_j  \!\bcdot_{\!\!{}_L}\! \big((\nu \!\wedge\! \Derc{\cC}_{\overline{\e}_j}\overline{\mu}) \widehat{\bcdot} \xi^{\zero}\big)\!\Big) {-} \frac12\!\sum_{r<s}\! (\e_r \!\cdot\! \e_s \!\cdot\! T_{\bar{r}\bar{s}}) \!\bcdot_{\!\!{}_L}\!(\h \widehat{\bcdot} \xi^{\zero}) \\
\phantom{\Diracde\!''{}^{{}^{(L)}}\!(\h \widehat{\bcdot} \xi^{\zero})} &=\sum_{j=1}^n \Big((\overline{\e}^j \!\cdot\! \Derc{\cC}_{\overline{\e}_j}\nu \!\bcdot\! \f^{\zero}) \!\otimes\! (\overline{\mu} \!\bcdot\! \psi^{\zero}){+}(\overline{\e}^j \!\cdot\! \nu \!\bcdot\! \f^{\zero}) \!\otimes\! (\Derc{\cC}_{\overline{\e}_j}\overline{\mu} \!\bcdot\! \psi^{\zero})\Big) -\\
&\phantom{aaaaaaaaaaaaaaaaaaaaaaaaaaaaai} {-} \frac12 \sum_{\substack{r<s \\ m}} T_{\bar{r}\bar{s}m} \,(\e_r \!\cdot\! \e_s \!\cdot\! \overline{\e}_m \!\cdot\! \nu \!\bcdot\! \f^{\zero})\!\otimes\! (\overline{\mu} \!\bcdot\! \psi^{\zero}) \\
&=\sum_{j=1}^n\Big(\big({-}2(\e_j \lrcorner \Derc{\cC}_{\overline{\e}_j}\nu) \!\bcdot\! \f^{\zero}\big) \!\otimes\! (\overline{\mu} \!\bcdot\! \psi^{\zero}) {+}\big({-}2(\e_j \lrcorner \nu) \!\bcdot\! \f^{\zero}\big) \!\otimes\! (\Derc{\cC}_{\overline{\e}_j}\overline{\mu} \!\bcdot\! \psi^{\zero})\Big) +\\
&\phantom{aaaaaaaaaaaaaaaaaaaaaaai}+\!\sum_{\substack{r<s \\ m}}T_{\bar{r}\bar{s}m} \,\big({-}2(\e_r \lrcorner \e_s \lrcorner (\e^m \!\wedge\! \nu)) \!\bcdot\! \f^{\zero}\big)\!\otimes\! (\overline{\mu} \!\bcdot\! \psi^{\zero}) \\
&=\sum_j \big({-}2(\e_j \lrcorner \Derc{\cC}_{\overline{\e}_j}\h)\widehat{\bcdot} \xi^{\zero}\big)+ \sum_{\substack{r<s \\ m}} T_{\bar{r}\bar{s}m}\,\big({-}2(\e_r \lrcorner \e_s \lrcorner (\e^m \!\wedge\! \h))\widehat{\bcdot} \xi^{\zero}\big) \\
&=2\Big({-}\sum_j \e_j \lrcorner \Derc{\cC}_{\overline{\e}_j}\h -\!\! \sum_{r<s} \e_r \lrcorner \e_s \lrcorner \big((T_{\bar{r}\bar{s}})^\flat \!\wedge\!\h\big)\Big)\widehat{\bcdot} \xi^{\zero} \\
&=2\partial^*\h \widehat{\bcdot} \xi^{\zero}
\end{align*} and thus $$\big(\vars^{-1} \!\circ\! \Diracde\!''{}^{{}^{(L)}} \!\circ \vars\big)(\h) =\frac1{2^{\frac{p+q}2}}\vars^{-1}\big(2\partial^*\h \widehat{\bcdot} \xi^{\zero}\big) =2\frac{2^{\frac{p+q-1}2}}{2^{\frac{p+q}2}}\partial^* \h =\sqrt2 \partial^* \h \ .$$ The remaining two cases are perfectly analogous. \end{proof}

\subsection{Harmonic $\eV$-spinors}

Assume now that $(M,g,J)$ is a compact Hermitian $2n$-manifold and consider the Chern-Dirac operators on $\eV M$ determined by \eqref{CD} using the Clifford multiplications $c= \bcdot_{\!\!{}_L}$ and $c= \bcdot_{\!\!{}_R}$, i.e. \beq \Dirac{}^{\!\!\!{}^{(L)}}\! = \Diracde\!'{}^{{}^{(L)}} + \Diracde\!''{}^{{}^{(L)}} \, , \quad\, \Dirac{}^{\!\!\!{}^{(R)}}\! = \Diracde\!'{}^{{}^{(R)}} + \Diracde\!''{}^{{}^{(R)}} \ . \eeq We call them {\it left Chern-Dirac operator} and {\it right Chern-Dirac operator on $\eV$-spinors}, respectively. As we pointed out in \S4, they are both first order elliptic operators and, since $M$ is compact, they are self-adjoint too. We call {\it total-harmonic $\eV$-spinors} the sections of $\eV M$ which are in the kernel of $\Dirac{}^{\!\!\!{}^{(L)}}\!\! {+} \Dirac{}^{\!\!\!{}^{(R)}}$ and {\it right-}(resp. {\it left-}){\it harmonic $\eV$-spinors} the sections of $\eV M$ which are in the kernel of $\Dirac{}^{\!\!\!{}^{(R)}}$ (resp. $\Dirac{}^{\!\!\!{}^{(L)}}$). From Theorem \ref{isopartCD} we obtain the following isomorphism between spaces of harmonic $\eV$-spinors and cohomology groups. 

\begin{theo} Let $(M,g,J)$ be a compact Hermitian $2n$-manifold. Then $$\ker{\!\Big(\!\Dirac{}^{\!\!\!{}^{(L)}}\!\! {+} \Dirac{}^{\!\!\!{}^{(R)}}\!\Big)} \simeq \bigoplus_{k=0}^{2n} H^k_d(M;\bC) \ , \quad \ker{\Dirac{}^{\!\!\!{}^{(R)}}\!} \simeq \bigoplus_{p,q=0}^n H^{p,q}_{\bar{\partial}}(M) \, ,$$ where $H^k_d(M;\bC)$ and $H^{p,q}_{\bar{\partial}}(M)$ are the usual De Rham and Dolbeault cohomology groups of $M$. \label{teoDdR} \end{theo}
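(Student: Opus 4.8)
The strategy is to transport the whole problem, via the isometry $\vars$ of Proposition \ref{propVM}, to the algebra of complex differential forms, where the statement reduces to ordinary Hodge theory. Summing the four identities of Theorem \ref{isopartCD} and recalling that $\Dirac{}^{\!\!\!{}^{(L)}}\! = \Diracde\!'{}^{{}^{(L)}} + \Diracde\!''{}^{{}^{(L)}}$ and $\Dirac{}^{\!\!\!{}^{(R)}}\! = \Diracde\!'{}^{{}^{(R)}} + \Diracde\!''{}^{{}^{(R)}}$, one obtains
$$\vars^{-1} \circ \Dirac{}^{\!\!\!{}^{(L)}}\! \circ \vars = \sqrt2\,(\partial + \partial^*) \ , \qquad \vars^{-1} \circ \Dirac{}^{\!\!\!{}^{(R)}}\! \circ \vars = \sqrt2\,(\bar{\partial} + \bar{\partial}^*) \ .$$
Since $d = \partial + \bar{\partial}$ and therefore $d^* = \partial^* + \bar{\partial}^*$, adding these two identities yields $\vars^{-1} \circ \big(\Dirac{}^{\!\!\!{}^{(L)}}\!\! + \Dirac{}^{\!\!\!{}^{(R)}}\big) \circ \vars = \sqrt2\,(d + d^*)$. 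Hence $\vars$ carries $\ker\big(\Dirac{}^{\!\!\!{}^{(L)}}\!\! + \Dirac{}^{\!\!\!{}^{(R)}}\big)$ isomorphically onto $\ker(d + d^*)$ and $\ker \Dirac{}^{\!\!\!{}^{(R)}}\!$ isomorphically onto $\ker(\bar{\partial} + \bar{\partial}^*)$, these operators acting on $\W^{\bcdot}(M;\bC) = \bigoplus_{k=0}^{2n} \W^k(M;\bC) = \bigoplus_{p,q=0}^n \W^{p,q}(M)$.

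Next I would identify the two kernels with spaces of harmonic forms, using compactness of $M$. Since $d^2 = 0$ one has $(d + d^*)^2 = dd^* + d^*d = \Delta_d$, and integration by parts on the closed manifold $M$ gives $\langle \Delta_d\h, \h \rangle_{L^2} = \|d\h\|^2 + \|d^*\h\|^2$; hence $(d + d^*)\h = 0$ if and only if $\Delta_d\h = 0$, if and only if $d\h = 0 = d^*\h$, so $\ker(d + d^*)$ is exactly the space $\cH^{\bcdot}_d$ of $d$-harmonic complex forms. The same reasoning with $\bar{\partial}$ in place of $d$ gives $\ker(\bar{\partial} + \bar{\partial}^*) = \cH^{\bcdot}_{\bar{\partial}}$, the space of $\bar{\partial}$-harmonic forms. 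By the Hodge theorems for the de Rham and the Dolbeault complexes on the compact manifold $M$, one has $\cH^k_d \simeq H^k_d(M;\bC)$ for each $k$ and $\cH^{p,q}_{\bar{\partial}} \simeq H^{p,q}_{\bar{\partial}}(M)$ for each $(p,q)$. Finally, since each $\vars^{p,q}$ sends $\L^{p,q}(T^*M)$ into $\eV^{p,q}M$, the isometry $\vars$ preserves both the grading by total degree and the bigradation; so the decompositions $\cH^{\bcdot}_d = \bigoplus_{k} \cH^k_d$ and $\cH^{\bcdot}_{\bar{\partial}} = \bigoplus_{p,q}\cH^{p,q}_{\bar{\partial}}$ are carried by $\vars$ onto the two kernels in question, and composing with the Hodge isomorphisms produces the stated identifications.

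The analytic core --- the identification of the partial Chern-Dirac operators with $\sqrt2\,\partial$, $\sqrt2\,\partial^*$, $\sqrt2\,\bar{\partial}$, $\sqrt2\,\bar{\partial}^*$ --- is already supplied by Theorem \ref{isopartCD}, so I do not anticipate a genuine obstacle here. The points that need attention are merely: that $\vars$ being an \emph{isometry} is precisely what makes the two ``starred'' partial operators correspond to the honest $L^2$-adjoints $\partial^*$ and $\bar{\partial}^*$ (in accordance with the formal-adjointness statement of Proposition \ref{adjoint}); that compactness of $M$ is used both in the integration by parts behind the identity $\ker(d+d^*) = \cH^{\bcdot}_d$ and in invoking the Hodge theorems; and the bookkeeping that $\vars$ is bigraded, which is what ensures that the Hodge decomposition splits the kernels by total degree, respectively by bidegree, in the way the statement requires.
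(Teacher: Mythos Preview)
Your proposal is correct and follows essentially the same route as the paper's proof: transport via the isometry $\vars$ using Theorem \ref{isopartCD}, reduce to $\ker(d+d^*)$ (resp.\ $\ker(\bar\partial+\bar\partial^*)$) on forms, and invoke standard Hodge theory on the compact manifold. The only cosmetic difference is that the paper deduces $d\h=d^*\h=0$ from the $L^2$-orthogonality of $d\big(\W^{k-1}\big)$ and $d^*\big(\W^{k+1}\big)$ in each degree, whereas you argue via $(d+d^*)^2=\Delta_d$ and integration by parts; these are equivalent elementary manoeuvres.
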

\begin{proof} Since for every $0 \leq k \leq 2n$ the spaces $d\big(\W^{k-1}(M;\bC)\big)$ and $d^*\big(\W^{k+1}(M;\bC)\big)$ are orthogonal, from Theorem \ref{isopartCD} and standard Hodge theory it follows that \begin{align*}
\ker{\!\Big(\!\Dirac{}^{\!\!\!{}^{(L)}}\!\! {+} \Dirac{}^{\!\!\!{}^{(R)}}\!\Big)} &\overset{\vars}{\simeq} \big\{\h \in \W^{\bcdot}(M;\bC) : d\h + d^*\h = 0 \big\} \\
\phantom{\ker{\!\Big(\!\Dirac{}^{\!\!\!{}^{(L)}}\!\! {+} \Dirac{}^{\!\!\!{}^{(R)}}\!\Big)} }&= \bigoplus_{k=0}^{2n} \big\{\h \in \W^k(M;\bC) : d\h = d^*\h= 0 \big\} \\
& \simeq \bigoplus_{k=0}^{2n} H^k_d(M;\bC) \ .
\end{align*} The proof of the second isomorphism is similar. \end{proof}

\begin{rem} We recall that the canonical spinor bundle of $M$ is naturally included in $\eV M$ (see  $\S 4.2$). Due to this, Theorem \ref{teoDdR} can be considered as a generalization of the well-known isomorphism between harmonic spinors and cohomology classes $H^{0,\cdot}_{\bar{\partial}}(M)$ on compact K\"ahler manifolds (\cite{Hi}, Thm. 2.1). \end{rem}

We conclude this section showing how the partial Chern-Dirac operators on $\eV$-spinors can be used to provide a spinorial characterization for the {\it Bott-Chern} and {\it Aeppli cohomologies} \beq H^{\bcdot,\bcdot}_{{}_{BC}}(M) \= \frac{\ker\p \cap \ker\bar{\p}}{\Im \p\bar{\p}} \,\, , \quad\, H^{\bcdot,\bcdot}_{{}_A}(M) \= \frac{\ker \p\bar{\p}}{\Im \p + \Im\bar{\p}} \,\, . \eeq We recall that both such cohomologies coincide with the usual Dolbeault cohomology in the K\"ahler case, and they are an important tool in studies on non-K\"ahler Hermitian manifolds. For an introduction to them, see e.g. \cite{AT1, AT2}. \smallskip

There is an Hodge Theory for these two cohomologies. In fact, they satisfy \beq H^{\bcdot,\bcdot}_{{}_{BC}}(M) \simeq \ker \D_{{}_{BC}} \,\, , \quad\, H^{\bcdot,\bcdot}_{{}_A}(M) \simeq \ker \D_{{}_A}\,\, , \label{HodgeBCA}\eeq where $\D_{{}_{BC}}$ and $\D_{{}_A}$ are the $4^{th}$ order elliptic self-adjoint operators defined by $$\begin{gathered} \D_{{}_{BC}} \= \big(\p\bar{\p}\big)\big(\p\bar{\p}\big)^*+\big(\p\bar{\p}\big)^*\big(\p\bar{\p}\big)+\big(\bar{\p}^*\p\big)\big(\bar{\p}^*\p\big)^*+\big(\bar{\p}^*\p\big)^*\big(\bar{\p}^*\p\big)+\bar{\p}^*\bar{\p}+\p^*\p \,\, , \\
\D_{{}_A} \= \big(\bar{\p}\p^*\big)\big(\bar{\p}\p^*\big)^*+\big(\bar{\p}\p^*\big)^*\big(\bar{\p}\p^*\big)+\big(\p\bar{\p}\big)\big(\p\bar{\p}\big)^*+\big(\p\bar{\p}\big)^*\big(\p\bar{\p}\big)+\bar{\p}\bar{\p}^*+\p\p^* \,\, . \end{gathered}$$ Due to the fact that for every complex form $\h$ one has \beq \begin{gathered} \D_{{}_{BC}}\h =0 \quad \text{ if and only if } \quad \p\h=\bar{\p}\h=\bar{\p}^*\p^*\h=0 \,\, , \\
\D_{{}_A}\h =0 \quad \text{ if and only if } \quad \p^*\h=\bar{\p}^*\h=\p\bar{\p}\h=0 \,\, , \end{gathered} \label{HodgeBCA2} \eeq it is natural to consider the operators \begin{gather*}
\Dirac{}_{\!\!\!\!{}_{BC}} , \ \Dirac{}_{\!\!\!\!{}_A} : \gV(M) \ra \gV(M) \ ,\\
\Dirac{}_{\!\!\!\!{}_{BC}} \= \Diracde\!'{}^{{}^{(L)}} \!+ \Diracde\!''{}^{{}^{(R)}} \!+ \Diracde\!'{}^{{}^{(R)}} \!\circ \Diracde\!''{}^{{}^{(L)}} \, , \quad \,\,\,
\Dirac{}_{\!\!\!\!{}_A} \= \Diracde\!''{}^{{}^{(L)}} \!+ \Diracde\!'{}^{{}^{(R)}} \!+ \Diracde\!'{}^{{}^{(L)}} \!\circ \Diracde\!''{}^{{}^{(R)}} \ ,
\end{gather*} which we call {\it Bott-Chern-Dirac operator} and {\it Aeppli-Dirac operator on $\eV$-spinors}. From \eqref{HodgeBCA}, \eqref{HodgeBCA2} and Theorem \ref{isopartCD}, it follows directly the following

\begin{prop} Let $(M,g,J)$ be a compact Hermitian $2n$-manifold. Then, for every $0 \leq p,q \leq n$, the kernels of $\Dirac{}_{\!\!\!\!{}_{BC}}$ and $\Dirac{}_{\!\!\!\!{}_A}$ verify $$H^{p,q}_{{}_{BC}}(M) \simeq \ker{\Dirac{}_{\!\!\!\!{}_{BC}}} \cap \gV^{p,q}(M) \,\, , \quad H^{p,q}_{{}_A}(M) \simeq \ker{\Dirac{}_{\!\!\!\!{}_A}} \cap \gV^{p,q}(M)$$ and so there exist injective homomorphisms $$\bigoplus_{p,q=0}^n H^{p,q}_{{}_{BC}}(M) \hookrightarrow \ker{\Dirac{}_{\!\!\!\!{}_{BC}}} \ , \quad \bigoplus_{p,q=0}^n H^{p,q}_{{}_A}(M) \hookrightarrow \ker{\Dirac{}_{\!\!\!\!{}_A}} \ .$$ \end{prop}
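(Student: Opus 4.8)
The plan is to transport everything to differential forms via the bigraded isometry $\vars$ of Proposition \ref{propVM} and then invoke the Hodge theory \eqref{HodgeBCA}--\eqref{HodgeBCA2} for the Bott-Chern and Aeppli cohomologies. Since $\vars\big(\W^{p,q}(M)\big) = \gV^{p,q}(M)$, a section of $\eV M$ lies in $\gV^{p,q}(M)$ precisely when it equals $\vars(\h)$ for some $\h \in \W^{p,q}(M)$, so it suffices to analyse $\Dirac{}_{\!\!\!\!{}_{BC}}$ and $\Dirac{}_{\!\!\!\!{}_A}$ on such homogeneous forms.

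First I would conjugate the two operators by $\vars$. Applying Theorem \ref{isopartCD} term by term, and using that conjugation by $\vars$ is multiplicative on composites so that the two scalar factors $\sqrt2$ combine into a $2$, one obtains
\begin{align*}
\vars^{-1} \circ \Dirac{}_{\!\!\!\!{}_{BC}} \circ \vars &= \sqrt2\,\p + \sqrt2\,\bar{\p} + 2\,\bar{\p}^*\p^* \ , \\
\vars^{-1} \circ \Dirac{}_{\!\!\!\!{}_A} \circ \vars &= \sqrt2\,\p^* + \sqrt2\,\bar{\p}^* + 2\,\p\bar{\p} \ .
\end{align*}
On $\h \in \W^{p,q}(M)$ the three summands of the first operator take values in the bidegrees $(p{+}1,q)$, $(p,q{+}1)$, $(p{-}1,q{-}1)$, and those of the second in $(p{-}1,q)$, $(p,q{-}1)$, $(p{+}1,q{+}1)$. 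In both cases these three bidegrees are pairwise distinct, hence the total sum vanishes if and only if each of the three terms vanishes separately. Therefore $\Dirac{}_{\!\!\!\!{}_{BC}}\vars(\h) = 0$ if and only if $\p\h = \bar{\p}\h = \bar{\p}^*\p^*\h = 0$, and $\Dirac{}_{\!\!\!\!{}_A}\vars(\h) = 0$ if and only if $\p^*\h = \bar{\p}^*\h = \p\bar{\p}\h = 0$, which by \eqref{HodgeBCA2} are exactly the conditions $\D_{{}_{BC}}\h = 0$ and $\D_{{}_A}\h = 0$.

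It follows that $\vars$ restricts to isomorphisms $\ker\Dirac{}_{\!\!\!\!{}_{BC}} \cap \gV^{p,q}(M) \simeq \ker\D_{{}_{BC}} \cap \W^{p,q}(M)$ and $\ker\Dirac{}_{\!\!\!\!{}_A} \cap \gV^{p,q}(M) \simeq \ker\D_{{}_A} \cap \W^{p,q}(M)$, and combining this with the bidegree-$(p,q)$ part of the Hodge isomorphisms \eqref{HodgeBCA} --- legitimate, since $\D_{{}_{BC}}$ and $\D_{{}_A}$ preserve the bigrading --- one gets the asserted isomorphisms with $H^{p,q}_{{}_{BC}}(M)$ and $H^{p,q}_{{}_A}(M)$. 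For the last assertion, since $\gV(M) = \bigoplus_{p,q}\gV^{p,q}(M)$ the internal direct sum $\bigoplus_{p,q}\big(\ker\Dirac{}_{\!\!\!\!{}_{BC}} \cap \gV^{p,q}(M)\big)$ is a subspace of $\ker\Dirac{}_{\!\!\!\!{}_{BC}}$, and precomposing the inclusion with the isomorphisms just obtained produces the injection $\bigoplus_{p,q} H^{p,q}_{{}_{BC}}(M) \hookrightarrow \ker\Dirac{}_{\!\!\!\!{}_{BC}}$; the Aeppli case is identical. The map is in general not surjective, because $\Dirac{}_{\!\!\!\!{}_{BC}}$ and $\Dirac{}_{\!\!\!\!{}_A}$ do not preserve the bigrading, so their kernels may contain non-homogeneous $\eV$-spinors. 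The only slightly delicate point in the whole argument is the bidegree bookkeeping, in particular verifying that $\Diracde\!'{}^{{}^{(R)}} \!\circ \Diracde\!''{}^{{}^{(L)}}$ and $\Diracde\!'{}^{{}^{(L)}} \!\circ \Diracde\!''{}^{{}^{(R)}}$ carry the shifts $({-}1,{-}1)$ and $({+}1,{+}1)$; but this is immediate from Theorem \ref{isopartCD}, and everything else is formal.
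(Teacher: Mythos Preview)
Your proof is correct and is precisely the argument the paper has in mind: the paper states that the proposition ``follows directly'' from \eqref{HodgeBCA}, \eqref{HodgeBCA2} and Theorem \ref{isopartCD}, and you have simply spelled out the (straightforward) bidegree bookkeeping that makes this direct implication explicit.
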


\section{Twisted cohomology of Hermitian manifolds and $\eV$-spinors} \setcounter{equation} 0

\subsection{Twisted $\eV$-spinor bundle with respect to an Hermitian bundle}

Let $(M,g,J)$ be an Hermitian $2n$-manifold and let $E$ be a Chern-Dirac bundle over $M$ with Clifford multiplication $c: \Cl^{\, \bC}M \ra \gg\gl(E)$, Hermitian metric $h$ and covariant derivative $D$. Let also $\big(W,h^{{}^W}\big)$ be an Hermitian bundle over $M$, endowed with a metric covariant derivative $\Derc{W}_{\color{white} X}$. We can trivially extend $c$ to the tensor product bundle $E {\otimes} W$ by \beq c(w)\big(\s {\otimes} s\big) \= \big(c(w)\s\big) {\otimes} s \label{Clifmulttw} \eeq and define \beq \widetilde{h} \= h {\otimes} h^{{}^W} \,\, , \quad \widetilde{D}\= D {\otimes} \Id_{{}_W} + \Id_{{}_E} {\otimes} \Derc{W}_{\color{white} X} \,\, . \label{CDstruct} \eeq With a straightforward computation, one can directly check the following

\begin{prop} The tensor product bundle $\big(E {\otimes} W,\widetilde{h}, \widetilde{D}\big)$ is a Chern-Dirac bundle with respect to the Clifford multiplication \eqref{Clifmulttw}. \end{prop}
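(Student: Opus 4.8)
The plan is to verify, one by one, the four defining requirements of a Chern-Dirac bundle from Definition \ref{CDbundles} for the triple $\big(E{\otimes}W,\widetilde h,\widetilde D\big)$ equipped with the Clifford multiplication \eqref{Clifmulttw}. The guiding remark is that \eqref{Clifmulttw} and \eqref{CDstruct} are arranged so that every structure involved splits along the two tensor factors: Clifford multiplication acts as the identity on the $W$-factor, and $\widetilde D$ is the tensor-product connection. As a consequence, three of the four checks reduce at once to the corresponding property of $E$ combined with an elementary property of $W$, and only the Leibniz-type identity \eqref{derE} calls for a genuine (though short) computation.

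First I would check that $\widetilde D$ is a metric covariant derivative for $\widetilde h = h{\otimes}h^{{}^W}$: applying $X\in\gX(M)$ to $\widetilde h(\s_1{\otimes}s_1,\s_2{\otimes}s_2)=h(\s_1,\s_2)\,h^{{}^W}(s_1,s_2)$ and using the ordinary Leibniz rule together with the facts that $D$ preserves $h$ and $\Derc{W}_X$ preserves $h^{{}^W}$ produces exactly $\widetilde h(\widetilde D_X(\s_1{\otimes}s_1),\s_2{\otimes}s_2)+\widetilde h(\s_1{\otimes}s_1,\widetilde D_X(\s_2{\otimes}s_2))$. Next, since $c(w)$ in \eqref{Clifmulttw} acts only on the first factor, the map $w\mapsto c(w)$ is $\bC$-linear and inherits fibrewise from $E$ the Clifford relations \eqref{relClif}, so $E{\otimes}W$ is a complex left $\Cl^{\,\bC}\!M$-module. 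Finally, for condition \eqref{skewH} and $v\in T^{\bC}M$ one has $\widetilde h\big(c(v)(\s_1{\otimes}s_1),\s_2{\otimes}s_2\big)=h(c(v)\s_1,\s_2)\,h^{{}^W}(s_1,s_2)$, which by \eqref{skewH} for $E$ equals $-h(\s_1,c(\overline v)\s_2)\,h^{{}^W}(s_1,s_2)=-\widetilde h\big(\s_1{\otimes}s_1,c(\overline v)(\s_2{\otimes}s_2)\big)$.

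The only place where the two tensor factors interact is \eqref{derE}, and the crux of the argument is simply making the middle term of the Leibniz rule for $\widetilde D$ land in the right spot. For $X\in\gX(M)$, a section $w$ of $\Cl^{\,\bC}\!M$, and a decomposable section $\s{\otimes}s$ of $E{\otimes}W$, I would compute
\begin{align*}
\widetilde D_X\big(c(w)(\s{\otimes}s)\big)
&= \big(D_X(c(w)\s)\big){\otimes}s + (c(w)\s){\otimes}\Derc{W}_Xs\\
&= \big(c(\Derc{\cC}_Xw)\s\big){\otimes}s + \big(c(w)D_X\s\big){\otimes}s + (c(w)\s){\otimes}\Derc{W}_Xs\\
&= c(\Derc{\cC}_Xw)(\s{\otimes}s) + c(w)\big((D_X\s){\otimes}s + \s{\otimes}\Derc{W}_Xs\big)\\
&= c(\Derc{\cC}_Xw)(\s{\otimes}s) + c(w)\,\widetilde D_X(\s{\otimes}s),
\end{align*}
where the first line uses \eqref{Clifmulttw} and \eqref{CDstruct}, the second uses \eqref{derE} for $E$, and the last two again use \eqref{Clifmulttw} and \eqref{CDstruct}. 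Since every section of $E{\otimes}W$ is locally a sum of such decomposable sections, this finishes \eqref{derE}.

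I do not expect a genuine obstacle: the extension \eqref{Clifmulttw} is tailored precisely so that all structures factor, and the single computation above is self-contained. The one point I would be careful to state explicitly, rather than leave implicit, is the reduction to decomposable sections $\s{\otimes}s$: in \eqref{skewH} and in \eqref{derE} both sides are additive, and they are $\Cinf(M;\bC)$-linear, resp.\ satisfy the same Leibniz rule, in each section argument, so checking the identities on generators suffices. This is the entire content of the ``straightforward computation'' mentioned in the statement.
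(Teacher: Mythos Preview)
Your proposal is correct and is precisely the ``straightforward computation'' the paper alludes to without writing out: the paper gives no further details beyond that phrase, and your verification of the metric compatibility, module structure, \eqref{skewH}, and \eqref{derE} on decomposable sections is exactly what is intended.
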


We focus now on the case in which $E= \eV M$. We call $\eV M {\otimes} W$ the {\it $W$-twisted $\eV$-spinor bundle}. It is naturally endowed with the four {\it twisted partial Chern-Dirac operators} \beq \widetilde{\Diracde\!'{}^{{}^{(L)}}} \,\, , \quad \widetilde{\Diracde\!''{}^{{}^{(L)}}} \,\, , \quad \widetilde{\Diracde\!'{}^{{}^{(R)}}} \,\, , \quad \widetilde{\Diracde\!''{}^{{}^{(R)}}} \,\, , \label{twpCD} \eeq defined in \eqref{pCD}. On the other hand, we recall that the covariant derivative $\Derc{W}_{\color{white} X}\!\!$ defines the exterior derivative $d^{{}^W}\!\!: \W^k(M;W) \ra \W^{k+1}(M;W)$ on $W$-valued differential forms \beq \begin{gathered}
\big(d^{{}^W}\!\!\zeta\big)(X_1,{\dots}, X_{k+1}) \= \sum_j({-}1)^{j+1}\Derc{W}_{X_j}\big(\zeta(X_1, {\dots}, \widehat{X_j} ,{\dots},X_{k+1})\big)+ \phantom{aaaaaaaaaaaaa}\\
\phantom{aaaaaaaaaaaaaaaaaaaaa}+\sum_{r<s}({-}1)^{r+s}\zeta([X_r,X_s],X_1,{\dots},\widehat{X_r},{\dots},\widehat{X_s},{\dots},X_{k+1}) \,\, .
\end{gathered} \eeq This exterior derivative splits into the sum $d^{{}^W}\!\! = \p^{{}^W}\!\!+\bar{\p}^{{}^W}\!\!$, with $$\p^{{}^W}\!\!: \W^{p,q}(M;W) \ra \W^{p+1,q}(M;W) \,\, , \quad \bar{\p}^{{}^W}\!\!: \W^{p,q}(M;W) \ra \W^{p,q+1}(M;W) \,\, .$$ Finally, consider the extension of the isometry $\vars: \L^{\bcdot}(T^{*\bC}M) \ra \eV M$ defined in Proposition \ref{propVM} to \beq \vars^{{}^W}\!\!: \L^{\bcdot}(T^{*\bC}M) {\otimes} W \ra \eV M {\otimes} W \,\, ,\quad \vars^{{}^W} \!\!\= \vars \,{\otimes} \Id_{{}_W} \,\, . \label{twistedro} \eeq To have a short notation for $\vars^{{}^W}\!\!$, for each decomposable element $\h {\otimes} s \in \L^{\bcdot}(T^{*\bC}M) {\otimes} W$ it is convenient to define $(\h {\otimes} s) \widehat{\bcdot} \xi^{\zero} \= (\h \widehat{\bcdot} \xi^{\zero}) {\otimes} s$, so that we can simply write $$\vars^{{}^W}\!\!(\zeta) = \frac1{2^{\frac{p+q}{2}}} \zeta \widehat{\bcdot} \xi^{\zero} \quad \text{ for every } \zeta \in \W^{p,q}(M;W)\,\, .$$

In analogy with Theorem \ref{isopartCD}, we have the following important identities for the $W$-twisted Chern-Dirac operators.

\begin{theorem} Let $\vars^{{}^W}\!\!: \L^{\bcdot}(T^{*\bC}M) {\otimes} W \ra \eV M {\otimes} W$ be the isometry \eqref{twistedro}. Then the partial Chern-Dirac operators \eqref{twpCD} verify \begin{align*}
\Big(\vars^{{}^W}\!\Big)^{{-}1} \!\circ \widetilde{\Diracde\!'{}^{{}^{(L)}}} \!\circ \vars^{{}^W}\! = \sqrt2 \p^{{}^W}\!\! \ &, \quad \Big(\vars^{{}^W}\!\Big)^{{-}1} \!\circ \widetilde{\Diracde\!''{}^{{}^{(L)}}} \!\circ \vars^{{}^W}\! = \sqrt2 \p^{{}^W\!*}\!\! \ , \\
\Big(\vars^{{}^W}\!\Big)^{{-}1} \!\circ \widetilde{\Diracde\!'{}^{{}^{(R)}}} \!\circ \vars^{{}^W}\! = \sqrt2 \bar{\p}^{{}^W\!*}\!\! \ &, \quad \Big(\vars^{{}^W}\!\Big)^{{-}1} \!\circ \widetilde{\Diracde\!''{}^{{}^{(R)}}} \!\circ \vars^{{}^W}\! = \sqrt2 \bar{\p}^{{}^W}\!\! \, . \end{align*} \label{isoparttwCD} \end{theorem}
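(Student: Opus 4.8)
The plan is to run the proof of Theorem~\ref{isopartCD} with the bundle $W$ carried along as an inert factor. Three features of the twisted Chern-Dirac structure of $\eV M\otimes W$ make this possible: by \eqref{Clifmulttw} the Clifford multiplication acts only on the $\eV M$-slot, $w\bcdot(\s\otimes s)=(w\bcdot\s)\otimes s$; by \eqref{CDstruct} the covariant derivative obeys the Leibniz rule $\widetilde D_X(\s\otimes s)=(D_X\s)\otimes s+\s\otimes\Derc{W}_X s$; and by \eqref{twistedro} the isometry is $\vars^{{}^W}=\vars\otimes\Id_{{}_W}$ with $(\h\otimes s)\,\widehat{\bcdot}\,\xi^{\zero}=(\h\,\widehat{\bcdot}\,\xi^{\zero})\otimes s$. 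Since all the operators involved are $\bC$-linear, it suffices to treat locally defined decomposable elements $\zeta=\h\otimes s\in\W^{p,q}(M;W)$ with $\h\in\W^{p,q}(M)$ and $s$ a local section of $W$.

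First I would record the twisted analogue of Lemma~\ref{lemmade}: writing $\widetilde\nabla^{\cC}\=\Derc{\cC}\otimes\Id_{{}_W}+\Id\otimes\Derc{W}$ for the combined connection on $\L^{\bcdot}(T^{*\bC}M)\otimes W$, one has for $\zeta\in\W^{p,q}(M;W)$
\[
\p^{{}^W}\zeta\big|_x=\sum_j\Big(\e^j\wedge\widetilde\nabla^{\cC}_{\e_j}\zeta+\e^j(T)\wedge(\e_j\lrcorner\zeta)\Big),
\]
together with the three companion formulas for $\bar\p^{{}^W}$, $\p^{{}^W*}$ and $\bar\p^{{}^W*}$. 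The proof copies that of Lemma~\ref{lemmade}: the contorsion $S=\Derc{\cC}-\Derc{LC}$ acts on $W$-valued forms only through the form factor, hence has the same bidegree behaviour; the identity $d^{{}^W}\zeta|_x=\sum_s e^s\wedge\widetilde\nabla^{LC}_{e_s}\zeta$, with $\widetilde\nabla^{LC}\=\Derc{LC}\otimes\Id_{{}_W}+\Id\otimes\Derc{W}$, holds because $\Derc{LC}$ is torsion-free and is checked at once on $\h\otimes s$ and extended by the Leibniz rule; splitting by bidegree yields the four formulas.

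Next, applying the local expression \eqref{pCD} for $\widetilde{\Diracde\!'{}^{{}^{(L)}}}$ to $\zeta\,\widehat{\bcdot}\,\xi^{\zero}=(\h\,\widehat{\bcdot}\,\xi^{\zero})\otimes s$ and expanding $\widetilde D_{\e_j}$ by the Leibniz rule, the contribution of the summand $D\otimes\Id_{{}_W}$, together with the zero-order torsion term, is precisely the computation performed in the proof of Theorem~\ref{isopartCD} multiplied by $s$, and so equals $(\p\h\,\widehat{\bcdot}\,\xi^{\zero})\otimes s$; the contribution of the summand $\Id\otimes\Derc{W}$ is $\sum_j\big(\overline{\e}_j\bcdot_{\!\!{}_L}(\h\,\widehat{\bcdot}\,\xi^{\zero})\big)\otimes\Derc{W}_{\e_j}s=\sum_j(\e^j\wedge\h)\,\widehat{\bcdot}\,\xi^{\zero}\otimes\Derc{W}_{\e_j}s$. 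Adding the two and invoking the twisted Lemma identifies the result with $(\p^{{}^W}\zeta)\,\widehat{\bcdot}\,\xi^{\zero}$, and the numerical factor $2^{(p+q+1)/2}/2^{(p+q)/2}=\sqrt2$ gives $(\vars^{{}^W})^{-1}\circ\widetilde{\Diracde\!'{}^{{}^{(L)}}}\circ\vars^{{}^W}=\sqrt2\,\p^{{}^W}$. The identity for $\widetilde{\Diracde\!''{}^{{}^{(L)}}}$ follows either by the parallel direct computation, using the $\p^{{}^W*}$-formula of the twisted Lemma and \eqref{prod1form}, or, more quickly, from Proposition~\ref{adjoint} applied to the Chern-Dirac bundle $\eV M\otimes W$ and the fact that $\vars^{{}^W}$ is an isometry: then $\widetilde{\Diracde\!''{}^{{}^{(L)}}}$ is the formal adjoint of $\widetilde{\Diracde\!'{}^{{}^{(L)}}}$, hence corresponds to $\sqrt2\,(\p^{{}^W})^*=\sqrt2\,\p^{{}^W*}$. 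The two $\bcdot_{\!\!{}_R}$ cases are handled in exactly the same way, the only difference being the passive sign $(-1)^p$ already present in the definition of $\bcdot_{\!\!{}_R}$.

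I do not expect a genuine obstacle: the whole point is that the $W$-coefficients are transparent to Clifford multiplication, so they ride along through every step of the proof of Theorem~\ref{isopartCD}. The one place that needs a little care is the bookkeeping in the twisted Lemma — verifying that $S$ and the formula $d^{{}^W}\zeta=\sum_s e^s\wedge\widetilde\nabla^{LC}_{e_s}\zeta$ behave as in the untwisted setting — but this is routine once stated.
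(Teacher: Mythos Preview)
Your proposal is correct and follows essentially the same route as the paper: reduce to decomposable $\zeta=\h\otimes s$, expand $\widetilde D$ by the Leibniz rule, recognize the $\eV M$-part as the untwisted computation from Theorem~\ref{isopartCD}, and identify the remaining $\Derc{W}$-terms with the extra piece in $\p^{{}^W}(\h\otimes s)=\p\h\otimes s+\sum_j(\e^j\wedge\h)\otimes\Derc{W}_{\e_j}s$. The only cosmetic differences are that the paper does not state your ``twisted Lemma~\ref{lemmade}'' separately (it simply writes down the formula for $\p^{{}^W}$ on decomposables at the last step), and that the paper handles $\widetilde{\Diracde\!''{}^{{}^{(L)}}}$ by the parallel direct computation rather than by your adjointness shortcut via Proposition~\ref{adjoint}.
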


\begin{proof} Consider a unitary frame field $(e_j): \cU \subset M \ra \U_{\!{}_{g,J}\!}(M)$, the associated normalized complex vectors $\e_j$, $\overline{\e}_j$ and a decomposable element $\zeta = \h {\otimes} s \in \W^{p,q}(M; W)$. Following the same arguments of the proof of Theorem \ref{isopartCD}, we have \begin{align*}
\widetilde{\Diracde\!'{}^{{}^{(L)}}}\!\big((\h \widehat{\bcdot} \xi^{\zero}) {\otimes} s\big) \!&= \sum_j \overline{\e}_j \!\bcdot_{\!\!{}_L} \widetilde{D}_{\e_j}\big((\h \widehat{\bcdot} \xi^{\zero}) {\otimes} s\big) - \frac12\sum_{r<s} (\overline{\e}_r \!\cdot\! \overline{\e}_s \!\cdot\! T_{rs}) \!\bcdot_{\!\!{}_L}\big((\h \widehat{\bcdot} \xi^{\zero}) {\otimes} s\big) \\
&=\sum_j \big(\overline{\e}_j \!\bcdot_{\!\!{}_L} (\Derc{\cC}_{\e_j}\h \widehat{\bcdot} \xi^{\zero})\big) {\otimes} s - \frac12\sum_{r<s} \big((\overline{\e}_r \!\cdot\! \overline{\e}_s \!\cdot\! T_{rs}) \!\bcdot_{\!\!{}_L}(\h \widehat{\bcdot} \xi^{\zero})\big) {\otimes} s +\\
&\phantom{aaaaaaaaaaaaaaaaaaaaaaaaaaaaaaaaaaii}+\sum_j \big(\overline{\e}_j \!\bcdot_{\!\!{}_L} (\h \widehat{\bcdot} \xi^{\zero})\big) {\otimes} \Derc{W}_{\e_j}s \\
&= (\p\h \widehat{\bcdot} \xi^{\zero}) {\otimes} s +\sum_j \big((\e^j \wedge \h)\widehat{\bcdot} \xi^{\zero}\big) {\otimes} \Derc{W}_{\e_j}s \phantom{aaaaaaaaaaaaaaaaaaaaaaa}\\
\phantom{\widetilde{\Diracde\!'{}^{{}^{(L)}}}\!\big((\h \widehat{\bcdot} \xi^{\zero}) {\otimes} s\big) \!}&= \Big(\p\h {\otimes} s + \sum_j (\e^j \wedge \h) {\otimes} \Derc{W}_{\e_j}s \Big) \widehat{\bcdot} \xi^{\zero} \\
\phantom{\widetilde{\Diracde\!'{}^{{}^{(L)}}}\!\big((\h \widehat{\bcdot} \xi^{\zero}) {\otimes} s\big) \!}&= \p^{{}^W}\!\!(\h {\otimes} s)\widehat{\bcdot} \xi^{\zero} \,\, .
\end{align*} Hence $$\Big(\Big(\vars^{{}^W}\!\Big)^{{-}1} \!\circ \widetilde{\Diracde\!'{}^{{}^{(L)}}} \!\circ \vars^{{}^W}\!\Big)(\h {\otimes} s) = \frac{2^{\frac{p+q+1}2}}{2^{\frac{p+q}2}}\p^{{}^W}\!\!(\h {\otimes} s) =\sqrt2 \p^{{}^W}\!\!(\h {\otimes} s) \ .$$ Proceeding in a similar way we obtain \begin{align*}
\widetilde{\Diracde\!''{}^{{}^{(L)}}}\!\big((\h \widehat{\bcdot} \xi^{\zero}) {\otimes} s\big) \!&= \sum_j \e_j \!\bcdot_{\!\!{}_L} \widetilde{D}_{\overline{\e}_j}\big((\h \widehat{\bcdot} \xi^{\zero}) {\otimes} s\big) - \frac12\sum_{r<s} (\e_r \!\cdot\! \e_s \!\cdot\! T_{\bar{r}\bar{s}}) \!\bcdot_{\!\!{}_L}\big((\h \widehat{\bcdot} \xi^{\zero}) {\otimes} s\big) \\
&=\sum_j \big(\e_j \!\bcdot_{\!\!{}_L} (\Derc{\cC}_{\overline{\e}_j}\h \widehat{\bcdot} \xi^{\zero})\big) {\otimes} s - \frac12\sum_{r<s} \big((\e_r \!\cdot\! \e_s \!\cdot\! T_{\bar{r}\bar{s}}) \!\bcdot_{\!\!{}_L}(\h \widehat{\bcdot} \xi^{\zero})\big) {\otimes} s +\\
&\phantom{aaaaaaaaaaaaaaaaaaaaaaaaaaaaaaaaaai}+\sum_j \big(\e_j \!\bcdot_{\!\!{}_L} (\h \widehat{\bcdot} \xi^{\zero})\big) {\otimes} \Derc{W}_{\overline{\e}_j}s \\
&= 2(\p^*\h \widehat{\bcdot} \xi^{\zero}) {\otimes} s +\sum_j \big({-}2(\e_j \lrcorner \h)\widehat{\bcdot} \xi^{\zero}\big) {\otimes} \Derc{W}_{\overline{\e}_j}s \phantom{aaaaaaaaaaaaaaaaaaaaaaaaaaaaaaaaaaaaaaaaaaaaaaaaaaaaaa}\\
&= 2\Big(\p^*\h {\otimes} s - \sum_j (\e_j \lrcorner \h) {\otimes} \Derc{W}_{\overline{\e}_j}s \Big) \widehat{\bcdot} \xi^{\zero} \\
&= 2\,\p^{{}^W\!*}\!(\h {\otimes} s)\widehat{\bcdot} \xi^{\zero}
\end{align*} and thus $$\Big(\Big(\vars^{{}^W}\!\Big)^{{-}1} \!\circ \widetilde{\Diracde\!''{}^{{}^{(L)}}} \!\circ \vars^{{}^W}\!\Big)(\h {\otimes} s) =2\frac{2^{\frac{p+q-1}2}}{2^{\frac{p+q}2}}\p^{{}^W\!*}\!(\h {\otimes} s) =\sqrt2 \p^{{}^W\!*}\!(\h {\otimes} s) \ .$$ The remaining two cases are perfectly analogous. \end{proof}

If we finally consider the {\it twisted left Chern-Dirac operator} and the {\it twisted right Chern-Dirac operator}, namely the operators \beq \widetilde{\Dirac{}^{\!\!\!{}^{(L)}}}\! = \widetilde{\Diracde\!'{}^{{}^{(L)}}} + \widetilde{\Diracde\!''{}^{{}^{(L)}}} \,\, , \quad\, \widetilde{\Dirac{}^{\!\!\!{}^{(R)}}}\! = \widetilde{\Diracde\!'{}^{{}^{(R)}}} + \widetilde{\Diracde\!''{}^{{}^{(R)}}} \,\, , \eeq by the same line of arguments on Theorem \ref{teoDdR}, we obtain

\begin{theo} Let $(M,g,J)$ be a compact Hermitian $2n$-manifold and $W$ be an Hermitian bundle over $M$ endowed with a fixed metric covariant derivative $\Derc{W}_{\color{white} X}\!\!$. Then $$\ker{\!\Big(\!\widetilde{\Dirac{}^{\!\!\!{}^{(L)}}}\!\! {+} \widetilde{\Dirac{}^{\!\!\!{}^{(R)}}}\!\Big)} \simeq \bigoplus_{k=0}^{2n} H^k_{d^{{}^W}}(M;W) \ , \quad \ker{\widetilde{\Dirac{}^{\!\!\!{}^{(R)}}}\!} \simeq \bigoplus_{p,q=0}^n H^{p,q}_{\bar{\p}^{{}^W}}(M;W) \,\, ,$$ where $H^k_{d^{{}^W}}(M;W)$ and $H^{p,q}_{\bar{\p}^{{}^W}}(M;W)$ are the $W$-valued De Rham and Dolbeault cohomology groups. \label{teoDdRtw} \end{theo}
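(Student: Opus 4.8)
The plan is to obtain this statement from Theorem \ref{isoparttwCD} in exactly the way Theorem \ref{teoDdR} was obtained from Theorem \ref{isopartCD}. First I would record the consequence of Theorem \ref{isoparttwCD} for the total operators: since $\widetilde{\Dirac{}^{\!\!\!{}^{(L)}}} = \widetilde{\Diracde\!'{}^{{}^{(L)}}} + \widetilde{\Diracde\!''{}^{{}^{(L)}}}$ and $\widetilde{\Dirac{}^{\!\!\!{}^{(R)}}} = \widetilde{\Diracde\!'{}^{{}^{(R)}}} + \widetilde{\Diracde\!''{}^{{}^{(R)}}}$, conjugation by the isometry $\vars^{{}^W}$ of \eqref{twistedro} yields
$$\big(\vars^{{}^W}\big)^{-1}\big(\widetilde{\Dirac{}^{\!\!\!{}^{(L)}}} + \widetilde{\Dirac{}^{\!\!\!{}^{(R)}}}\big)\vars^{{}^W} = \sqrt2\,\big(\p^{{}^W} + \p^{{}^W\!*} + \bar{\p}^{{}^W} + \bar{\p}^{{}^W\!*}\big) = \sqrt2\,\big(d^{{}^W} + d^{{}^W\!*}\big) \,\, ,$$
$$\big(\vars^{{}^W}\big)^{-1}\,\widetilde{\Dirac{}^{\!\!\!{}^{(R)}}}\,\vars^{{}^W} = \sqrt2\,\big(\bar{\p}^{{}^W} + \bar{\p}^{{}^W\!*}\big) \,\, .$$
Since these are nonzero scalar multiples, $\vars^{{}^W}$ identifies $\ker\big(\widetilde{\Dirac{}^{\!\!\!{}^{(L)}}} + \widetilde{\Dirac{}^{\!\!\!{}^{(R)}}}\big)$ with $\{\zeta \in \W^{\bcdot}(M;W) : d^{{}^W}\zeta + d^{{}^W\!*}\zeta = 0\}$ and $\ker\widetilde{\Dirac{}^{\!\!\!{}^{(R)}}}$ with $\{\zeta \in \W^{\bcdot,\bcdot}(M;W) : \bar{\p}^{{}^W}\zeta + \bar{\p}^{{}^W\!*}\zeta = 0\}$.

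Next I would run the standard Hodge argument. As $M$ is compact and $d^{{}^W\!*}$, $\bar{\p}^{{}^W\!*}$ are the formal adjoints of $d^{{}^W}$, $\bar{\p}^{{}^W}$, the subspaces $d^{{}^W}\big(\W^{k-1}(M;W)\big)$ and $d^{{}^W\!*}\big(\W^{k+1}(M;W)\big)$ are $L^2$-orthogonal — here one uses $(d^{{}^W})^2 = 0$, and analogously $(\bar{\p}^{{}^W})^2 = 0$ — so that $d^{{}^W}\zeta + d^{{}^W\!*}\zeta = 0$ holds precisely when $d^{{}^W}\zeta = 0 = d^{{}^W\!*}\zeta$, degree by degree, and similarly for $\bar{\p}^{{}^W}$. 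Thus the two kernels above equal $\bigoplus_{k=0}^{2n}\{\zeta \in \W^k(M;W) : d^{{}^W}\zeta = d^{{}^W\!*}\zeta = 0\}$ and $\bigoplus_{p,q=0}^n\{\zeta \in \W^{p,q}(M;W) : \bar{\p}^{{}^W}\zeta = \bar{\p}^{{}^W\!*}\zeta = 0\}$. Finally, since the twisted de Rham complex $\big(\W^{\bcdot}(M;W), d^{{}^W}\big)$ and the twisted Dolbeault complexes $\big(\W^{p,\bcdot}(M;W), \bar{\p}^{{}^W}\big)$ are elliptic, standard elliptic Hodge theory on the compact manifold $M$ identifies their spaces of harmonic forms with $H^k_{d^{{}^W}}(M;W)$ and $H^{p,q}_{\bar{\p}^{{}^W}}(M;W)$, which gives the two asserted isomorphisms.

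Since all the analytic substance is already contained in Theorem \ref{isoparttwCD}, I do not expect a genuine obstacle: the remaining argument is the routine Hodge-theoretic one, and the only points demanding attention are the bookkeeping (checking that the $\sqrt2$ factors and bidegree shifts of Theorem \ref{isoparttwCD} really assemble $\widetilde{\Dirac{}^{\!\!\!{}^{(L)}}} + \widetilde{\Dirac{}^{\!\!\!{}^{(R)}}}$ into a scalar multiple of $d^{{}^W} + d^{{}^W\!*}$, and that the orthogonal splitting is carried out within each homogeneous degree) together with the tacit standing hypotheses that make the target cohomologies into complexes — namely flatness of $\Derc{W}_{\color{white} X}$ for the De Rham statement and a compatible holomorphic structure on $W$, with $\bar{\p}^{{}^W}$ the $(0,1)$-component of its connection, for the Dolbeault statement, which are precisely what guarantee $(d^{{}^W})^2 = 0$ and $(\bar{\p}^{{}^W})^2 = 0$.
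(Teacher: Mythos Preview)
Your proposal is correct and matches the paper's approach exactly: the paper does not give a separate proof of Theorem \ref{teoDdRtw} but simply states that it follows ``by the same line of arguments'' as Theorem \ref{teoDdR}, i.e.\ conjugate by $\vars^{{}^W}$ via Theorem \ref{isoparttwCD}, use $L^2$-orthogonality of the images of $d^{{}^W}$ and $d^{{}^W\!*}$ (resp.\ $\bar\partial^{{}^W}$ and $\bar\partial^{{}^W\!*}$), and invoke standard Hodge theory. Your closing remark about the tacit hypotheses needed for $(d^{{}^W})^2=0$ and $(\bar\partial^{{}^W})^2=0$ is well taken and indeed left implicit in the paper.
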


\subsection{$\theta$-twisted $\eV$-spinor bundles}

The results of the previous section have immediate applications to the case of twisted cohomology groups $H^{\bcdot}_{d_{\theta}}(M;\bC)$ and $H^{\bcdot, \bcdot}_{\bar{\p}_{\theta}}(M)$ of Hermitian manifolds (see e.g. \cite{AD}). \smallskip

Let $(M,g,J)$ be an Hermitian $2n$-manifold with a fixed closed $1$-form $\theta$. Consider the trivial complex line bundle $\eL_{\theta}$ on $M$ endowed with the flat covariant derivative $\Derc{\theta}_{\color{white} X}\!\!$ defined for every global section $s$ of $\eL_{\theta}$ by $$\Derc{\theta}_{\color{white} X}\!\!s \= ds+\theta \otimes s \,\, .$$ Fixing an open covering $\{\cU_j\}$ of $M$ such that $\theta|_{\cU_j}=df_j$, we get an holomorphic trivialization $\{(\cU_j,e^{-f_j})\}$ of $\eL_{\theta}$, with transition functions $e^{f_j-f_k}$ on $\cU_j \cap \cU_k$, with respect to which $s^{\zero}=(\cU_j,e^{f_j})$ is a parallel nowhere vanish section. This gives rise to an Hermitian metric $h^{\theta}$ on $\eL_{\theta}$ with $h^{\theta}(s^{\zero},s^{\zero})=1$, so that $\Derc{\theta}_{\color{white} X}\!\!$ is metric w.r.t. $h^{\theta}$.

\begin{definition} The {\it $\theta$-twisted $\eV$-spinor bundle} is the tensor product bundle $$\eV_{\theta} M \= \eV M {\otimes} \eL_{-\theta} \,\, $$ endowed with the Hermitian metric $\check{h}^{\theta} \= \check{h} \otimes h^{-\theta}$ and the covariant derivative $\widetilde{\Derc{\theta}_{\color{white} i}}\! \= \Derc{\cC}_{\color{white}X}\!\! {\otimes} \Id_{\eL_{-\theta}} + \Id_{\eV M} {\otimes} \Derc{-\theta}_{\color{white}X}\!\!$ (see \S 5.1 for the definitions of $\check{h}$ and $\Derc{\cC}_{\color{white}X}\!\!$). \end{definition}

By Proposition \ref{Clifmulttw}, $\eV_{\theta} M$ is a Chern-Dirac bundle. The corresponding twisted partial Chern-Dirac operators $\Diracde\!'{}^{{}^{(L)}}_{\theta}$, $\Diracde\!''{}^{{}^{(L)}}_{\theta}$, $\Diracde\!'{}^{{}^{(R)}}_{\theta}$, $\Diracde\!''{}^{{}^{(R)}}_{\theta}$ are called {\it $\theta$-twisted partial Chern-Dirac operators}. Their sums $$\Dirac{}^{\!\!\!{}^{(L)}}_{\!\!\!\theta} = \Diracde\!'{}^{{}^{(L)}}_{\theta} + \Diracde\!''{}^{{}^{(L)}}_{\theta} \,\, , \quad \Dirac{}^{\!\!\!{}^{(R)}}_{\!\!\!\theta}\! = \Diracde\!'{}^{{}^{(L)}}_{\theta} + \Diracde\!''{}^{{}^{(L)}}_{\theta} \label{twthetaCD}$$ are called {\it $\theta$-twisted left Chern-Dirac operator} and {\it $\theta$-twisted right Chern-Dirac operator}. By means of the isomorphism between the De Rham complex of differential forms with values in $\eL_{-\theta}$ and the Lichnerowicz-Novikov complex $${\cdots} \overset{d_{\theta}}{\longrightarrow} \W^{k-1}(M;\bC) \overset{d_{\theta}}{\longrightarrow} \W^{k-1}(M;\bC) \overset{d_{\theta}}{\longrightarrow} {\cdots} \,\, ,\quad d_{\theta} \= d- \theta \wedge \,\, ,$$ from Theorem \ref{isoparttwCD} and Theorem \ref{teoDdRtw} we immediately get

\begin{theo} Let $(M,g,J)$ be an Hermitian $2n$-manifold and $$\vars^{{}^{\eL_{-\theta}}}\!\!: \L^{\bcdot}(T^{*\bC}M) {\otimes} \eL_{-\theta} \ra \eV_{\theta} M$$ the isometry defined in \eqref{twistedro} with $W= \eL_{-\theta}$. Then the $\theta$-twisted partial Chern-Dirac operators verify \begin{align*}
\Big(\vars^{{}^{\eL_{-\theta}}}\!\Big)^{{-}1} \!\circ \Diracde\!'{}^{{}^{(L)}}_{\theta} \!\circ \vars^{{}^{\eL_{-\theta}}}\! = \sqrt2 \p_{\theta} \,\, &, \quad \Big(\vars^{{}^{\eL_{-\theta}}}\!\Big)^{{-}1} \!\circ \Diracde\!''{}^{{}^{(L)}}_{\theta} \!\circ \vars^{{}^{\eL_{-\theta}}}\! = \sqrt2 \p^*_{\theta} \,\, , \\
\Big(\vars^{{}^{\eL_{-\theta}}}\!\Big)^{{-}1} \!\circ \Diracde\!'{}^{{}^{(R)}}_{\theta} \!\circ \vars^{{}^{\eL_{-\theta}}}\! = \sqrt2 \bar{\p}^*_{\theta} \,\, &, \quad \Big(\vars^{{}^{\eL_{-\theta}}}\!\Big)^{{-}1} \!\circ \Diracde\!''{}^{{}^{(R)}}_{\theta} \!\circ \vars^{{}^{\eL_{-\theta}}}\! = \sqrt2 \bar{\p}_{\theta} \,\, . \end{align*} In particular, if $M$ is compact, then $$\ker{\!\Big(\Dirac{}^{\!\!\!{}^{(L)}}_{\!\!\!\theta}\! {+} \Dirac{}^{\!\!\!{}^{(R)}}_{\!\!\!\theta}\!\Big)} \simeq \bigoplus_{k=0}^{2n} H^k_{d_{\theta}}(M;\bC) \ , \quad \ker{\Dirac{}^{\!\!\!{}^{(R)}}_{\!\!\!\theta}\!} \simeq \bigoplus_{p,q=0}^n H^{p,q}_{\bar{\p}_{\theta}}(M) \,\, ,$$ where $H^k_{d_{\theta}}(M;\bC)$ and $H^{p,q}_{\bar{\p}_{\theta}}(M)$ are the $\theta$-twisted De Rham and Dolbeault cohomology groups of $M$.\end{theo}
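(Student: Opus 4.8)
The final statement is, essentially, a direct specialization of Theorems~\ref{isoparttwCD} and~\ref{teoDdRtw} to the twisting bundle $W=\eL_{-\theta}$, combined with the classical dictionary between the de Rham and Dolbeault complexes of the flat line bundle $\eL_{-\theta}$ and the Lichnerowicz--Novikov complexes $\big(\W^{\bcdot}(M;\bC),d_\theta\big)$, $\big(\W^{p,\bcdot}(M;\bC),\bar{\p}_\theta\big)$. First I would observe that, since $\theta$ is closed, $\Derc{-\theta}_X$ is a flat metric covariant derivative on $(\eL_{-\theta},h^{-\theta})$, so by the twisting Proposition of the previous subsection $\eV_\theta M=\eV M\otimes\eL_{-\theta}$ is a Chern-Dirac bundle, and its $\theta$-twisted partial Chern-Dirac operators $\Diracde\!'{}^{{}^{(L)}}_{\theta}$, $\Diracde\!''{}^{{}^{(L)}}_{\theta}$, $\Diracde\!'{}^{{}^{(R)}}_{\theta}$, $\Diracde\!''{}^{{}^{(R)}}_{\theta}$ are by definition the $W$-twisted partial Chern-Dirac operators of that subsection for $W=\eL_{-\theta}$. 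Applying Theorem~\ref{isoparttwCD} with this choice of $W$ then gives the four identities of the theorem, but with $\p_\theta,\p^*_\theta,\bar{\p}^*_\theta,\bar{\p}_\theta$ provisionally replaced by $\sqrt2$ times the $\eL_{-\theta}$-valued operators $\p^{\eL_{-\theta}},(\p^{\eL_{-\theta}})^*,(\bar{\p}^{\eL_{-\theta}})^*,\bar{\p}^{\eL_{-\theta}}$.

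The substantive step is to pass from these $\eL_{-\theta}$-valued operators to scalar operators. Let $s^{\zero}$ be the global trivializing section of $\eL_{-\theta}$ fixed in the definition of $\eV_\theta M$, for which $\Derc{-\theta}_X s^{\zero}=-\theta(X)\,s^{\zero}$, and let $j\colon\W^{p,q}(M;\bC)\to\W^{p,q}(M;\eL_{-\theta})$, $\h\mapsto\h\otimes s^{\zero}$, be the induced $\bC$-linear isomorphism. A short computation with the graded Leibniz rule gives
\[
d^{\eL_{-\theta}}(\h\otimes s^{\zero})=(d\h-\theta\wedge\h)\otimes s^{\zero}=(d_\theta\h)\otimes s^{\zero},
\]
so $j$ is an isomorphism of complexes from $\big(\W^{\bcdot}(M;\bC),d_\theta\big)$ onto $\big(\W^{\bcdot}(M;\eL_{-\theta}),d^{\eL_{-\theta}}\big)$; the closedness of $\theta$ is precisely what makes $d_\theta^2=0$, and, on bidegree components, what makes $\p_\theta=\p-\theta^{10}\wedge$ and $\bar{\p}_\theta=\bar{\p}-\theta^{01}\wedge$ the $(1,0)$- and $(0,1)$-parts of $d_\theta$. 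Splitting $j$ by bidegree identifies $\p_\theta$ with $\p^{\eL_{-\theta}}$ and $\bar{\p}_\theta$ with $\bar{\p}^{\eL_{-\theta}}$; moreover $j$ carries the $g$-induced inner products on scalar forms to the $\big(g,h^{-\theta}\big)$-induced inner products on $\eL_{-\theta}$-valued forms, so it also identifies the formal adjoints, $\p^*_\theta$ with $(\p^{\eL_{-\theta}})^*$ and $\bar{\p}^*_\theta$ with $(\bar{\p}^{\eL_{-\theta}})^*$. Composing the identities of the previous step with $j$ and recalling that $\vars^{{}^{\eL_{-\theta}}}=\vars\otimes\Id_{\eL_{-\theta}}$, so that $\vars^{{}^{\eL_{-\theta}}}(\h\otimes s^{\zero})=\vars(\h)\otimes s^{\zero}$ (cf.\ \eqref{twistedro} and the notation introduced after it), we obtain the four displayed formulas of the theorem.

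Finally, when $M$ is compact, Theorem~\ref{teoDdRtw} applied with $W=\eL_{-\theta}$ yields $\ker\!\big(\widetilde{\Dirac{}^{\!\!\!{}^{(L)}}}+\widetilde{\Dirac{}^{\!\!\!{}^{(R)}}}\big)\simeq\bigoplus_{k=0}^{2n}H^k_{d^{\eL_{-\theta}}}(M;\eL_{-\theta})$ and $\ker\widetilde{\Dirac{}^{\!\!\!{}^{(R)}}}\simeq\bigoplus_{p,q=0}^{n}H^{p,q}_{\bar{\p}^{\eL_{-\theta}}}(M;\eL_{-\theta})$; the $\theta$-twisted left and right Chern-Dirac operators coincide with the $\eL_{-\theta}$-twisted ones, and the chain isomorphism $j$ identifies $H^k_{d^{\eL_{-\theta}}}(M;\eL_{-\theta})$ with $H^k_{d_\theta}(M;\bC)$ and $H^{p,q}_{\bar{\p}^{\eL_{-\theta}}}(M;\eL_{-\theta})$ with $H^{p,q}_{\bar{\p}_\theta}(M)$, which is the claim. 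I expect the only real obstacle to lie in the middle step, namely checking that $j$ is an isometric morphism of complexes carrying $d^{\eL_{-\theta}}$, its bidegree components and their formal adjoints to the Lichnerowicz--Novikov operators --- a short but not entirely mechanical verification in which the closedness of $\theta$ and the normalization of $h^{-\theta}$ have to be used with some care; everything else is a black-box application of the results of the previous subsection.
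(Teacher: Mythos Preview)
Your proposal is correct and follows exactly the paper's own route: the paper dispatches this theorem in a single sentence by invoking Theorems~\ref{isoparttwCD} and~\ref{teoDdRtw} with $W=\eL_{-\theta}$ together with the standard identification of the $\eL_{-\theta}$-valued de Rham complex with the Lichnerowicz--Novikov complex, and your write-up merely unpacks that identification in more detail than the paper does.

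One small caution on the point you yourself flag as the delicate one. You use the same symbol $s^{\zero}$ for the trivializing section satisfying $\Derc{-\theta}_X s^{\zero}=-\theta(X)\,s^{\zero}$ (this is the constant section $1$ in the smooth trivialization, and it is the one that makes $j$ a chain map to the Lichnerowicz--Novikov complex) and for the section of unit $h^{-\theta}$-norm; but in the paper's setup $h^{-\theta}$ is normalized so that the \emph{parallel} section has norm~$1$, and for real nonzero $\theta$ the constant section $1$ does not have constant $h^{-\theta}$-norm. So your simultaneous claims that $j$ intertwines $d^{\eL_{-\theta}}$ with $d_\theta$ and that $j$ is an isometry cannot both hold for the same $s^{\zero}$. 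This does not affect the cohomological conclusions (the chain isomorphism suffices for those), and the ambiguity is already present in the paper's own discussion, but if you want the adjoint identities $\p_\theta^*,\bar\p_\theta^*$ to match literally you should be explicit about which inner product on scalar forms you are using.
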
 \bigskip

\bigskip\bigskip
\font\smallsmc = cmcsc8
\font\smalltt = cmtt8
\font\smallit = cmti8
\hbox{\parindent=0pt\parskip=0pt
\vbox{\baselineskip 9.5 pt \hsize=5truein
\obeylines
{\smallsmc
Dipartimento di Matematica e Informatica ``Ulisse Dini'', Universit$\scalefont{0.55}{\text{\Aac}}$ di Firenze
Viale Morgagni 67/A, 50134 Firenze, ITALY}
\smallskip
{\smallit E-mail adress}\/: {\smalltt francesco.pediconi@unifi.it
}
}
}

\end{document}